\theoremstyle{definition}
\newtheorem{definition}{Definition}[section]
\newtheorem{theorem}{Theorem}[section]
\newtheorem{lemma}[theorem]{Lemma}
\newtheorem{proposition}[theorem]{Proposition}
\newtheorem{corollary}[theorem]{Corollary}
\newtheorem{example}[theorem]{Example}
\newtheorem{remark}[theorem]{Remark}
\title{On Stoltenberg's quasi-uniform completion}
\author{
  Athanasios Andrikopoulos\thanks{Associate professor  (https://www.ceid.upatras.gr/webpages/faculty/aandriko/)} \\
  Dept. of Computer Engineering and Informatics\\
  University of Patras\\
  Patras, 26504, Greece \\
  \texttt{aandriko@ceid.upatras.gr} \\
  %% examples of more authors
   \And
 Ioannis Gounaridis \\
  Dept. of Computer Engineering and Informatics\\
  University of Patras\\
  Patras, 26504, Greece \\
  \texttt{igounaridis@upatras.gr} \\
  %% \AND
  %% Coauthor \\
  %% Affiliation \\
  %% Address \\
  %% \texttt{email} \\
  %% \And
  %% Coauthor \\
  %% Affiliation \\
  %% Address \\
  %% \texttt{email} \\
  %% \And
  %% Coauthor \\
  %% Affiliation \\
  %% Address \\
  %% \texttt{email} \\
}
\begin{document}
\maketitle

\begin{abstract}
In this paper, we give a new completion for quasi-uniform spaces which generalizes the completion theories of Doitchinov \cite{doi1} and Stoltenberg \cite{sto}. The presented completion theory is very well-behaved and extends the completion theory of uniform spaces in a natural way. That is, the definition of Cauchy net and the constructed completion coincide with the classical in the case of uniform spaces. The main contribution this completion theory makes is the notion of the cut of nets which generalize the idea of Doitchinov for the notion of $D$-Cauchy net \cite{and}
\end{abstract}

% keywords can be removed
\keywords{Quasi-metric \and Quasi-uniformity \and Dedekind-MacNeille completion \and Cauchy net \and Embedding, Completeness}

\section{Introduction}
The problems of completeness and completion in quasi-uniform spaces were mainly considered in \cite{and}, \cite{bru}, \cite{CH}, \cite{csa1}, \cite{doi1}, \cite{FL1}, \cite{sal}, \cite{SP}, \cite{sto}.
A satisfactory extension of the completion theory of uniform
spaces to arbitrary quasi-uniform spaces first naturally leads to Cs\'{a}sz\'{a}r's double completeness developed in the realm of syntopogenous spaces \cite{csa1}.
In this direction, Fletcher and Lindgren have introduced in \cite{FL1} the notion of bicompleteness, and they prove that any quasi-uniform space has a bicompletion, called standard bicompletion. It turns out that this concept coincides (for quasi-uniform spaces) with that of double completeness.
Since the idea underlying the bicompletion is basically symmetric, various authors have tried to construct other, possibly non-symmetric completions as it has naturally arisen from the asymmetric character of quasi-uniform spaces.
By definition, the notion of completeness of a quasi-uniform space as well as the construction of the completion depends on the choice of the definition of Cauchy net or Cauchy filter (often nets and filters lead to equivalent theories). The reason for the difficulty to develop a satisfactory non-symmetric completion theory for the class of all quasi-uniform spaces is due to the struggle to approach the notion of Cauchy net (filter) properly from the case of uniform spaces to the case of quasi-uniform spaces. More precisely, since uniform spaces belong to the class of quasi-uniform spaces, according to Doitchinov \cite{doi1}, a notion of Cauchy net in any quasi-uniform space has to be defined in such a manner that this definition provides the properties that convergent nets are Cauchy, and it agrees with the usual definition for uniform spaces. Moreover, the suggested completion must be a monotone operator with respect to inclusion and give rise to the usual uniform completion in the uniform case.
The problem of defining Cauchy nets or filters in quasi-uniform spaces
has been approached by several authors. The problem of defining Cauchy nets or filters in quasi-uniform spaces has been initially approached by Cs\'{a}sz\'{a}r \cite{csa1}, Sieber and Pervin \cite{SP} and Stoltenberg \cite{sto}.
Cs\'{a}sz\'{a}r \cite{csa1} introduced the notion of Cauchy filter in quasi-uniform
spaces and proved that every syntopogenic space can be embedded in complete space.
As it concerns Csaszar's definition of cauchyness. Isbell \cite{isb} noted that the convergent nets were not necessarily Cauchy.
Stoltenberg in \cite{sto} also gave a definition of Cauchy net in quasi-uniform spaces which generalize the definition of Kelly \cite{kel}
for Cauchy sequences in quasi-metric spaces. 
According to Stoltenberg's definition, one can find a Cauchy sequence (net) that 
is very inconvenient to regard this sequence as a potentially convergent one
by completing the space (see \cite[Example 3]{doi}, \cite{GF}.
Sieber and Pervin \cite{SP} gave a definition of Cauchy filter in a quasi- uniform space $(X,\mathcal{U})$ which admits an equivalent definition for nets: A net $(x_a)_{a\in A}$ is Cauchy in $(X,\mathcal{U})$ whenever given $U\in\mathcal{U}$ there is a point $x_{_U}\in X$ and $a_{_0}\in A$ such that
$(x_{_U},x_a)\in U$ for all $a\geq a_{_0}$, $a\in A$.
The definition of Sieber-Pervin has been used by many authors and is usually accepted as the most appropriate way of generalizing the notion of Cauchy net in uniform spaces.
According to Doitchinov, the definition of Sieber-Pervin has a serious flaw. Namely, the sequences (nets) depends not only on its terms but also on some other points which need not belong to it  \cite[Example 2]{doi}.
There are various generalizations to the notion of Cauchy sequence (net) which are based on the definitions of Cs\'{a}sz\'{a}r, Stoltenberg and Sieber and Pervin, but, up to now, none of these generalizations can give a satisfying completion theory for all quasi-uniform spaces. Thus, there exist many different notions of quasi-uniform completeness in the literature. 
More precisely, this problem has been studied in \cite{RSV}, where 7 different notions of \textquotedblleft Cauchy sequences (nets)\textquotedblright are presented.
By combining the 7 \textquotedblleft Cauchy sequences (nets)\textquotedblright with the topologies $\tau_{_{\mathcal{U}}}$, and $\tau_{_{{\mathcal{U}^{-1}}}}$, we may reach a total of 14 different definitions of \textquotedblleft complete space\textquotedblright (considering the symmetry of using the $\mathcal{U}^{-1}$ instead of $\mathcal{U}$). Doitchinov \cite{doi1} developed an interesting completion theory for quiet quasi-metric spaces.
What is interesting in Doitchinov's work is that he has considered a quasi-uniform space as a bitopological space and introduced the concept of conet of a net. 
K\"{u}nzi and Kivuvu have extended the completion theory of Doitchinov of quasi-pseudometric (quasi-uniform) spaces to arbitrary spaces, denoted
$B$-completion.
Andrikopoulos \cite{and1} has introduced a new technique, inspired by Dedekind-MacNeille completion of rational numbers. 

In this paper, we give a completion theory based on Stoltenberg's one which generalizes Doitchinov's completion theory and it satisfies all requirements posed
by him for a good completion.
The technique stands on the construction of a cut of nets, using Doitchinov's
concept of Cauchy pair of nets. In fact, by Proposition \ref{kjz}, to each $D$-Cauchy net it corresponds a set of pairs of nets-conets which lead to the notion of cut of nets, that is, a pair $(\mathcal{C},\mathcal{D})$
where $\mathcal{C}$ contains all equivalent nets of the given net and $\mathcal{D}$ contains all the conets of the members of $\mathcal{C}$.
The notion of $\mathcal{U}$-cut defined in this paper is a cut of nets $(\mathcal{A},\mathcal{B})$ where the members of $\mathcal{A}$ contains right $\mathcal{U}_{_S}$-Cauchy nets and $\mathcal{B}$ contains left $\mathcal{U}_{_S}$-Cauchy nets as they are defined by Stoltenberg.
We call the space $\mathcal{U}$-complete if each member of the first class of a $\mathcal{U}$-cut converges and we prove that each quasi-uniform space has a $\mathcal{U}$-completion.
The new completion eliminates the weaknesses of Stoltenberg's completion.

\section{Notations and definitions}
Let us recall some main notions, basic concepts, definitions and results needed in the paper (see  \cite{mun}, \cite{wil}).
A quasi-pseudometric space $(X,d)$ is a set $X$ together with a non-negative real-valued function $d: X\times X\longrightarrow \mathbb{R}$ (called a quasi-pseudometric) such that, for every $x, y, z\in X$: (i) $d(x,x)=0$;
(ii) $d(x,y)\leq d(x,z)+d(z,y)$. If $d$ satisfies the additional condition (iii) $d(x,y)=0$ implies $x=y$, then $d$ is called a quasi-metric on $X$. A quasi-pseudometric is a pseudometric provided $d(x,y)=d(y,x)$. The conjugate of a
quasi-pseudometric $d$ on $X$ is the quasi-pseudometric $d^{-1}$ given by $d^{-1}(x,y)=d(y,x)$. By $d^{\ast}$ we denote the pseudometric given by $d^{\ast}={\rm max}\{d(x,y),d^{-1}(x,y)\}$. 
Each quasi-pseudometric $d$ on $X$ induces a topology $\tau_{d}$ on $X$ which has as a base the family of $d$-balls $\{ B_{_d}(x,r): x\in X, r>0\}$ where $B_{_d}(x,r)=\{y\in X: d(x,y)<r\}$. A quasi-pseudometric space is $T_{_0}$ if its associated topology $\tau_{d}$ is $T_{_0}$. In that case axiom (i) and the $T_{_0}$-condition can be replaced by (i$^{\prime}$) $\forall x,y\in X$, $d(x,y)=d(y,x)=0 \Leftrightarrow x=y$.
In this case we say that $d$ is a $T_{_0}$-{\it quasi-pseudometric}.
If $\mathcal{P}$ is a family of quasi-pseudometrics on the set $X$, we say that $\mathcal{P}$ is a {\it quasi-gauge}. The topology $\tau(\mathcal{P})$ which has as a subbase the family 
of all balls $B(x,p,\epsilon)$ with $p\in \mathcal{P}$, $x\in X$ and $\epsilon>0$ is called the {\it topology induced on} $X$ by the quasi-gauge $\mathcal{P}$ (see \cite{rei4}).

A quasi-uniformity on a non-empty set $X$ is a filter $\mathcal{U}$ on $X\times X$
which satisfies: (i) $\Delta(X)=\{(x,x)\vert x\in X\}\subseteq U$
for each $U\in \mathcal{U}$ and (ii) given $U\in \mathcal{U}$
there exists $V\in \mathcal{U}$ such that $V\circ V\subseteq U$. The elements of the filter $\mathcal{U}$ are called {\it entourages}.
If $\mathcal{U}$ is a quasi-uniformity on a set $X$, then ${\mathcal{U}}^{-1}=\{U^{-1}\vert U\in \mathcal{U}\}$ is also a
quasi-uniformity on $X$ called the {\it conjugate} of $\mathcal{U}$.
A {\it uniformity} for $X$ is a quasi-uniformity which also satisfies the additional axiom: 
(iii) For all  $U\in \mathcal{U}$ we have $U^{-1}\in\mathcal{U}$ ($\mathcal{U}=\mathcal{U}^{-1}$). The pair $(X,\mathcal{U})$ is called a ({\it quasi}-){\it uniform space}. 
Given a quasi-uniformity $\mathcal{U}$ on $X$, $\mathcal{U}^{\star}=\mathcal{U}\bigvee \mathcal{U}^{-1}$ will denote the coarsest uniformity on $X$ which is finer than $\mathcal{U}$. If $U\in \mathcal{U}$, the entourage $U\cap U^{-1}$ of $\mathcal{U}^{\star}$ will be denoted by $U^{\star}$.
A family $\mathcal{B}$ is a base for a quasi-uniformity $\mathcal{U}$ if and only if for each $U\in \mathcal{U}$ there exists $B\in \mathcal{B}$ such that $B\subseteq U$.
The family $\mathcal{B}$ is  {\it subbase} for $\mathcal{U}$ if the family of finite intersections of members of $\mathcal{B}$ form a base for $\mathcal{U}$.
Every quasi-uniformity $\mathcal{U}$ on $X$ generates a topology $\tau(\mathcal{U})$.
A neighborhood base for each point $x\in X$ is given by $\{U(x)\vert U\in \mathcal{U}\}$ where $U(x)=\{y\in X\vert (x,y)\in U\}$.

If $(X,d)$ is a quasi-pseudometric space then $\mathcal{B}=\{U_{_{d,\epsilon}}\vert \epsilon>0\}$,
where $U_{_{d,\epsilon}}=\{(x,y)\in X\times X\vert d(x,y)<\epsilon\}$, is a base 
for a quasi-uniformity $\mathcal{U}_{_d}$ for $X$ such that $\tau_{_d}=\tau(\mathcal{U}_{_d})$.
For each quasi-uniformity $\mathcal{U}$
possessing a countable base there is a quasi-pseudometric $d_{_{\mathcal{U}}}$ such that $\tau(\mathcal{U})=\tau\!_{_{d_{_{_{\mathcal{U}}}}}}$.

A function $f$ from a quasi-uniform space $(X,\mathcal{U})$ to a quasi-uniform
space $(X,\mathcal{V})$ is {\it quasi-uniformly continuous }
if for each $V\in \mathcal{V}$ there is $U\in \mathcal{U}$ such that $(f(x),f(y))\in \mathcal{V}$ whenever $(x,y)\in U$ i.e. the set $\{(x, y)\vert (f(x),f(y))\in \mathcal{V}\}\in \mathcal{U}$. The function $f$ is a quasi-uniform isomorphism if and only if $f$ is one-to-one onto $Y$ and $f^{-1}$ is quasi-uniformly continuous. A quasi-uniform space $(X,\mathcal{U})$ can
be embedded in a quasi-uniform space $(X,\mathcal{V})$ when there exists a
quasi-uniform isomorphism from $(X,\mathcal{U})$ onto a subspace of $(Y,\mathcal{V})$.
A sequence $(x_{_n})_{_{n\in\mathbb{N}}}$ in a quasi-pseudometric space $(X,d)$ is called $d_{_K}$-{\it Cauchy} (from Kelly \cite[Definition 2.10]{kel}) if for each $\epsilon>0$ there is $k\in\mathbb{N}$ such that $d(x_{_n},x_{_m})<\epsilon$ for each $n\geq m\geq k$. 
This is the notion of Cauchy sequence called in \cite[Definition 1.iv]{RSV} a {\it right} K-{\it Cauchy sequence}.
Similarly, a sequence $(x_{_n})_{_{n\in\mathbb{N}}}$ 
is {\it left} K-{\it Cauchy} if for each $\epsilon>0$ there is $k\in\mathbb{N}$ such that $d(x_{_m},x_{_n})<\epsilon$ for each $n\geq m\geq k$ (see \cite[Definition 1.v]{RSV}). The space $(X,d)$ is said to be {\it right} (resp. {\it left}) K-{\it sequentially complete} if each {\it right} (resp. {\it left}) K-{\it Cauchy} sequence converges in $X$. According to (\cite[Definition 3]{and1})
a sequence $(x_{_n})_{_{n\in \mathbb{N}}}$ on $X$ is {\it right} (resp. {\it left}) $d$-{\it cofinal to} a sequence $(x_{_m})_{_{m\in \mathbb{N}}}$ 
on $X$, if for each $\varepsilon >0$ there exists $n_{_\varepsilon}\in \mathbb{N}$ satisfying the following property: for each $n\geq n_{_\varepsilon}$ there exists $m_{_n}\in \mathbb{N}$ such that $d(x_{_m},x_{_{n}})<\varepsilon$ (resp. $d(x_{_{n}},x_{_m})<\varepsilon$) whenever $m\geq m_{_n}$. The sequences $(x_{_n})_{_{n\in \mathbb{N}}}$ and $(x_{_m})_{_{m\in \mathbb{N}}}$
are {\it right} (resp. {\it left}) $d$-{\it cofinal} if $(x_{_n})_{_{n\in \mathbb{N}}}$ is right (resp. left) $d$-cofinal to $(x_{_m})_{_{m\in \mathbb{N}}}$ and vice versa. According to(\cite[Definition 1]{doi1}) a sequence $(y_{_m})_{_{m\in \mathbb{N}}}$ is called a {\it cosequence} to
$(x_n)_{n\in \mathbb{N}}$, if for any $\varepsilon>0$ there are $n_{_\varepsilon}, m_{_\varepsilon} \in \mathbb{N}$ such that $d(y_{_m},x_n)<\varepsilon$
when $n\geq n_{_\varepsilon}$, $m\geq m_{_\varepsilon}$.
In this case, we write $d(y_{_m},x_n)\rightarrow 0$ or $\lim\limits_{m,n}d(y_{_m},x_n)=0$.
Generally speaking, when two sequences $(x_{_n})_{_{n\in \mathbb{N}}}$ and $(y_{_m})_{_{m\in \mathbb{N}}}$ are given in a quasi-pseudometric space $(X,d)$ we will write $\displaystyle\lim_{n,m}d(y_{_m},x_{_n})=r$ if for any $\varepsilon >0$ there is an $N_{_\varepsilon}$ such that $|d(y_{_m},x_{_n})-r|<\varepsilon$
when $m, n >N_{_\epsilon}$.
We call $\kappa$-{\it cut} {\rm (of sequences) in $X$ (\cite[Definition 8]{and1})} an ordered pair $\xi=(\mathcal{A},{\mathcal{B}})$ of families of right $K$-Cauchy sequences and left $K$-Cauchy cosequences, respectively, with the following propositionerties: (i)
{\rm For any $(x_{_n})_{_{n\in \mathbb{N}}}\in \mathcal{A}$ and any
$(x_{_m}){_{m\in N}}\in {\mathcal{B}}$ there holds $\lim\limits_{m,n}d(x_{_m},x_n)=0$;} (ii) {\rm Any two members of the family $\mathcal{A}$ (resp. $\mathcal{B}$) are right (resp. left) $d$-cofinal;}
{\rm (iii) The classes are maximal with respect to set inclusion}.
We call the member $\mathcal{A}$ (resp. ${\mathcal{B}})$ {\it first} (resp. {\it second}) class of $\xi$.
A $\kappa$-{\it Cauchy sequence} is a right $K$-Cauchy sequence which is member of the first class of a $\kappa$-cut (see \cite[Definition 11]{and1}).

A {\it directed set} is a nonempty set $A$ together with a reflexive and transitive binary relation $\leq$ (that is, a preorder), with the additional propositionerty that for any $x$ and $y$ in $A$ there must exist $z$ in $A$ with $x\leq z$ and $y\leq z$. 
Directed sets are a generalization of nonempty totally ordered sets. 
That is, all totally ordered sets are directed sets. 
A {\it net} in a topological space $X$ is a function $\delta: A\rightarrow X$, 
where $A$ is some directed set.  The point $\delta(a)$ is usually denoted $x_{_a}$
and the net is denoted by $(x_{_a})_{_{a\in A}}$. 
A function $\varphi: D\rightarrow A$ is {\it cofinal in }  $A$ if 
for each $a\in A$, there exists some $\mu\in D$ such that $a\leq \varphi(\mu)$. 
A {\it subnet} of a net $\delta: A\rightarrow X$ is the composition
$\delta\circ\varphi$, where $\varphi: D\rightarrow A$ is an increasing
cofinal function from a directed set $D$ to $A$. That is:
(i) $\varphi(\mu_{_1})\leq \varphi(\mu_{_2})$ whenever $\mu_{_1}\leq \mu_{_2}$ ($\varphi$ is increasing);
(ii) $\varphi$ is cofinal in $A$.
For each $\mu\in M$, the point $\delta\circ \varphi(\mu)$ is often written $x_{_{a_{_\mu}}}$, and we usually speak of ``the subnet $(x_{_{a_{_\mu}}})_{_{\mu\in M}}$ of $(x_{_a})_{_{a\in A}}$."
A net $(x_{_a})_{_{a\in A}}$
in quasi-uniform space $(X,\mathcal{U})$ is said to be {\it convergent} to $x\in X$ if for every $U\in \mathcal{U}$ there exists $
a_{_U}\in \mathbb{D}$ such that $(x,x_{_a})\in U$ for each $a\geq a_{_U}$.
The definition of net generalizes a key result about subsequences:
A net $(x_{_a})_{_{a\in A}}$ converges to $x$ if and only if every subnet of $(x_{_a})_{_{a\in A}}$ converges to $x$.

Let $X$ be a well-ordered set and let $R(x_{_\lambda})$, $\lambda\in \Lambda$ be a propositionosition with domain $X$.
The {\it Principle of Transfinite Induction} asserts that if $\displaystyle\bigcup_{_{\lambda<\mu}}R(x_{_\lambda})$ implies $R(x_{_\mu})$, for all $\mu \in \Lambda$, then in fact $R(x_{_\lambda})$ holds for all $\lambda\in\Lambda$.

Let $X_{_a}$ be a set, for each $a\in A$. The {\it Cartesian product} of the sets $X_{_a}$ is the set 

\begin{center}
$\displaystyle\prod_{a\in A}X_{_a}=\{x: A\rightarrow \displaystyle\bigcup_{a\in A}X_{_a}\vert \ x(a)\in X_{_a},$
for each $a\in A\}$.
\end{center}

The value of $x\in \displaystyle\prod_{a\in A}$ at $a$ is usually denoted $x_{_a}$, rather than $x(a)$, and $x_{_a}$ is referred to as the {\it ath coordinate} of $x$. The space $X_{_a}$ in the {\it ath factor space}.
For each $\gamma\in A$ the map $\pi_{_\gamma} : \displaystyle\prod_{a\in A}X_{_a}\rightarrow X_{_\gamma}$, defined by  $\pi_{_\gamma}(x)=x_{_\gamma}$,is called the {\it projection map} of $\displaystyle\prod_{a\in A}X_{_a}$ on $X_{_\gamma}$, or the $\gamma$th {\it projection map}. The Axiom of choice  ensure that the Cartesian product of a non-empty collection of non-empty sets is non-empty.

Let $(X_{_i},{\cal{U}}_{_i})_{_{i\in I}}$ be a family of quasi-uniform spaces. Let  $\displaystyle\prod_{i\in A}X_{_i}$ be the set-theoretic product of the family $(X_{_i})_{_{i\in I}}$ and let $\pi_{_j}: \displaystyle\prod_{i\in A}X_{_i}\longrightarrow X_{_j}$ ($j\in I$) be the projection onto $(X_{_j},{\cal{U}}_{_j})$.
Then the coarsest quasi-uniformity on $\displaystyle\prod_{i\in A}X_{_i}$ that makes all projections uniformly continuous is called the product quasi-uniformity. It induces the product topology of the topological spaces $(X_{_i},{\cal{U}}_{_i})$.

\section{The new completion}
Throughout the paper $(X,\mathcal{U})$ will be an arbitrary quasi-uniform space 
and $(X,d)$ will be an arbitrary quasi-pseudometric space, except the cases when it is explicitly stated that the space is $T_{_0}$.

Stoltenberg \cite{sto} has given the following definitions. The index $S$ in our symbolisms is devoted to Stoltenberg.

\begin{definition}\label{a0}{\rm (\cite[Definition 2.1]{sto}).
A net $(x_{_a})_{_{a\in A}}$ in a quasi-unifrom space $(X,\mathcal{U})$ is called 
{\it right} $\mathcal{U}_{_S}$-{\it Cauchy} ({\it left} $\mathcal{U}_{_S}$-{\it Cauchy}) if for each $U\in\mathcal{U}$ there is $a_{_U}\in A$ such that $(x_{_\beta},x_{_\alpha})\in U$ (resp. $(x_{_\alpha},x_{_\beta})\in U$)
whenever $\alpha\geq a_{_U}$, $\beta\geq  a_{_U}$, $a\ngeq \beta$ and $a, \beta\in A$.} \end{definition}

\begin{definition} {\rm (\cite[Definition 2.2]{sto}). 
A quasi-uniform space $(X,\mathcal{U})$ is $\mathcal{U}_{_S}$-{\it complete}
if and only if each right $\mathcal{U}_{_S}$-Cauchy net 
converges in $X$ with respect to $\tau_{_{\mathcal{U}}}$.}
\end{definition}

\begin{definition}\label{a1}{\rm 
A net $(x_{_a})_{_{a\in A}}$ in a quasi-pseudometric space $(X,d)$ is called {\it right} (resp. {\it left}) $d_{_S}$-{\it Cauchy}
if for each $\epsilon>0$ there is $a_{_\epsilon}\in A$ such that $d(x_{_\beta},x_{_\alpha})<\epsilon$ (resp. $d(x_{_\alpha},x_{_\beta})<\epsilon$) whenever $\alpha\geq a_{_\epsilon}$,
$\beta\geq  a_{_\epsilon}$, $a\ngeq \beta$ and $a, \beta\in A$.
Without loss of generality, we may suppose that for $\varepsilon^\prime\leq \varepsilon$, it is $a_{_{\varepsilon^{\prime}}}\geq a_{_\varepsilon}$ (resp. 
$\beta_{_{\varepsilon^{\prime}}}\geq \beta_{_\varepsilon}$).
We say that a quasi-pseudometric space $(X,d)$ is $d_{_S}$-{\it complete} if and only if every right $d_{_S}$-Cauchy net converges to a point in $X$ with respect to $\tau_{_d}$.
Similarly, we say that $(X,d)$ is $\mathcal{U}_{_d}$-{\it complete}
if and only if every $\mathcal{U}_{_d}$-Cauchy net converges to a point in $X$ with respect to $\tau_{_{\mathcal{U}_{_d}}}=\tau_{_d}$.}
\end{definition}

In case of sequences, Definition \ref{a1} coincides with the definition of the notions of right $K$-Cauchy sequence, left $K$-Cauchy sequence, right $K$-sequentially complete and left $K$-sequentially complete quasi-pseudometric space, respectively.

\begin{proposition}\label{a2}{\rm (see \cite[Page 229]{sto}). Let $(X,d)$ be a quasi-pseudometric space. A net $(x_{_a})_{_{a\in A}}$ 
in $(X,d)$ is a right $d_{_S}$-Cauchy net if and only if $(x_{_a})_{_{a\in A}}$ is a right $(\mathcal{U}_{_d})_{_S}$-Cauchy net in $(X,\mathcal{U}_d)$. Simillarly,
the space $(X,d)$ is $d_{_S}$-complete if and only if $(X,\mathcal{U}_{_d})$ is
$\mathcal{U}_{_d}$-complete.}
\end{proposition}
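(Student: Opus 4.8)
The statement is essentially a translation lemma: it says that the notion of ``right $d_{_S}$-Cauchy net'' and the notion of ``right $(\mathcal{U}_{_d})_{_S}$-Cauchy net'' coincide when the quasi-uniformity is the one canonically induced by the quasi-pseudometric, and that the corresponding completeness notions agree. The plan is to unwind both definitions and exploit the fact that $\mathcal{B}=\{U_{_{d,\epsilon}}\mid \epsilon>0\}$ is a \emph{base} for $\mathcal{U}_{_d}$, where $U_{_{d,\epsilon}}=\{(x,y)\mid d(x,y)<\epsilon\}$, together with the fact that $\tau_{_d}=\tau(\mathcal{U}_{_d})$.

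First I would prove the net equivalence. For the forward direction, assume $(x_{_a})_{_{a\in A}}$ is right $d_{_S}$-Cauchy and let $U\in\mathcal{U}_{_d}$ be arbitrary; since $\mathcal{B}$ is a base, pick $\epsilon>0$ with $U_{_{d,\epsilon}}\subseteq U$, then take $a_{_U}:=a_{_\epsilon}$ from Definition~\ref{a1}; for $\alpha,\beta\geq a_{_U}$ with $a\ngeq\beta$ one gets $d(x_{_\beta},x_{_\alpha})<\epsilon$, hence $(x_{_\beta},x_{_\alpha})\in U_{_{d,\epsilon}}\subseteq U$, which is exactly the condition of Definition~\ref{a0}. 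For the converse, assume $(x_{_a})_{_{a\in A}}$ is right $(\mathcal{U}_{_d})_{_S}$-Cauchy and let $\epsilon>0$; apply the hypothesis to the entourage $U_{_{d,\epsilon}}\in\mathcal{U}_{_d}$ to obtain $a_{_\epsilon}:=a_{_{U_{_{d,\epsilon}}}}$, and read off $d(x_{_\beta},x_{_\alpha})<\epsilon$ under the same index constraints. Both directions are purely a matter of matching quantifiers, with the base property supplying the only non-trivial ingredient.

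Next I would handle the completeness claim. The key auxiliary observation is that for a net $(x_{_a})_{_{a\in A}}$ and a point $x\in X$, convergence in $\tau_{_d}$ is the same as convergence in $\tau(\mathcal{U}_{_d})$, since these two topologies are literally equal (stated in the excerpt); concretely, $(x_{_a})\to x$ in $\tau_{_d}$ iff for every $\epsilon>0$ eventually $d(x,x_{_a})<\epsilon$, iff for every $U\in\mathcal{U}_{_d}$ eventually $x_{_a}\in U(x)$, iff $(x_{_a})\to x$ with respect to $\tau_{_{\mathcal{U}_{_d}}}$. Combining this with the net equivalence just established: $(X,d)$ is $d_{_S}$-complete iff every right $d_{_S}$-Cauchy net $\tau_{_d}$-converges iff (by the equivalence of the Cauchy notions and the equality of topologies) every right $(\mathcal{U}_{_d})_{_S}$-Cauchy net $\tau_{_{\mathcal{U}_{_d}}}$-converges iff $(X,\mathcal{U}_{_d})$ is $\mathcal{U}_{_d}$-complete.

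I do not expect any serious obstacle here; the statement is a routine but necessary bookkeeping lemma. The only point requiring a modicum of care is to make sure the index side-conditions ``$\alpha\geq a$, $\beta\geq a$, $a\ngeq\beta$, $a,\beta\in A$'' are carried verbatim through both directions so that one genuinely produces a right $\mathcal{U}_{_S}$-Cauchy net in the sense of Definition~\ref{a0} rather than, say, a Sieber–Pervin-type Cauchy net; the monotonicity remark on the $a_{_\epsilon}$'s in Definition~\ref{a1} is not needed for this lemma but does no harm. If one wished to be fully explicit, one could also note that the same argument applied to $d^{-1}$ and $\mathcal{U}_{_d}^{-1}=\mathcal{U}_{_{d^{-1}}}$ yields the analogous equivalence for left $d_{_S}$-Cauchy nets, which is implicit in the ``Similarly'' of the statement.
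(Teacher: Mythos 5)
Your proposal is correct and follows exactly the paper's route: the paper's own proof simply observes that $\{U_{_{d,\epsilon}}\mid\epsilon>0\}$ is a base for $\mathcal{U}_{_d}$ and declares the equivalence an immediate consequence of Definitions \ref{a0}--\ref{a1}, which is precisely the quantifier-matching argument (plus $\tau_{_d}=\tau(\mathcal{U}_{_d})$ for the completeness half) that you spell out in detail.
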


\begin{proof} By definition, we have that
each quasi-pseudometric $d$ on $X$ generates a quasi-uniformity $\mathcal{U}_{_d}$
with base $\{\{(x,y)\in X\times X\vert d(x,y)<\epsilon\}\vert\epsilon>0\}$. Therefore, the implication of the Proposition is an immediate consequence of the Definitions \ref{a0}-\ref{a1}.
\end{proof}

\begin{definition}\label{321} {\rm (\cite[Definition 1]{doi1}, \cite{doi}). Let $(X,\cal{U})$ (resp. $(X,d)$) be a quasi-uniform space (quasi-pseudometric space) and let $(x_{_a})_{_{a\in A}}$, $(y_{_\beta})_{_{\beta\in B}}$ be two nets in $(X,\cal{U})$ (resp. $(X,d)$). The net $(y_{_\beta})_{_{\beta\in B}}$ is 
called a {\it conet} of $(x_{_a})_{_{a\in A}}$, if for any $U\in \cal{U}$ (resp. $\epsilon > 0$) there are $a_{_U}\in A$ and $\beta_{_U}\in B$ (resp. $a_{_\epsilon}\in A$ and $\beta_{_\epsilon}\in B$) such that $(y_{_\beta},x_{_a})\in U$ (resp. $d(y_{_\beta},x_{_a})<\epsilon$)
whenever $a\geq a_{_U}$, $\beta\geq \beta_{_U}$ (resp. $a\geq a_{_\epsilon}$, $\beta\geq \beta_{_\epsilon}$) and $a, \beta \in A$.}
\end{definition}

\begin{definition}{\rm (\cite[Definition 3]{and1} ).
A net $(x_{_a})_{_{a\in A}}$ in a quasi-pseudometric space $(X,d)$ is called
{\it right} (resp. {\it left}) $d$-{\it cofinal to} a net $(x_{_\beta})_{_{\beta\in B}}$ on $X$, if for each $\epsilon>0$ there exists $a_{_\epsilon}\in A$ satisfying the following property: 
for each $a\geq a_{_\epsilon}$ there exists $\beta_{_a}\in B$ such that $d(x_{_\beta},x_{_{a}})<\epsilon$ 
(resp. $d(x_{_{a}},x_{_\beta})<\epsilon$) whenever $\beta\geq \beta_{_a}$. The nets $(x_{_a})_{_{a\in A}}$ and $(x_{_\beta})_{_{\beta\in B}}$
are {\it right} (resp. {\it left}) $d$-{\it cofinal} if $(x_{_a})_{_{a\in A}}$
is right (resp. left) $d$-cofinal to $(x_{_\beta})_{_{\beta\in B}}$ and vice versa.}
\end{definition}

\begin{definition}\label{ewq} {\rm Let $(X,d)$ be a quasi-pseudometric space. We call $\delta$-cut in $X$ an ordered pair $\xi=(\mathcal{A}_{_\xi},\mathcal{B}_{_\xi})$
of families of right $d_{_S}$-Cauchy nets and left $d_{_S}$-Cauchy nets
respectively, with the following properties:

(i) Any $(x_{_a})_{_{a\in A}}\in\mathcal{A}_{_\xi}$ has as conet any $(y_{_\beta})_{_{\beta\in B}}\in\mathcal{B}_{_\xi}$;

(ii) Any two members of the family $\mathcal{A}_{_\xi}$ (resp. $\mathcal{B}_{_\xi}$) are right (resp. left) $d$-cofinal.

(iii) The classes $\mathcal{A}_{_\xi}$ and $\mathcal{B}_{_\xi}$ are maximal with respect to set inclusion.}
\end{definition}

\par\noindent
\begin{definition}\label{a6} {\rm To every $x\in X$ we correspond the $d$-cut
$\phi(x)=(\mathcal{A}_{_{\phi(x)}},\mathcal{B}_{_{\phi(x)}})$ satisfying the requirements (i)-(iii) of Definition \ref{ewq} with the additional condition (iv): The net $(x)=x, x, x, ...$ itself, belongs to both of the classes.}
\end{definition}

The notion of $\delta$-cut of nets generalizes the notion
of $\kappa$-cut of sequences (see \cite[Definition 11]{and1}).

\begin{remark}\label{p12}{\rm In fact, in the previous definition the members of $\mathcal{A}_{_{\phi(x)}}$ converge to $x$ with respect to
$\tau_{_{d}}$ and the members of $\mathcal{B}_{_{\phi(x)}}$ converge to $x$ with respect to $\tau_{_{d^{-1}}}$.}
\end{remark}

Let $(X,d)$ be a quasi-pseudometric space and let $\widehat{X}$ denotes the set of all $d$-cuts in $(X,d)$. Throughtout the paper, for every $\xi\in \widehat{X}$,
$\mathcal{A}_{_{\xi}}, \mathcal{B}_{_{\xi}}$ denote the two classes of  $\xi$. 
In this case, we write $\xi=(\mathcal{A}_{_{\xi}},\mathcal{B}_{_{\xi}})$.

\begin{remark}\label{a600} {\rm If the space $(X,d)$ is $T_{_0}$, then
the function $\phi$ defined above is an injective function (one-to-one) of $X$ into $\widehat{X}$. Indeed, let $x, y\in X$ be such that $\phi(x)=\phi(y)$. Then, 
$(x), (y) \in \mathcal{A}_{_{\phi(x)}}\cap \mathcal{B}_{_{\phi(x)}}\cap \mathcal{A}_{_{\phi(y)}}\cap \mathcal{A}_{_{\phi(y)}}$. Thus,
$d^{\ast}(x,y)=0$ which implies that $x=y$.
Generally, each quasi-pseudometric space $(X,d)$ defines a $T_{_0}$ quasi-pseudometric space $(X^{\ast},d^{\ast})$.
Indeed, let $R$ be the equivalence relation in $X$ defined by: $xRy$ if and only if $d(x,y)=0=d(y,x)$. 
Let also $X^{\ast}=X/R=\{[x]\ \vert \ x\in X\}=\{\{z\in X \vert zRx\}\ \vert x\in X\}$ be the quotient space (the set of equivalence classes). It is convenient to introduce the function $\pi: X\rightarrow X^{\ast}$ defined by $\pi(x)=\{[y] \vert\ x\in [y]\}$.
Since each $x\in X$ is contained in exactly one equivalence class we have $\pi(x)=[x]$, that is, the function $x\rightarrow [x]$ is well defined.
In order to ensure that the projection map be continuous we put an obligation on 
the topology we assign to $X^{\ast}$: If a set $G$ is open in $X^{\ast}$ then $\pi^{-1}(G)$ is open in $X$. We define the {\it quotient topology} on $X^{\ast}$ by letting all sets $G$ that pass this test be admitted. On other words, a set $G$ is open in $X^{\ast}$ if and only if $\pi^{-1}(G)$ is open in $X$.
The quotient topology on $X^{\ast}$ is the finest topology on $X^{\ast}$ for which the projection map $\pi$ is continuous. Let $d^{\ast}: X^{\ast}\times X^{\ast}\rightarrow \mathbb{R}$ be a function defined by $d^{\ast}([x], [y])=d(x,y)$. Then, it is easy to check that $d^{\ast}$ is  
a $T_{_0}$ quasi-pseudometric in $X^{\ast}$ that yields the quotient topology in $X^{\ast}$.}
\end{remark}

\par\noindent
\begin{definition}\label{a3}{\rm Let $(X,d)$ be a quasi-pseudometric space.
We call $\delta$-{\it Cauchy net} any right $d_{_S}$-Cauchy net member of the first class of a $\delta$-cut.
The space $(X,d)$ is $\delta$-{\it complete} if and only if each $\delta$-Cauchy
net converges in $X$.}
\end{definition}

\begin{definition}{\rm A net $(x_{_a})_{_{a\in A}}$ in a quasi-uniform space $(X,
\mathcal{U})$ is called {\it right} (resp. {\it left}) $\mathcal{U}$-{\it cofinal to} a net $(x_{_\beta})_{_{\beta\in B}}$ 
on $X$, if for each $U\in \mathcal{U}$ there exists $a_{_U}\in A$ satisfying the following property: 
for each $a\geq a_{_U}$ there exists $\beta_{_a}\in B$ such that $(x_{_\beta},x_{_{a}})\in U$ (resp. $(x_{_{a}},x_{_\beta})\in U$)
whenever $\beta\geq \beta_{_a}$. The nets $(x_{_a})_{_{a\in A}}$ and
$(x_{_\beta})_{_{\beta\in B}}$ are {\it right} (resp. {\it left}) $\mathcal{U}$-{\it cofinal} if $(x_{_a})_{_{a\in A}}$
is right (resp. left) $\mathcal{U}$-cofinal to $(x_{_\beta})_{_{\beta\in B}}$ and vice versa.}
\end{definition}

\begin{definition}\label{ewq1} {\rm Let $(X,\mathcal{U})$ be a quasi-uniform space. We call $\mathcal{U}$-cut in $X$ an ordered pair $\xi=(\mathcal{A}_{_\xi},\mathcal{B}_{_\xi})$ of families of right $\mathcal{U}_{_S}$-Cauchy nets and left $\mathcal{U}_{_S}$-Cauchy nets
respectively, with the following properties:

(i) Any $(x_{_a})_{_{a\in A}}\in\mathcal{A}_{_\xi}$ has as conet any $(y_{_\beta})_{_{\beta\in B}}\in\mathcal{B}_{_\xi}$;

(ii) Any two members of the family $\mathcal{A}_{_\xi}$ (resp. $\mathcal{B}_{_\xi}$) are right (resp. left) $\mathcal{U}$-cofinal.

(iii) The classes $\mathcal{A}_{_\xi}$ and $\mathcal{B}_{_\xi}$ are maximal with respect to set inclusion.}
\end{definition}

\par\noindent
\begin{definition}\label{da6} {\rm To every $x\in X$ we correspond the $\mathcal{U}$-cut
$\phi(x)=(\mathcal{A}_{_{\phi(x)}},\mathcal{B}_{_{\phi(x)}})$ satisfying the requirements (i)-(iii) of Definition \ref{ewq}
with the additional condition (iv): 
The members of $\mathcal{A}_{_{\phi(x)}}$ converge to $x$ with respect to
$\tau_{_{\mathcal{U}}}$
and the members of $\mathcal{B}_{_{\phi(x)}}$ converge to $x$ with respect to
$\tau_{_{\mathcal{U}^{-1}}}$.}
\end{definition}

According to Definition \ref{da6},
the net $(x)=x, x, x, ...$ itself, belongs to both of the classes $\mathcal{A}_{_{\phi(x)}}$ and
$\mathcal{B}_{_{\phi(x)}}$.

\begin{definition}\label{a13}{\rm Let $(X,\mathcal{U})$ be a quasi-uniform space.
We call $\mathcal{U}$-{\it Cauchy net} any right $\mathcal{U}_{_S}$-Cauchy net
member of the first class of a $\mathcal{U}$-cut.
The space $(X,\mathcal{U})$ is $\mathcal{U}$-{\it complete}
if and only if each $\mathcal{U}$-Cauchy
net converges in $X$.}
\end{definition}

Two $\delta$-Cauchy (resp. $\mathcal{U}$-Cauchy) nets $(x_{_a})_{_{a\in A}}$ and $(x_{_\beta})_{_{\beta\in B}}$ in a
quasi-pseudometric space $(X,d)$ (resp. quasi-uniform space $(X,\mathcal{U})$) are called $\delta$-{\it equivalent} (resp. $\mathcal{U}$-{\it equivalent}) if every conet to 
$(x_{_a})_{_{a\in A}}$
is a conet to $(x_{_\beta})_{_{\beta\in B}}$
and vice versa.

It is easy to see that $\delta$-equivalence (resp. $\mathcal{U}$-equivalence) defines an equivalence relation on $(X,d)$ (resp. $(X,\mathcal{U})$).
Hence, $\mathcal{A}$ is the equivalence class of 
the $\delta$-Cauchy nets (resp. $\mathcal{U}$-Cauchy nets)
that are considered to be equivalent by this equivalence relation. 

In metric (resp. uniform) spaces the classes $\mathcal{A}$ and $\mathcal{B}$ coincide with 
a well known equivalent classes of Cauchy nets.

\begin{proposition}\label{a8} {\rm Let $(x_{_a})_{_{a\in A}}$ be a right
(resp. left) $d_{_S}$-Cauchy net in a quasi-pseudometric space
$(X,d)$ with a subnet $(x_{_{a_{_\gamma}}})_{_{\gamma\in \Gamma}}$. Then, 
$(x_{_a})_{_{a\in A}}$
and
$(x_{_{a_{_\gamma}}})_{_{\gamma\in \Gamma}}$ are right (resp. left) $d$-cofinal.}
\end{proposition}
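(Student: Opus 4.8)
The plan is to unwind the (asymmetric) definitions of right and left $d$-cofinality and to verify the two required directions separately: that $(x_{_a})_{_{a\in A}}$ is right $d$-cofinal to its subnet $(x_{_{a_{_\gamma}}})_{_{\gamma\in\Gamma}}$, and conversely that the subnet is right $d$-cofinal to $(x_{_a})_{_{a\in A}}$. Write $\varphi: \Gamma\rightarrow A$ for the increasing cofinal map defining the subnet, so that $a_{_\gamma}=\varphi(\gamma)$, and recall (cf. Definition \ref{a1}) that, as $(x_{_a})_{_{a\in A}}$ is right $d_{_S}$-Cauchy, for each $\epsilon>0$ there is $a_{_\epsilon}\in A$ with $d(x_{_\beta},x_{_\alpha})<\epsilon$ whenever $\beta\geq\alpha\geq a_{_\epsilon}$. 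I will carry out only the right case; the left case follows word for word by interchanging the two arguments of $d$ throughout and replacing \emph{right} by \emph{left} in \emph{$d_{_S}$-Cauchy} and \emph{$d$-cofinal}.

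\emph{Step 1: $(x_{_a})_{_{a\in A}}$ is right $d$-cofinal to $(x_{_{a_{_\gamma}}})_{_{\gamma\in\Gamma}}$.} Fix $\epsilon>0$ and take $a_{_\epsilon}$ as above; I claim this same $a_{_\epsilon}$ works as the required witness index. Given any $a\geq a_{_\epsilon}$, cofinality of $\varphi$ produces $\gamma_{_a}\in\Gamma$ with $\varphi(\gamma_{_a})\geq a$. Then for every $\gamma\geq\gamma_{_a}$, monotonicity of $\varphi$ gives $a_{_\gamma}=\varphi(\gamma)\geq\varphi(\gamma_{_a})\geq a\geq a_{_\epsilon}$, so $\beta:=a_{_\gamma}$ and $\alpha:=a$ satisfy $\beta\geq\alpha\geq a_{_\epsilon}$ and hence $d(x_{_{a_{_\gamma}}},x_{_a})<\epsilon$. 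This is exactly the defining condition for $(x_{_a})_{_{a\in A}}$ to be right $d$-cofinal to the subnet.

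\emph{Step 2: $(x_{_{a_{_\gamma}}})_{_{\gamma\in\Gamma}}$ is right $d$-cofinal to $(x_{_a})_{_{a\in A}}$.} Fix $\epsilon>0$ and take $a_{_\epsilon}$ as above. Cofinality of $\varphi$ produces $\gamma_{_\epsilon}\in\Gamma$ with $\varphi(\gamma_{_\epsilon})\geq a_{_\epsilon}$; I claim $\gamma_{_\epsilon}$ works as the required witness index. For any $\gamma\geq\gamma_{_\epsilon}$, monotonicity gives $a_{_\gamma}=\varphi(\gamma)\geq\varphi(\gamma_{_\epsilon})\geq a_{_\epsilon}$, and I take the associated index in $A$ to be $a_{_\gamma}$ itself. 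Then for every $a\geq a_{_\gamma}$ one has $a\geq a_{_\gamma}\geq a_{_\epsilon}$, so $\beta:=a$ and $\alpha:=a_{_\gamma}$ satisfy $\beta\geq\alpha\geq a_{_\epsilon}$ and hence $d(x_{_a},x_{_{a_{_\gamma}}})<\epsilon$. This is precisely the defining condition for the subnet to be right $d$-cofinal to $(x_{_a})_{_{a\in A}}$, and together with Step 1 it proves the Proposition.

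The argument is short and essentially computational, so there is no serious obstacle; the only points that require attention are bookkeeping. First, both \emph{right $d_{_S}$-Cauchy} and \emph{right $d$-cofinal} are asymmetric, so one must keep track of which term occupies which slot of $d$ at each step — in Step 1 the subnet term is the first argument of $d$, in Step 2 it is the second — since an unnoticed interchange would prove nothing. Second, the argument genuinely uses both defining properties of the reindexing map: cofinality of $\varphi$ to get past the threshold $a_{_\epsilon}$ (and past a prescribed $a$), and monotonicity of $\varphi$ to propagate the inequality $a_{_\gamma}\geq a$ from $\gamma_{_a}$ to all larger $\gamma\in\Gamma$; a merely cofinal, non-monotone reindexing would break this proof.
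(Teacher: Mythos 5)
Your proof is correct and takes essentially the same route as the paper's: both directions of right $d$-cofinality are checked directly, with the same witnesses (the Cauchy threshold $a_{\epsilon}$, an index $\gamma_{a}$ obtained from cofinality of $\varphi$, and $a_{\gamma}$ itself as the threshold in $A$), monotonicity of $\varphi$ being used exactly where the paper uses it, and the left case dismissed as symmetric just as in the paper. The only cosmetic difference is that you restate the right $d_{S}$-Cauchy property as ``$d(x_{\beta},x_{\alpha})<\epsilon$ whenever $\beta\geq\alpha\geq a_{\epsilon}$'' whereas Definition \ref{a1} phrases the hypothesis as $\alpha\ngeq\beta$; on the comparable pairs your argument actually uses, the two formulations agree (the case $\alpha=\beta$ being trivial since $d(x_{\alpha},x_{\alpha})=0$), which is the same identification the paper's own proof makes when it writes $a>a_{\gamma}$ ($a_{\gamma}\ngeq a$).
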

\begin{proof}
Let $(x_{_a})_{_{a\in A}}$ be a right $d_{_S}$-Cauchy net in $(X,d)$ and $(x_{_{a_{_\gamma}}})_{_{\gamma\in \Gamma}}$ be a
subnet of it.
Let $\epsilon>0$ be given. Then, there exists $a_{_\epsilon}\in A$ such that for each $a, a^{\prime}\in A$
with
$a\geq a_{_\epsilon}$, $a^\prime\geq a_{_\epsilon}$ and $a^{\prime}\ngeq a$, we have $d(x_{_a},x_{_{a^{\prime}}})<\epsilon$.
Let $a_{_\gamma}>a_{_\epsilon}$ for some $\gamma\in\Gamma$ and $a_{_{a_{_{\gamma}}}}=a_{_\gamma}$. Then, for each $a> a_{_\gamma}$ ($a_{_\gamma}\ngeq a$) we have $d(x_{_a},x_{_{a_{_\gamma}}})<\epsilon$.
Hence, $(x_{_{a_{_\gamma}}})_{_{\gamma\in \Gamma}}$ is right $d$-cofinal to 
$(x_{_a})_{_{a\in A}}$.
On the other hand, let $a> a_{_\epsilon}$ for some $a\in A$. Let also 
$(a_{_\gamma})_{_a}=a_{_\gamma}$
for some $a_{_\gamma}>a$, $\gamma\in \Gamma$. Then, for each 
$\gamma^{\prime}>\gamma$
we have $d(x_{_{a_{_{\gamma^{\prime}}}}},x_{_a})<\epsilon$ which implies that 
$(x_{_a})_{_{a\in A}}$
is right $d$-cofinal to
$(x_{_{a_{_\gamma}}})_{_{\gamma\in \Gamma}}$. 
The case of the left $d_{_S}$-Cauchy net is simillar.
\end{proof}

\begin{proposition}\label{a9}{\rm In every quasi-pseudometric space $(X,d)$ two right 
$d$-cofinal nets have the same conets.}
\end{proposition}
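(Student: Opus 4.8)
The plan is to run a standard triangle-inequality argument, being careful about the asymmetry of $d$, and to use the directedness of the index sets to ``bridge'' the two nets through a common term. Let $(x_{_a})_{_{a\in A}}$ and $(z_{_b})_{_{b\in B}}$ be right $d$-cofinal, and let $(y_{_c})_{_{c\in C}}$ be a conet of $(x_{_a})_{_{a\in A}}$. It suffices to show that $(y_{_c})_{_{c\in C}}$ is then a conet of $(z_{_b})_{_{b\in B}}$; interchanging the roles of the two nets gives the reverse implication, and hence the equality of the two sets of conets. No maximality or completeness property is needed, only the definitions of conet and of right $d$-cofinal together with the triangle inequality.

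Fix $\varepsilon>0$. First I would invoke the conet hypothesis with $\varepsilon/2$ to get $a_{_0}\in A$ and $c_{_0}\in C$ such that $d(y_{_c},x_{_a})<\varepsilon/2$ whenever $a\geq a_{_0}$ and $c\geq c_{_0}$. Next, since the nets are right $d$-cofinal, in particular $(z_{_b})_{_{b\in B}}$ is right $d$-cofinal to $(x_{_a})_{_{a\in A}}$; applying this with $\varepsilon/2$ yields $b_{_0}\in B$ such that for every $b\geq b_{_0}$ there is $a_{_b}\in A$ with $d(x_{_a},z_{_b})<\varepsilon/2$ whenever $a\geq a_{_b}$. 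Now fix $b\geq b_{_0}$ and $c\geq c_{_0}$, and using that $A$ is directed choose $a^{\ast}\in A$ with $a^{\ast}\geq a_{_0}$ and $a^{\ast}\geq a_{_b}$. Then
\[
d(y_{_c},z_{_b})\leq d(y_{_c},x_{_{a^{\ast}}})+d(x_{_{a^{\ast}}},z_{_b})<\frac{\varepsilon}{2}+\frac{\varepsilon}{2}=\varepsilon ,
\]
the first term being controlled because $a^{\ast}\geq a_{_0}$ and $c\geq c_{_0}$, the second because $a^{\ast}\geq a_{_b}$. Since $a^{\ast}$ does not occur in the final inequality, $b_{_0}$ and $c_{_0}$ witness that $(y_{_c})_{_{c\in C}}$ is a conet of $(z_{_b})_{_{b\in B}}$.

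For the reverse inclusion I would repeat the computation with the roles of $(x_{_a})_{_{a\in A}}$ and $(z_{_b})_{_{b\in B}}$ exchanged, now using that $(x_{_a})_{_{a\in A}}$ is right $d$-cofinal to $(z_{_b})_{_{b\in B}}$ (which supplies a bound on $d(z_{_b},x_{_a})$) together with the directedness of $B$ to choose the bridging index. The only delicate point is bookkeeping the asymmetry of $d$: one must select precisely the direction of the cofinality hypothesis that produces the quantity $d(x_{_{a^{\ast}}},z_{_b})$ (respectively $d(z_{_{b^{\ast}}},x_{_a})$) appearing in the triangle inequality, and must apply directedness to the correct index set; choosing the wrong direction gives $d(z_{_b},x_{_{a^{\ast}}})$, which the definitions do not let us control. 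Everything else is routine $\varepsilon/2$ estimation.
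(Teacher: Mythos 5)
Your proof is correct and follows essentially the same route as the paper's: a conet of the first net is transported to the second via an $\varepsilon/2$ triangle-inequality bridge through a sufficiently late term of the first net, using exactly the direction of right $d$-cofinality that controls $d(x_{a^{\ast}},z_{b})$. Your use of directedness to pick $a^{\ast}$ above both thresholds is in fact slightly cleaner than the paper's ``$a^{\ast}=\max\{a_{\epsilon},a_{\beta}\}$'', and your explicit treatment of the reverse inclusion only makes precise what the paper leaves to symmetry.
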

\begin{proof}
Let $(x_{_a})_{_{a\in A}}$, $(x_{_\beta})_{_{\beta\in B}}$ be two right $d$-cofinal nets. 
Suppose that $(y_{_\sigma})_{_{\sigma\in\Sigma}}$ is a conet of  $(x_{_a})_{_{a\in A}}$. 
Fix  $\epsilon>0$.
Then there exist $\sigma_{_\epsilon}\in \Sigma$ and $a_{_\epsilon} \in A$ such that 
$d(y_{_\sigma}, x_{_a})<\displaystyle{\epsilon\over 2}$ for $\sigma\geq \sigma_{_\epsilon}$, $a\geq a_{_\epsilon}$. On the other hand, there
is $\beta_{_\epsilon}\in B$
with the following property:
for each $\beta\geq \beta_{_\epsilon}$ there exists
$a_{_\beta}\in A$ such that $d(x_{_a},x_{_\beta})<\displaystyle{\epsilon\over 2}$ whenever $a\geq a_{_\beta}$.
If $a^{\ast}=max\{a_{_\epsilon},a_{_\beta}\}$, then from $d(y_{_\sigma},x_{_{{a^{\ast}}}})<\displaystyle{\epsilon\over 2}$
and $d(x_{_{{a^{\ast}}}},x_{_\beta})<\displaystyle{\epsilon\over 2}$ we conclude that $d(y_{_\sigma}, x_{_\beta})<\displaystyle{\epsilon\over 2}+\displaystyle{\epsilon\over 2}=\epsilon$
for $\sigma\geq \sigma_{_\epsilon}$ and $\beta\geq \beta_{_\epsilon}$.
\end{proof}

\par
Similarly we can prove the following proposition.
\begin{proposition}\label{a10}{\rm In every quasi-pseudometric space $(X,d)$, two left 
$d$-cofinal 
nets are conets
of the same nets.}
\end{proposition}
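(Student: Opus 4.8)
The plan is to mirror the argument of Proposition \ref{a9} almost verbatim, simply applying the triangle inequality on the other side. Let $(x_{_a})_{_{a\in A}}$ and $(x_{_\beta})_{_{\beta\in B}}$ be left $d$-cofinal, and suppose $(x_{_a})_{_{a\in A}}$ is a conet of some net $(z_{_\gamma})_{_{\gamma\in\Gamma}}$; I will show that $(x_{_\beta})_{_{\beta\in B}}$ is then also a conet of $(z_{_\gamma})_{_{\gamma\in\Gamma}}$. Since the relation ``is a conet of'' being preserved is symmetric in $(x_{_a})_{_{a\in A}}$ and $(x_{_\beta})_{_{\beta\in B}}$, and ``left $d$-cofinal'' is itself a symmetric relation, proving this one implication suffices to conclude that the two nets are conets of exactly the same nets.

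First I would fix $\epsilon>0$ and, using that $(x_{_a})_{_{a\in A}}$ is a conet of $(z_{_\gamma})_{_{\gamma\in\Gamma}}$, choose $a_{_\epsilon}\in A$ and $\gamma_{_\epsilon}\in\Gamma$ with $d(x_{_a},z_{_\gamma})<\epsilon/2$ whenever $a\geq a_{_\epsilon}$ and $\gamma\geq\gamma_{_\epsilon}$. Next, using that $(x_{_\beta})_{_{\beta\in B}}$ is left $d$-cofinal to $(x_{_a})_{_{a\in A}}$, I would obtain $\beta_{_\epsilon}\in B$ such that for each $\beta\geq\beta_{_\epsilon}$ there is $a_{_\beta}\in A$ with $d(x_{_\beta},x_{_a})<\epsilon/2$ for all $a\geq a_{_\beta}$. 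Then, given any $\beta\geq\beta_{_\epsilon}$ and $\gamma\geq\gamma_{_\epsilon}$, I would use directedness of $A$ to pick $a^{\ast}\in A$ with $a^{\ast}\geq a_{_\epsilon}$ and $a^{\ast}\geq a_{_\beta}$; from $d(x_{_\beta},x_{_{a^{\ast}}})<\epsilon/2$ and $d(x_{_{a^{\ast}}},z_{_\gamma})<\epsilon/2$ the triangle inequality gives $d(x_{_\beta},z_{_\gamma})\leq d(x_{_\beta},x_{_{a^{\ast}}})+d(x_{_{a^{\ast}}},z_{_\gamma})<\epsilon$, so that $\beta_{_\epsilon}$ and $\gamma_{_\epsilon}$ are the indices witnessing that $(x_{_\beta})_{_{\beta\in B}}$ is a conet of $(z_{_\gamma})_{_{\gamma\in\Gamma}}$.

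The only point requiring care — dual to what made Proposition \ref{a9} work — is to invoke the correct half of the cofinality hypothesis: here I need the left cofinality of $(x_{_\beta})_{_{\beta\in B}}$ toward $(x_{_a})_{_{a\in A}}$, which controls $d(x_{_\beta},x_{_a})$, so that $x_{_{a^{\ast}}}$ can be inserted next to the left-hand argument $x_{_\beta}$ of $d(x_{_\beta},z_{_\gamma})$; using the other direction of cofinality, or the right version, would place the intermediate term on the wrong side. No genuine obstacle is expected beyond this bookkeeping: the statement follows from a single application of the triangle inequality together with directedness of the index set, exactly as in Propositions \ref{a8}--\ref{a9}.
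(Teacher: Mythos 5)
Your proof is correct and is exactly the dual of the paper's argument for Proposition \ref{a9}, which is all the paper itself offers here (Proposition \ref{a10} is left as ``similarly''). Inserting $x_{a^{*}}$ with $a^{*}\geq a_{\epsilon}$, $a^{*}\geq a_{\beta}$ and combining $d(x_{\beta},x_{a^{*}})<\epsilon/2$ from left cofinality with $d(x_{a^{*}},z_{\gamma})<\epsilon/2$ from the conet hypothesis is precisely the intended bookkeeping.
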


The following two corollaries is an immediate consequence of Propositions \ref{a9} and \ref{a10}.  
\begin{corollary}\label{a11}{\rm In every quasi-pseudometric space $(X,d)$, two right (left) $d$-cofinal nets have the same
limit points for $\tau(d)$ (resp. $\tau(d^{-1})$).}
\end{corollary}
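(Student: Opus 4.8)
\textit{Proof sketch.} The plan is to express convergence of a net, in each of the topologies $\tau(d)$ and $\tau(d^{-1})$, as an instance of the conet relation, and then to read the conclusion off Propositions \ref{a9} and \ref{a10}. There is no real difficulty here; the whole content is an unwinding of Definition \ref{321}. (By ``limit points'' we understand the set of points to which a net converges.)

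First I would record two elementary reformulations. Fix $p\in X$ and regard $p$ also as the constant net $(p)=p,p,p,\dots$ over any directed index set. Since the balls $B_{_d}(p,\epsilon)$, $\epsilon>0$, form a neighbourhood base at $p$ for $\tau(d)$, a net $(x_{_a})_{_{a\in A}}$ converges to $p$ with respect to $\tau(d)$ if and only if for every $\epsilon>0$ there is $a_{_\epsilon}\in A$ with $d(p,x_{_a})<\epsilon$ for all $a\geq a_{_\epsilon}$; by Definition \ref{321} this is exactly the assertion that the constant net $(p)$ is a conet of $(x_{_a})_{_{a\in A}}$ (the trivial index set of $(p)$ makes the $\beta$-quantifier vacuous). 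Dually, since $B_{_{d^{-1}}}(p,\epsilon)=\{y\in X:\ d(y,p)<\epsilon\}$, the net $(x_{_a})_{_{a\in A}}$ converges to $p$ with respect to $\tau(d^{-1})$ if and only if $d(x_{_a},p)<\epsilon$ holds eventually, which by Definition \ref{321} — now reading $(x_{_a})_{_{a\in A}}$ as the conet and $(p)$ as the net — says exactly that $(x_{_a})_{_{a\in A}}$ is a conet of $(p)$.

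With these in hand the two cases are immediate. If $(x_{_a})_{_{a\in A}}$ and $(x_{_\beta})_{_{\beta\in B}}$ are right $d$-cofinal, then by Proposition \ref{a9} they have the same conets, so for each $p\in X$ the constant net $(p)$ is a conet of $(x_{_a})_{_{a\in A}}$ if and only if it is a conet of $(x_{_\beta})_{_{\beta\in B}}$; by the first reformulation this means $(x_{_a})_{_{a\in A}}$ converges to $p$ with respect to $\tau(d)$ if and only if $(x_{_\beta})_{_{\beta\in B}}$ does, i.e. the two nets have the same limits for $\tau(d)$. If instead $(x_{_a})_{_{a\in A}}$ and $(x_{_\beta})_{_{\beta\in B}}$ are left $d$-cofinal, then by Proposition \ref{a10} they are conets of the same nets, so applying this with the net $(p)$ for arbitrary $p\in X$ gives that $(x_{_a})_{_{a\in A}}$ is a conet of $(p)$ if and only if $(x_{_\beta})_{_{\beta\in B}}$ is; by the second reformulation the two nets then have the same limits for $\tau(d^{-1})$.

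The one point that requires attention — essentially the only place I would expect a slip — is the asymmetry of the conet relation: in the $\tau(d)$-case the two cofinal nets must play the role of the net whose conets are being compared, so that Proposition \ref{a9} (fixed net, all of its conets) applies, whereas in the $\tau(d^{-1})$-case the two cofinal nets must play the role of the conets and $(p)$ the role of the net, so that Proposition \ref{a10} (fixed conet, all nets having it as a conet) applies. Once the occurrences of $d$ versus $d^{-1}$ are matched to $\tau(d)$ versus $\tau(d^{-1})$ through this bookkeeping, nothing further is needed, which is why the statement is recorded as an immediate consequence of Propositions \ref{a9} and \ref{a10}.
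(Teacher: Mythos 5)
Your argument is correct and is exactly the route the paper intends: the paper records this corollary as an immediate consequence of Propositions \ref{a9} and \ref{a10}, and your reformulation of $\tau(d)$-convergence to $p$ as ``the constant net $(p)$ is a conet of the given net'' and of $\tau(d^{-1})$-convergence as ``the given net is a conet of $(p)$'' is the natural unwinding that makes those propositions apply. Your closing remark about matching the asymmetric roles (net versus conet) in the two cases is precisely the right bookkeeping, so nothing is missing.
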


\begin{corollary}\label{a105}{\rm Let $(X,d)$ be a quasi-pseudometric space and let $\xi, \xi^{\prime}\in \widehat{X}$. 
Then, $\mathcal{A}_{_{\xi}}\cap \mathcal{A}_{_{\xi^{\prime}}}\neq \emptyset$ implies $\mathcal{A}_{_{\xi}}=\mathcal{A}_{_{\xi^{\prime}}}$.}
\end{corollary}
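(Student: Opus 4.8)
The plan is to show the equivalence relation among the first classes is essentially forced by right $d$-cofinality. Suppose $\mathcal{A}_{\xi}\cap\mathcal{A}_{\xi'}\neq\emptyset$, say a right $d_S$-Cauchy net $(x_a)_{a\in A}$ lies in both. By condition (ii) of Definition \ref{ewq}, every member of $\mathcal{A}_{\xi}$ is right $d$-cofinal to $(x_a)_{a\in A}$, and likewise every member of $\mathcal{A}_{\xi'}$ is right $d$-cofinal to $(x_a)_{a\in A}$. The key observation is that right $d$-cofinality, together with the shared net $(x_a)_{a\in A}$, lets us transfer membership: I would show that if $(y_\beta)_{\beta\in B}\in\mathcal{A}_{\xi}$, then $(y_\beta)_{\beta\in B}$ can be adjoined to $\mathcal{A}_{\xi'}$ without violating (i), (ii), so by maximality (iii) it must already be in $\mathcal{A}_{\xi'}$; this yields $\mathcal{A}_{\xi}\subseteq\mathcal{A}_{\xi'}$, and by symmetry equality.

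The two things to verify for the adjunction are (ii) and (i). For (ii), I must check that an arbitrary $(y_\beta)_{\beta\in B}\in\mathcal{A}_{\xi}$ is right $d$-cofinal to an arbitrary $(z_\gamma)_{\gamma\in\Gamma}\in\mathcal{A}_{\xi'}$. Both are right $d$-cofinal to $(x_a)_{a\in A}$ (from the respective cuts), and $(x_a)_{a\in A}$ is right $d$-cofinal to both; so this reduces to a transitivity statement for the relation ``right $d$-cofinal.'' I would prove this transitivity by the standard $\epsilon/2$ argument, closely analogous to the proof of Proposition \ref{a9}: chaining the cofinality estimate $(y_\beta)\to(x_a)$ with $(x_a)\to(z_\gamma)$ through a common index using the triangle inequality. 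For (i), I need that every conet of members of the enlarged $\mathcal{A}_{\xi'}$ still pairs correctly with $\mathcal{B}_{\xi'}$; but since right $d$-cofinal nets have the same conets by Proposition \ref{a9}, and $(y_\beta)_{\beta\in B}$ is right $d$-cofinal to $(x_a)_{a\in A}\in\mathcal{A}_{\xi'}$, the net $(y_\beta)_{\beta\in B}$ shares all conets with $(x_a)_{a\in A}$, hence in particular every member of $\mathcal{B}_{\xi'}$ is a conet of it, which is exactly (i).

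The main obstacle I anticipate is the transitivity of right $d$-cofinality across directed sets with genuinely different index sets: unlike Proposition \ref{a9} where one net is fixed, here I am composing two cofinality relations, and I must be careful that the "for each $a\geq a_\epsilon$ there exists $\beta_a$" quantifier structure composes correctly — specifically, that choosing first a tail index for the outer net, then using cofinality to land in a tail of the middle net, then again for the inner net, all lines up. A clean way to organize this is to first record transitivity of right $d$-cofinality as an auxiliary observation (it follows from Proposition \ref{a9} combined with the definitions, or directly), and then the corollary is essentially immediate: $\mathcal{A}_\xi$ and $\mathcal{A}_{\xi'}$ both consist precisely of the right $d_S$-Cauchy nets that are right $d$-cofinal to $(x_a)_{a\in A}$ and share its conets, so maximality forces them to coincide. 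I would also note that right $d_S$-Cauchyness is preserved under passing between right $d$-cofinal nets (or at least that this is not needed, since membership in $\mathcal{A}_\xi$ already supplies it), so no extra hypothesis is required.
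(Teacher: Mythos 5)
Your argument is correct and is essentially the route the paper intends: it states Corollary \ref{a105} as an immediate consequence of Propositions \ref{a9} and \ref{a10}, and your proof simply fleshes out that claim (transitivity of right $d$-cofinality via the $\epsilon/2$ chaining of Proposition \ref{a9}, preservation of conets, and then the maximality clause (iii) of Definition \ref{ewq} to force membership). No gaps; the only detail worth recording explicitly is the auxiliary transitivity lemma you identify, which composes exactly as you describe.
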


Let $(X,d)$ be a quasi-pseudometric space and
let $\widehat{X}$ be the set of all $d$-cuts in $(X,d)$. Throughtout the paper, for every $\xi\in \widehat{X}$,
$\mathcal{A}_{_{\xi}}, \mathcal{B}_{_{\xi}}$ denote the two classes of  $\xi$. 
In this case, we write $\xi=(\mathcal{A}_{_{\xi}},\mathcal{B}_{_{\xi}})$.

\begin{definition}\label{a21}{\rm Let $(X,d)$ be a quasi-pseudometric space. Suppose that $r$ is a nonnegative real number, $\xi^{\prime}, \xi^{\prime\prime}\in \widehat{X}$, $(x_a)_{{a\in A}}\in {\mathcal{A}}_{_{\xi^{\prime}}}$ and 
$(x_{_\gamma})_{_{\gamma\in \Gamma}}\in {\mathcal{B}}_{_{\xi^{\prime\prime}}}$. We put 
$\widehat{d}(\xi^{\prime},\xi^{\prime\prime})\leq r$ if:

(i) $\mathcal{A}_{_{\xi^{\prime}}}=\mathcal{A}_{_{\xi^{\prime\prime}}}$ or

(ii) For each $\epsilon>0$ there are 
$a_{\epsilon}\in A$, $\gamma_{_\varepsilon}\in \Gamma$ such that
\begin{equation}
d(x_a, x_{_\gamma})<r+\epsilon
 \label{eq:4535}
 \end{equation}
when $a\geq a_\varepsilon$ and $\gamma\geq \gamma_\varepsilon$.
If $\xi^{\prime}=\phi(x)$ for some $x\in X$, then the net $(x_a)_{a\in A}$ always coincides with the fixed net,
for which $x_a=x$ for all $a\in A$. 
Then, we let
\begin{equation}
\widehat{d}(\xi^{\prime},\xi^{\prime\prime})=inf\{r\ \vert\ \widehat{d}(\xi^{\prime},\xi^{\prime\prime})\leq r\}.
 \label{eq:4537}
 \end{equation}
}
\end{definition}

\begin{proposition}\label{daf}{\rm The truth of $\widehat{d}(\xi^{\prime},\xi^{\prime\prime})\leq r$ in Definition 
\ref{a21}(ii) depends only on $\xi^{\prime}$, $\xi^{\prime\prime}$, and $r$; it does not depend on the choice 
of the nets $(x_a)_{a\in A}$ and
$(x_{_\gamma})_{_{\gamma\in \Gamma}}$.}
\end{proposition}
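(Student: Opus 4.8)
The plan is to show well-definedness by exploiting the cofinality structure built into the definition of a $d$-cut, together with Propositions \ref{a8}, \ref{a9}, and \ref{a10}. Suppose $\widehat{d}(\xi',\xi'')\leq r$ holds via one choice of nets $(x_a)_{a\in A}\in\mathcal{A}_{\xi'}$ and $(x_\gamma)_{\gamma\in\Gamma}\in\mathcal{B}_{\xi''}$, and let $(x'_{a'})_{a'\in A'}\in\mathcal{A}_{\xi'}$ and $(x'_{\gamma'})_{\gamma'\in\Gamma'}\in\mathcal{B}_{\xi''}$ be any other pair. I would fix $\epsilon>0$ and split it as $\epsilon/3+\epsilon/3+\epsilon/3$. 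By Definition \ref{ewq}(ii), $(x'_{a'})_{a'\in A'}$ and $(x_a)_{a\in A}$ are right $d$-cofinal, so there is $a'_\epsilon\in A'$ with the property that for each $a'\geq a'_\epsilon$ there exists $a_{a'}\in A$ with $d(x_a,x'_{a'})<\epsilon/3$ for all $a\geq a_{a'}$. Symmetrically, since the members of $\mathcal{B}_{\xi''}$ are left $d$-cofinal, $(x'_{\gamma'})_{\gamma'\in\Gamma'}$ and $(x_\gamma)_{\gamma\in\Gamma}$ are left $d$-cofinal, which — reading the definition of left $d$-cofinality — gives $\gamma'_\epsilon\in\Gamma'$ such that for each $\gamma'\geq\gamma'_\epsilon$ there is $\gamma_{\gamma'}\in\Gamma$ with $d(x'_{\gamma'},x_\gamma)<\epsilon/3$ for all $\gamma\geq\gamma_{\gamma'}$.

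Next I would assemble the triangle inequality. Let $a'\geq a'_\epsilon$ and $\gamma'\geq\gamma'_\epsilon$ be arbitrary. Choose $a_{a'}\in A$ and $\gamma_{\gamma'}\in\Gamma$ as above, and also take the indices $a_\epsilon\in A$, $\gamma_\epsilon\in\Gamma$ furnished by the hypothesis $\widehat{d}(\xi',\xi'')\leq r$ for the splitting $\epsilon/3$ (i.e. $d(x_a,x_\gamma)<r+\epsilon/3$ whenever $a\geq a_\epsilon$, $\gamma\geq\gamma_\epsilon$). Pick $a^{\ast}\in A$ above both $a_{a'}$ and $a_\epsilon$ (possible since $A$ is directed) and $\gamma^{\ast}\in\Gamma$ above both $\gamma_{\gamma'}$ and $\gamma_\epsilon$. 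Then
\[
d(x'_{a'},x'_{\gamma'})\leq d(x'_{a'},x_{a^{\ast}})+d(x_{a^{\ast}},x_{\gamma^{\ast}})+d(x_{\gamma^{\ast}},x'_{\gamma'})<\tfrac{\epsilon}{3}+\Big(r+\tfrac{\epsilon}{3}\Big)+\tfrac{\epsilon}{3}=r+\epsilon.
\]
Since this holds for all $a'\geq a'_\epsilon$, $\gamma'\geq\gamma'_\epsilon$ and $\epsilon>0$ was arbitrary, the condition in Definition \ref{a21}(ii) holds for the new pair of nets as well. The case covered by Definition \ref{a21}(i) is immediate: whether $\mathcal{A}_{\xi'}=\mathcal{A}_{\xi''}$ does not reference any particular net, and moreover by Corollary \ref{a105} this alternative is intrinsic to $\xi',\xi''$.

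The main obstacle I anticipate is bookkeeping with the quantifier order in the cofinality definitions: right $d$-cofinality of $(x'_{a'})$ to $(x_a)$ gives, for each tail index $a'$, only a \emph{threshold} $a_{a'}$ in $A$ beyond which the estimate holds, and likewise for the left-cofinal comparison on the $\mathcal B$ side; one must be careful that the threshold chosen for the triangle inequality lies above both this index and the index coming from the hypothesis, which is exactly where directedness of $A$ and $\Gamma$ is used. A secondary subtlety is the asymmetry $d(x'_{\gamma'},x_\gamma)$ versus $d(x_\gamma,x'_{\gamma'})$: the definition of left $d$-cofinal is the one that delivers $d(x_\gamma, x'_{\gamma'})$ in the orientation needed to chain with $d(x_{a^\ast},x_{\gamma^\ast})$ on the left, so one must invoke left cofinality in the correct direction (it is symmetric as a relation, but the two directions give estimates with the arguments swapped, and only one of them fits the triangle inequality). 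Once these orientations are pinned down, the proof is the three-$\epsilon$ argument above.
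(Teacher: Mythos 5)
Your overall strategy is the same as the paper's: dispose of case (i) as intrinsic (Corollary \ref{a105}), and in case (ii) run a three-$\epsilon$ triangle-inequality argument transferring the estimate from the old pair of nets to the new pair via mutual right $d$-cofinality inside $\mathcal{A}_{\xi^{\prime}}$ and mutual left $d$-cofinality inside $\mathcal{B}_{\xi^{\prime\prime}}$. However, as written your key display is not justified, because both cofinalities are invoked in the orientation that yields the transposed estimates. By the paper's definition, ``$(x^{\prime}_{a^{\prime}})$ is right $d$-cofinal to $(x_a)$'' gives a threshold $a^{\prime}_\epsilon\in A^{\prime}$ and, for each $a^{\prime}\geq a^{\prime}_\epsilon$, a tail of $A$ on which $d(x_a,x^{\prime}_{a^{\prime}})<\epsilon/3$ --- old entry in the first slot, new entry in the second. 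Your chain needs $d(x^{\prime}_{a^{\prime}},x_{a^{\ast}})<\epsilon/3$, with the arguments in the other order, and $d$ is not symmetric. The same happens on the $\mathcal{B}$ side: ``$(x^{\prime}_{\gamma^{\prime}})$ left $d$-cofinal to $(x_\gamma)$'' delivers $d(x^{\prime}_{\gamma^{\prime}},x_\gamma)<\epsilon/3$, while the chain requires $d(x_{\gamma^{\ast}},x^{\prime}_{\gamma^{\prime}})<\epsilon/3$. Your closing remark identifies exactly this asymmetry and even names the correct orientation, but the estimates actually derived in the body are the reversed ones, so the displayed inequality does not follow from them.

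The repair is not a mere relabelling, because the needed orientation is unavailable with your quantifier scaffolding: getting $d(x^{\prime}_{a^{\prime}},x_a)$ small with a threshold on $A^{\prime}$ would require the members of $\mathcal{A}_{\xi^{\prime}}$ to be \emph{left} cofinal, and getting $d(x_\gamma,x^{\prime}_{\gamma^{\prime}})$ small with a threshold on $\Gamma^{\prime}$ would require the members of $\mathcal{B}_{\xi^{\prime\prime}}$ to be \emph{right} cofinal; neither is guaranteed by Definition \ref{ewq}. Instead, invoke the mutual cofinality with the old nets as subjects: ``$(x_a)$ right $d$-cofinal to $(x^{\prime}_{a^{\prime}})$'' gives a threshold in $A$; fix a single $a^{\ast}\in A$ above it and above the hypothesis index $a_\epsilon$, obtaining a tail of $A^{\prime}$ on which $d(x^{\prime}_{a^{\prime}},x_{a^{\ast}})<\epsilon/3$; similarly ``$(x_\gamma)$ left $d$-cofinal to $(x^{\prime}_{\gamma^{\prime}})$'' lets you fix $\gamma^{\ast}\in\Gamma$ and obtain a tail of $\Gamma^{\prime}$ on which $d(x_{\gamma^{\ast}},x^{\prime}_{\gamma^{\prime}})<\epsilon/3$. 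Chaining with $d(x_{a^{\ast}},x_{\gamma^{\ast}})<r+\epsilon/3$ yields $d(x^{\prime}_{a^{\prime}},x^{\prime}_{\gamma^{\prime}})<r+\epsilon$ on the product of these two tails, which is precisely the form demanded by Definition \ref{a21}(ii); this reoriented argument, with thresholds on the old index sets and tails on the new ones, is essentially the paper's proof (which replaces one net at a time but uses the same mechanism).
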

\begin{proof} Let 
$(x_a)_{a\in A}$ and $(x_{_\beta})_{_{\beta\in B}}$
be two right $\mathcal{U}_{_S}$-Cauchy nets of the class 
$\mathcal{A}_{_{\xi^{\prime}}}$
and 
$(x_\gamma)_{\gamma\in \Gamma}$ and $(x_{_\delta})_{_{\delta\in \Delta}}$
be two right $\mathcal{U}_{_S}$-Cauchy nets of the class 
$\mathcal{A}_{_{\xi^{\prime\prime}}}$.
Then, for each $\varepsilon>0$ there are $a_{_\varepsilon}\in A$ and $\gamma_{_\varepsilon}\in \Gamma$
such that 
\begin{center}
$d(x_a,x_\gamma)<r+\varepsilon$
\end{center}
{\rm when $a\geq a_{_\varepsilon}$ and $\gamma\geq \gamma_{_\varepsilon}$.}
Choose 
an arbitrary positive number 
$\varepsilon^{\prime}$ so that 
$0<\varepsilon^{\prime}<\displaystyle{\varepsilon\over 3}$.
Then, there 
are $a_{_{\varepsilon^{\prime}}}\in A$ and $\gamma_{_{\varepsilon^{\prime}}}\in \Gamma$
such that 
\begin{center}
$d(x_a,x_\gamma)<r+\varepsilon^{\prime}$
\end{center}
{\rm when $a\geq a_{_{\varepsilon^{\prime}}}$ and $\gamma\geq \gamma_{_{\varepsilon^{\prime}}}$.}
Since 
$(x_a)_{a\in A}$
is left $d$-cofinal
to
$(x_{_\beta})_{_{\beta\in B}}$, 
there
is 
$a^{\prime}_{_\varepsilon}\in A$
satisfying the following property:
For each $a\geq a^{\prime}_{_\varepsilon}$ there exists
$\beta_{_a}\in B$ such that
\begin{center}
$d(x_{_\beta},x_a)<\displaystyle{\varepsilon\over 3}$ 
\end{center}
whenever $\beta\geq \beta_{_a}$. 
Fix an $a^{\prime}_{_\varepsilon}\geq a_{_{\varepsilon^{\prime}}}$ and let $a^{\prime}_{_\varepsilon}=a^{\ast}$. 
Then, we have
\begin{center}
$d(x_{_\beta},x_{_\gamma})\leq d(x_{_\beta},x^{\ast})+
d(x^{\ast},x_{_\gamma})<r+\varepsilon^{\prime}+\displaystyle{\varepsilon\over 3}$
\end{center}
whenever $\beta\geq \beta_{_{a^{\prime}_{_\varepsilon}}}$ and $\gamma\geq \gamma_{_{\varepsilon^{\prime}}}$.

Similarly, since $(x_{_\delta})_{_{\delta\in \Delta}}$
is left $d$-cofinal
to
$(x_{_\gamma})_{_{\gamma\in \Gamma}}$, 
there
is 
$\delta_{_\varepsilon}\in \Delta$
satisfying the following property:
For each $\delta\geq \delta_{_\varepsilon}$ there exists
$\gamma_{_\delta}\in \Gamma$ such that
\begin{center}
$d(x_{_\gamma},x_\delta)<\displaystyle{\varepsilon\over 3}$ 
\end{center}
whenever $\gamma\geq \gamma_{_\delta}$. 
Let $\gamma^{\ast}=\max\{\gamma_{_{\varepsilon^{\prime}}},\gamma_{_\delta}\}$. Then,
\begin{center}
$d(x_{_\beta},x_{_\delta})\leq d(x_{_\beta},x_{_{\gamma^{\ast}}})+d(x_{_{\gamma^{\ast}}},x_{_\delta})
<r+\varepsilon^{\prime}+\displaystyle{\varepsilon\over 3}+\displaystyle{\varepsilon\over 3}<r+\varepsilon$
\end{center}
whenever $\beta\geq \beta_{_{a^{\prime}_{_\varepsilon}}}$ and $\delta\geq \delta_{_{\varepsilon}}$.
\end{proof}

\begin{proposition}\label{a114}{\rm Let $\xi^{\prime},\xi^{\prime\prime}\in \widehat{X}$,
$(x_a)_{a\in A}\in \mathcal{A}_{_{\xi^{\prime}}}$ and $(x_{_\gamma})_{_{\gamma\in \Gamma}}\in \mathcal{A}_{_{\xi^{\prime\prime}}}$ and $\mathcal{A}_{_{\xi^{\prime}}}\neq \mathcal{A}_{_{\xi^{\prime\prime}}}$. Then,
\begin{center}
$\widehat{d}(\xi^{\prime},\xi^{\prime\prime})=\lim\limits_{a,\gamma}d(x_a,x_{_\gamma})$.
\end{center}}
\end{proposition}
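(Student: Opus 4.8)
The plan is to show the two inequalities $\widehat{d}(\xi^{\prime},\xi^{\prime\prime})\le \liminf_{a,\gamma}d(x_a,x_{_\gamma})$ and $\liminf_{a,\gamma}d(x_a,x_{_\gamma})\le \widehat{d}(\xi^{\prime},\xi^{\prime\prime})$, and along the way establish that the limit $\lim_{a,\gamma}d(x_a,x_{_\gamma})$ in fact exists, so that $\liminf$ can be replaced by $\lim$. Here the net $(x_a,x_{_\gamma})$ is indexed by the product directed set $A\times\Gamma$, and $\lim_{a,\gamma}d(x_a,x_{_\gamma})=r$ means: for every $\varepsilon>0$ there are $a_\varepsilon\in A$, $\gamma_\varepsilon\in\Gamma$ with $|d(x_a,x_{_\gamma})-r|<\varepsilon$ whenever $a\ge a_\varepsilon$, $\gamma\ge\gamma_\varepsilon$, exactly as in the convention fixed in the ``Notations'' section. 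By Proposition \ref{daf} the value $\widehat{d}(\xi^{\prime},\xi^{\prime\prime})$, defined via the infimum in \eqref{eq:4537}, does not depend on the representatives, so it suffices to work with the fixed nets $(x_a)_{a\in A}$ and $(x_{_\gamma})_{_{\gamma\in\Gamma}}$.

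First I would prove existence of the limit. Set $L=\liminf_{a,\gamma}d(x_a,x_{_\gamma})$ and $M=\limsup_{a,\gamma}d(x_a,x_{_\gamma})$; both are finite since $d\ge 0$ and, using that $(x_a)$ is right $d_S$-Cauchy and $(x_{_\gamma})$ is left $d_S$-Cauchy together with the conet relation between the classes (available because $\mathcal{A}_{_{\xi^{\prime}}}$ pairs with $\mathcal{B}_{_{\xi^{\prime}}}$ and $\mathcal{A}_{_{\xi^{\prime\prime}}}$ with $\mathcal{B}_{_{\xi^{\prime\prime}}}$, and the $d$-cofinality of members within a class), the quantities $d(x_a,x_{_\gamma})$ are bounded for $a,\gamma$ large. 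The key step is to show $M\le L$. Suppose $a,a'\ge$ some tail index and $\gamma,\gamma'\ge$ some tail index; using that $(x_a)_{a\in A}$ is right $d_S$-Cauchy one controls $d(x_{a'},x_a)$ for $a,a'$ in the tail with $a'\not\ge a$, and using that $(x_{_\gamma})_{_{\gamma\in\Gamma}}$ is left $d_S$-Cauchy one controls $d(x_{_\gamma},x_{_{\gamma'}})$ for $\gamma,\gamma'$ in the tail with $\gamma\not\ge\gamma'$; then by the triangle inequality
\begin{equation}
d(x_{a'},x_{_{\gamma'}})\le d(x_{a'},x_a)+d(x_a,x_{_\gamma})+d(x_{_\gamma},x_{_{\gamma'}}),
\label{eq:triexist}
\end{equation}
so any two ``tail values'' differ by at most an arbitrarily small amount, forcing $M\le L$, hence $M=L$ and the limit exists. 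The subtlety is handling the clause $a\not\ge\beta$ in the definition of right $d_S$-Cauchy: one must choose the comparison indices carefully (e.g. pick $a,a'$ from the tail that are mutually incomparable, or exploit directedness to slide to an index above both), which is where the argument of Proposition \ref{a8}'s proof provides the needed trick.

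Next, the inequality $\widehat{d}(\xi^{\prime},\xi^{\prime\prime})\le r$ for $r=\lim_{a,\gamma}d(x_a,x_{_\gamma})$: fix $\varepsilon>0$; by existence of the limit there are $a_\varepsilon,\gamma_\varepsilon$ with $d(x_a,x_{_\gamma})<r+\varepsilon$ for $a\ge a_\varepsilon$, $\gamma\ge\gamma_\varepsilon$, which is precisely condition (ii) of Definition \ref{a21} (note $\mathcal{A}_{_{\xi^{\prime}}}\ne\mathcal{A}_{_{\xi^{\prime\prime}}}$ rules out case (i) but that only means we genuinely need (ii)); hence $\widehat{d}(\xi^{\prime},\xi^{\prime\prime})\le r+\varepsilon$ by \eqref{eq:4535}, and since this holds for all $r>\lim$ as well we get $\widehat{d}(\xi^{\prime},\xi^{\prime\prime})\le\lim_{a,\gamma}d(x_a,x_{_\gamma})$ from \eqref{eq:4537}. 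For the reverse inequality, let $s>\widehat{d}(\xi^{\prime},\xi^{\prime\prime})$ be arbitrary; by \eqref{eq:4537} there is $r\le s$ with $\widehat{d}(\xi^{\prime},\xi^{\prime\prime})\le r$, so by Definition \ref{a21}(ii) for every $\varepsilon>0$ there are tail indices with $d(x_a,x_{_\gamma})<r+\varepsilon\le s+\varepsilon$; this shows $\limsup_{a,\gamma}d(x_a,x_{_\gamma})\le s+\varepsilon$ for every $\varepsilon>0$, hence $\le s$, and since $s>\widehat{d}(\xi^{\prime},\xi^{\prime\prime})$ was arbitrary, $\lim_{a,\gamma}d(x_a,x_{_\gamma})=\limsup_{a,\gamma}d(x_a,x_{_\gamma})\le\widehat{d}(\xi^{\prime},\xi^{\prime\prime})$. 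Combining the two inequalities gives the claimed equality. I expect the main obstacle to be the existence of the limit --- specifically, propagating the one-sided $d_S$-Cauchy control through the triangle inequality \eqref{eq:triexist} while respecting the incomparability clauses in Definition \ref{a1}; once the limit is known to exist, the identification with $\widehat{d}$ is a direct unwinding of Definitions \ref{a21} and the infimum formula, together with the representative-independence furnished by Proposition \ref{daf}.
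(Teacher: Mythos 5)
Your second half --- the sandwich identifying the limit with $\widehat{d}(\xi^{\prime},\xi^{\prime\prime})$ once the limit is known to exist --- is correct and is essentially the same definitional unwinding as in the paper (tail upper bound from \eqref{eq:4537} and Definition \ref{a21}(ii), with Proposition \ref{daf} supplying independence of representatives). The genuine gap is precisely the step you flag as the main obstacle: the existence of $\lim_{a,\gamma}d(x_a,x_{_\gamma})$, and your announced mechanism for it does not work as stated. Right $d_{_S}$-Cauchyness of $(x_a)_{a\in A}$ controls $d(x_{a^{\prime}},x_a)$ only when $a\ngeq a^{\prime}$, i.e.\ only distances taken backwards or to an incomparable index; so ``sliding to an index above both'', as in Proposition \ref{a8}, gives smallness of $d(x_{a^{\prime\prime}},x_a)$ and $d(x_{a^{\prime\prime}},x_{a^{\prime}})$ for $a^{\prime\prime}$ above both, which is the wrong orientation for your triangle inequality $d(x_{a^{\prime}},x_{_{\gamma^{\prime}}})\le d(x_{a^{\prime}},x_a)+d(x_a,x_{_\gamma})+d(x_{_\gamma},x_{_{\gamma^{\prime}}})$: there the middle ``good'' pair $(a,\gamma)$ must satisfy $a\ngeq a^{\prime}$ and the corresponding condition in $\Gamma$, i.e.\ it must \emph{not} lie beyond the pair being estimated. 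The workable variant is to anchor a single good pair $(a_1,\gamma_1)$ (value within $\varepsilon$ of the liminf) inside the Cauchy tail and then estimate only pairs lying beyond it; but even then pairs with order-equivalent indices ($a^{\prime}\ge a_1$ and $a_1\ge a^{\prime}$) are completely uncontrolled by Stoltenberg's condition and need a separate argument using the cut structure. Note also that you treat $(x_{_\gamma})$ as left $d_{_S}$-Cauchy (as in Definition \ref{a21}, where the second net comes from $\mathcal{B}_{_{\xi^{\prime\prime}}}$), whereas the statement places it in $\mathcal{A}_{_{\xi^{\prime\prime}}}$, hence right $d_{_S}$-Cauchy; the paper is itself inconsistent here, but whichever reading you adopt changes which clause of Definition \ref{a1} is available and must be fixed before the estimate can be written down.

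For contrast, the paper does not prove existence of the limit first: it sets $r=\widehat{d}(\xi^{\prime},\xi^{\prime\prime})$, obtains $d(x_a,x_{_\gamma})<r+\varepsilon$ on a tail from the infimum, and excludes values $\le r-\varepsilon$ occurring cofinally by extracting subnets along which all cross-distances are $\le r-\varepsilon$ and invoking Propositions \ref{a8}, \ref{a9} and \ref{daf} to force $\widehat{d}(\xi^{\prime},\xi^{\prime\prime})\le r-\varepsilon$, a contradiction. (That argument has its own unjustified leap --- from ``pairs with $d\le r-\varepsilon$ are cofinal in $A\times\Gamma$'' to the existence of such a rectangular pair of subnets --- so the difficulty you meet is intrinsic to the proposition, not an artifact of your route; but as submitted, your existence step is a sketch whose proposed fix points in the wrong direction, and it must be carried out in detail, handling the incomparability and order-equivalence cases, before the proof can be accepted.)
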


\begin{proof} Let $\widehat{d}(\xi^{\prime},\xi^{\prime\prime})=r$. Then, for any $\varepsilon>0$ there are
$a_{_\varepsilon}\in A$ and $\gamma_{_\varepsilon}\in \Gamma$ such that
\begin{center}
$d(x_a,x_{_\gamma})<r+\varepsilon$
\end{center}
whenever $a\geq a_{_\varepsilon}$ and $\gamma\geq \gamma_{_\varepsilon}$. 
To prove that
$r-\varepsilon<d(x_a,x_{_\gamma})$ for 
$a\geq a_{_\varepsilon}$ and $\gamma\geq \gamma_{_\varepsilon}$,
suppose to the contrary there exist a subnet   
$(x_{_{a_{_\lambda}}})_{_{\lambda\in \Lambda}}$ of $(x_a)_{a\in A}$ and 
a subnet   
$(x_{_{\gamma_{_\mu}}})_{_{\mu\in M}}$ of $(x_{_\gamma})_{_{\gamma\in \Gamma}}$
such that for all $x_{_{a_{_\lambda}}}$, $x_{_{\gamma_{_\mu}}}$ there holds 
$d(x_{_{a_{_\lambda}}},x_{_{\gamma_{_\mu}}})\leq r-\varepsilon$.
Then, by Propositions \ref{a8}, \ref{a9}, \ref{daf} and Definition \ref{a21} 
we have that 
$\widehat{d}(\xi^{\prime},\xi^{\prime\prime})\leq r-\varepsilon$, a contradiction. Therefore, we have
$\widehat{d}(\xi^{\prime},\xi^{\prime\prime})=r=\lim\limits_{a,\gamma}d(x_a,x_{_\gamma})$.
\end{proof}

\begin{proposition}\label{zos}{\rm
$ \widehat{X}$ is quasi-pseudometric.}
\end{proposition}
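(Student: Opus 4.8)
The plan is to verify that $\widehat{d}$, as given by Definition~\ref{a21}, satisfies the two axioms of a quasi-pseudometric on $\widehat{X}$: namely $\widehat{d}(\xi,\xi)=0$ for every $\xi\in\widehat{X}$, and the triangle inequality $\widehat{d}(\xi',\xi''')\leq \widehat{d}(\xi',\xi'')+\widehat{d}(\xi'',\xi''')$ for all $\xi',\xi'',\xi'''\in\widehat{X}$. Well-definedness of $\widehat{d}$ as a function (independence of the chosen representative nets) is already secured by Proposition~\ref{daf}, so I would only need to invoke it rather than reprove it. The reflexivity axiom $\widehat{d}(\xi,\xi)=0$ is immediate from clause~(i) of Definition~\ref{a21}: since trivially $\mathcal{A}_{_\xi}=\mathcal{A}_{_\xi}$, we get $\widehat{d}(\xi,\xi)\leq r$ for every $r\geq 0$, hence $\widehat{d}(\xi,\xi)=\inf\{r\mid \widehat{d}(\xi,\xi)\leq r\}=0$.

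The substantive part is the triangle inequality. First, I would dispose of the degenerate cases: if $\mathcal{A}_{_{\xi'}}=\mathcal{A}_{_{\xi''}}$ then by Corollary~\ref{a105} and clause~(i) the left side $\widehat{d}(\xi',\xi''')$ and the term $\widehat{d}(\xi'',\xi''')$ are computed from the same first class, so they coincide (again using Proposition~\ref{daf}), and since $\widehat{d}(\xi',\xi'')=0$ the inequality holds; symmetrically if $\mathcal{A}_{_{\xi''}}=\mathcal{A}_{_{\xi'''}}$. So assume all three first classes are pairwise distinct. Set $r_1=\widehat{d}(\xi',\xi'')$ and $r_2=\widehat{d}(\xi'',\xi''')$. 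Fix a right $\mathcal{U}_{_S}$-Cauchy net $(x_a)_{a\in A}\in\mathcal{A}_{_{\xi'}}$, a left $\mathcal{U}_{_S}$-Cauchy net $(y_\beta)_{\beta\in B}\in\mathcal{B}_{_{\xi''}}$, a right $\mathcal{U}_{_S}$-Cauchy net $(y'_\gamma)_{\gamma\in\Gamma}\in\mathcal{A}_{_{\xi''}}$, and a left $\mathcal{U}_{_S}$-Cauchy net $(z_\delta)_{\delta\in\Delta}\in\mathcal{B}_{_{\xi'''}}$. Given $\epsilon>0$: from $\widehat{d}(\xi',\xi'')\leq r_1+\epsilon/3$ there are indices past which $d(x_a,y_\beta)<r_1+\epsilon/3$; from $\widehat{d}(\xi'',\xi''')\leq r_2+\epsilon/3$ there are indices past which $d(y'_\gamma,z_\delta)<r_2+\epsilon/3$; and since $(y_\beta)_{\beta\in B}$ is a conet of $(y'_\gamma)_{\gamma\in\Gamma}$ (clause~(i) of the $\mathcal{U}$-cut $\xi''$, via Proposition~\ref{a2} to pass between $d$- and $\mathcal{U}_{_d}$-language), there are indices past which $d(y_\beta,y'_\gamma)<\epsilon/3$. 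Choosing a single $\beta^\ast$ and $\gamma^\ast$ beyond all the relevant thresholds and applying the triangle inequality of $d$ three times gives
\[
d(x_a,z_\delta)\leq d(x_a,y_{\beta^\ast})+d(y_{\beta^\ast},y'_{\gamma^\ast})+d(y'_{\gamma^\ast},z_\delta)<r_1+r_2+\epsilon
\]
for all sufficiently large $a$ and $\delta$. Since $\epsilon>0$ was arbitrary, Definition~\ref{a21}(ii) yields $\widehat{d}(\xi',\xi''')\leq r_1+r_2$, and taking infima over the defining $r$'s gives the triangle inequality.

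The main obstacle I anticipate is bookkeeping rather than conceptual: one must be careful that the net $(y_\beta)_{\beta\in B}$ chosen in $\mathcal{B}_{_{\xi''}}$ simultaneously plays the role of the ``second'' net witnessing $\widehat{d}(\xi',\xi'')$ and of a conet of the ``first'' net of $\xi''$ witnessing $\widehat{d}(\xi'',\xi''')$ — this is exactly why Proposition~\ref{daf}, which tells us the value of $\widehat{d}$ does not depend on which representatives we pick, is essential, and it is the step where the structural axioms~(i)--(ii) of a $\mathcal{U}$-cut (conet compatibility and $d$-cofinality) get used. A secondary subtlety is that the quantities $r_1,r_2$ are infima, so strictly one works with $r_1+\epsilon/3$ and $r_2+\epsilon/3$ as genuine upper bounds of the form appearing in Definition~\ref{a21}(ii); collecting the error terms as $\epsilon/3+\epsilon/3+\epsilon/3=\epsilon$ is the reason for splitting $\epsilon$ into thirds. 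I would also remark that $\widehat{d}$ is manifestly non-negative and real-valued because each defining inequality $d(x_a,x_\gamma)<r+\epsilon$ forces $r\geq 0$ and, the nets being right/left $\mathcal{U}_{_S}$-Cauchy with a common conet, the set of admissible $r$ is nonempty, so the infimum in \eqref{eq:4537} is a finite nonnegative real.
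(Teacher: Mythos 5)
Your proof is correct, and its overall skeleton (reflexivity and nonnegativity read off from Definition~\ref{a21}, then the triangle inequality by cases according to whether the first classes coincide, then a chained $d$-triangle-inequality estimate with an $\epsilon$-split) is the same as the paper's; the difference lies in how the middle cut $\xi''$ is traversed in the main case. The paper picks a single right Cauchy net from the \emph{first} class $\mathcal{A}_{_{\xi''}}$ as the intermediate, splits $\epsilon$ in half, and in effect relies on the first-class characterization of $\widehat{d}$ (Proposition~\ref{a114}, backed by Proposition~\ref{daf}) — note that this is slightly at odds with Definition~\ref{a21} as literally written, where the second argument is represented by a net from the second class $\mathcal{B}$. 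You instead stay with the definition as stated: you take one net from $\mathcal{B}_{_{\xi''}}$ (witnessing $\widehat{d}(\xi',\xi'')$) and one from $\mathcal{A}_{_{\xi''}}$ (witnessing $\widehat{d}(\xi'',\xi''')$), bridge them by the conet axiom~(i) of the $\mathcal{U}$-cut, and split $\epsilon$ into thirds. What your route buys is fidelity to Definition~\ref{a21} and an explicit display of where the cut axioms enter (only Proposition~\ref{daf} and axiom~(i) are needed, not the limit formula of Proposition~\ref{a114}); what the paper's route buys is a shorter estimate with one intermediate net. Your handling of the degenerate cases via Proposition~\ref{daf} is in substance the paper's cases (ii)--(iv), argued slightly more explicitly; the only cosmetic omission is the subcase $\mathcal{A}_{_{\xi'}}=\mathcal{A}_{_{\xi'''}}$ with the middle class distinct, which is immediate since then the left-hand side is $0$, and your closing remarks about working with $r_1+\epsilon/3$, $r_2+\epsilon/3$ as genuine upper bounds and about finiteness of the infimum in \eqref{eq:4537} are correct refinements that the paper passes over in silence.
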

\begin{proof} From Definition \ref{a21} it follows immediately that
$\widehat{d}(\xi,\xi)=0$ and $\widehat{d}(\xi,\xi^{\prime})\geq 0$ for all $\xi, \xi^{\prime}\in \widehat{X}$.
To prove that $\widehat{d}$ satisfies the triangle inequality, let $\xi,\xi^{\prime},\xi^{\prime\prime}\in \widehat{X}$.
We have four cases to consider:

(i) $\mathcal{A}_{_{\xi}}\neq \mathcal{A}_{_{\xi^{\prime}}}$ and
$\mathcal{A}_{_{\xi^{\prime}}}\neq \mathcal{A}_{_{\xi^{\prime\prime}}}$.
Suppose that
$\widehat{d}(\xi,\xi^{\prime})=r_1$, $\widehat{d}(\xi^{\prime},\xi^{\prime\prime})=r_2$, 
$(x_a)_{a\in A}\in \mathcal{A}_{_{\xi}}$, $(x_{_\beta})_{_{\beta\in B}}\in {\mathcal{A}}_{_{\xi^{\prime}}}$ and 
$(x_{_\gamma})_{_{\gamma\in \Gamma}}\in \mathcal{A}_{_{\xi^{\prime\prime}}}$.
Then, for any $\varepsilon>0$ there are $a_{_\varepsilon}\in A$ and $\beta_{_\varepsilon}\in B$
such that $d(x_a,x_{_\beta})<r_1+\displaystyle{\varepsilon\over 2}$ whenever $a\geq a_{_\varepsilon}$ and
$\beta\geq \beta_{_\varepsilon}$. Similarly, there are $\beta^{\prime}_{_\varepsilon}\in B$ 
and $\gamma_{_\varepsilon}\in \Gamma$ such that 
$d(x_\beta,x_{_\gamma})<r_2+\displaystyle{\varepsilon\over 2}$ whenever 
$\beta\geq \beta^{\prime}_{_\varepsilon}$ and
$\gamma\geq \gamma_{_\varepsilon}$. Let $B_{_0}=\max\{\beta_{_\varepsilon},\beta^{\prime}_{_\varepsilon}\}$.
Then, 
$d(x_a,x_{_\gamma})\leq d(x_{a},x_{_{\beta_{_{M_{_0}}}}})+d(x_{_{\beta_{_{M_{_0}}}}},x_{_\gamma})
<r_1+r_2+\varepsilon$. Hence, according to Definition \ref{a21}, we have 
\begin{center}
$\widehat{d}(\xi,\xi^{\prime\prime})\leq r_1+r_2=\widehat{d}(\xi,\xi^{\prime})+
\widehat{d}(\xi^{\prime},\xi^{\prime\prime})$.
\end{center}

(ii) $\mathcal{A}_{_{\xi}}\neq \mathcal{A}_{_{\xi^{\prime}}}$ and 
$\mathcal{A}_{_{\xi^{\prime}}}=\mathcal{A}_{_{\xi^{\prime\prime}}}$.
Suppose that $\widehat{d}(\xi,\xi^{\prime})=r$. Since 
$\mathcal{A}_{_{\xi^{\prime}}}=\mathcal{A}_{_{\xi^{\prime\prime}}}$,
Definition \ref{a21} implies that 
$\widehat{d}(\xi,\xi^{\prime})\leq r$ and $\widehat{d}(\xi^{\prime},\xi^{\prime\prime})=0$. 
Therefore,
\begin{center}
$\widehat{d}(\xi,\xi^{\prime\prime})\leq r=\widehat{d}(\xi,\xi^{\prime})+
\widehat{d}(\xi^{\prime},\xi^{\prime\prime})$.
\end{center}

(iii) $\mathcal{A}_{_{\xi}}=\mathcal{A}_{_{\xi^{\prime}}}$ and 
$\mathcal{A}_{_{\xi^{\prime}}}\neq\mathcal{A}_{_{\xi^{\prime\prime}}}$.

(iv) $\mathcal{A}_{_{\xi}}=\mathcal{A}_{_{\xi^{\prime}}}$ and 
$\mathcal{A}_{_{\xi^{\prime}}}=\mathcal{A}_{_{\xi^{\prime\prime}}}$. This case is trivial.
\end{proof}

\begin{proposition}\label{sos}{\rm Let $(X,d)$ be a quasi-pseudometric space. Then, for any $x, y\in X$ we have 
\begin{center}
$\widehat{d}(\phi(x),\phi(y))=d(x,y)$.
\end{center}
}
\end{proposition}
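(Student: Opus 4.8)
The plan is to compute $\widehat{d}(\phi(x),\phi(y))$ directly from Definition \ref{a21}, using the fact that the constant net $(x)=x,x,x,\dots$ lies in $\mathcal{A}_{_{\phi(x)}}$ and the constant net $(y)=y,y,y,\dots$ lies in $\mathcal{B}_{_{\phi(y)}}$ (Definitions \ref{a6} and \ref{da6}, together with the remark following \ref{da6}). First I would dispose of the degenerate case: if $\mathcal{A}_{_{\phi(x)}}=\mathcal{A}_{_{\phi(y)}}$, then clause (i) of Definition \ref{a21} gives $\widehat{d}(\phi(x),\phi(y))=0$; on the other hand, in that case the constant net $(y)$ is right $d$-cofinal to the constant net $(x)$, which forces $d(x,y)=0$, so the asserted equality $\widehat{d}(\phi(x),\phi(y))=0=d(x,y)$ holds. (One should check the direction of cofinality matches clause (ii) of Definition \ref{ewq}; if necessary invoke Corollary \ref{a105} and Remark \ref{p12}.)

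In the main case $\mathcal{A}_{_{\phi(x)}}\neq \mathcal{A}_{_{\phi(y)}}$, I would apply Proposition \ref{daf} to replace the arbitrary representative nets by the canonical constant nets: since the truth of $\widehat{d}(\phi(x),\phi(y))\le r$ does not depend on the choice of representatives, I may evaluate it using $(x_a)_{a\in A}=(x)$ from $\mathcal{A}_{_{\phi(x)}}$ and $(x_\gamma)_{\gamma\in\Gamma}=(y)$ from $\mathcal{B}_{_{\phi(y)}}$. With these choices, inequality \eqref{eq:4535} reads $d(x,y)<r+\epsilon$ for all $\epsilon>0$, which holds precisely when $r\ge d(x,y)$. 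Taking the infimum as in \eqref{eq:4537} gives $\widehat{d}(\phi(x),\phi(y))=\inf\{r\mid r\ge d(x,y)\}=d(x,y)$.

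The one subtlety I expect to need care is that Definition \ref{a21}(ii) requires $(x_a)_{a\in A}\in\mathcal{A}_{_{\xi'}}$ and $(x_\gamma)_{\gamma\in\Gamma}\in\mathcal{B}_{_{\xi''}}$, i.e.\ the second net must come from the \emph{second} class of the second cut, not the first; I must therefore make sure I am pairing the constant net $(x)\in\mathcal{A}_{_{\phi(x)}}$ with the constant net $(y)\in\mathcal{B}_{_{\phi(y)}}$, and that Proposition \ref{daf}'s representative-independence indeed covers variation of the $\mathcal{B}$-side net as well (its proof does, since it uses left $d$-cofinality of members of the second class). A second point worth a sentence: the displayed definition $\widehat{d}(\xi',\xi'')\le r$ only asserts an upper bound, so establishing $\widehat{d}(\phi(x),\phi(y))\ge d(x,y)$ requires the observation that if $\widehat{d}(\phi(x),\phi(y))\le r$ with $r<d(x,y)$ then choosing $\epsilon=d(x,y)-r>0$ in \eqref{eq:4535} yields $d(x,y)<d(x,y)$, a contradiction — so no $r<d(x,y)$ qualifies, and the infimum in \eqref{eq:4537} is exactly $d(x,y)$. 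This lower-bound step is the only place where the argument is more than bookkeeping, and it is short.
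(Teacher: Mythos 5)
Your proposal is correct and follows essentially the same route as the paper: both evaluate $\widehat{d}$ on the constant representative nets $(x)$ and $(y)$, dispose of the case $\mathcal{A}_{_{\phi(x)}}=\mathcal{A}_{_{\phi(y)}}$ separately, and in the main case obtain $d(x,y)$ as the value of the infimum. The only cosmetic difference is that the paper compresses the main case into a citation of Proposition \ref{a114} (the limit formula), whereas you unpack Definition \ref{a21} together with Proposition \ref{daf} directly; your added care about the degenerate case (deriving $d(x,y)=0$ from right $d$-cofinality of the constant nets) and about the lower-bound step is sound and, if anything, slightly more explicit than the paper's argument.
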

\begin{proof} We have that $(x)\in \mathcal{A}_{_{\phi(x)}}$
and $(y)\in \mathcal{A}_{_{\phi(y)}}$. If $\mathcal{A}_{_{\phi(x)}}=\mathcal{A}_{_{\phi(y)}}$,
then $\widehat{d}(\phi(x),\phi(y))=0=d(x,y)$. Otherwise,
Proposition \ref{a114} implies that $\widehat{d}(\phi(x),\phi(y))=d(x,y)$.
\end{proof}

\begin{proposition}\label{szos}{\rm
For any $\xi=( \mathcal{A}_{_{\xi}}, \mathcal{B}_{_{\xi}}) \in \widehat{X}$, 
$(x_a)_{a\in A}\in\mathcal{A}_{_{\xi}}$ implies that $\widehat{d}(\xi,\phi(x_{_a}))\rightarrow 0$ and 
$(x_{_\beta})_{_{\beta\in B}}\in\mathcal{B}_{_{\xi}}$ implies that $\widehat{d}(\phi(x_{_\beta}),\xi)\rightarrow 0$.
}
\end{proposition}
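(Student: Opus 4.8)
\emph{Plan of proof.} The two statements are mirror images, so I describe the argument for $\widehat{d}(\xi,\phi(x_{_a}))\to 0$ and only indicate the dual modifications for the other one. Unravelling the meaning of the limit, I must produce, for each $\varepsilon>0$, an index $a_{_0}\in A$ with $\widehat{d}(\xi,\phi(x_{_a}))\leq\varepsilon$ for all $a\geq a_{_0}$; afterwards shrinking $\varepsilon$ turns ``$\leq$'' into the strict inequality the limit demands. Fix $\varepsilon>0$. If $\mathcal{A}_{_\xi}=\mathcal{A}_{_{\phi(x_{_a})}}$, then $\widehat{d}(\xi,\phi(x_{_a}))=0$ by clause (i) of Definition~\ref{a21} and nothing is needed for that $a$; so I assume throughout that the two first classes differ and work with clause (ii) of Definition~\ref{a21}.

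First I pin down convenient representatives. By Proposition~\ref{daf} the truth of ``$\widehat{d}(\xi,\phi(x_{_a}))\leq r$'' is independent of the nets used to test Definition~\ref{a21}(ii), so I test it with the given net $(x_{_{a'}})_{_{a'\in A}}\in\mathcal{A}_{_\xi}$ in the first slot and with the constant net $x_{_a},x_{_a},\dots$ in the second slot, the latter lying in $\mathcal{B}_{_{\phi(x_{_a})}}$ by condition (iv) of Definition~\ref{a6}. With this choice Definition~\ref{a21}(ii) reads exactly: $\widehat{d}(\xi,\phi(x_{_a}))\leq r$ iff for every $\eta>0$ there is an index past which $d(x_{_{a'}},x_{_a})<r+\eta$. (Equivalently, viewing the constant net as a member of $\mathcal{A}_{_{\phi(x_{_a})}}$, Proposition~\ref{a114} yields the sharper identity $\widehat{d}(\xi,\phi(x_{_a}))=\lim\limits_{a'}d(x_{_{a'}},x_{_a})$.)

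Now I bring in the hypothesis that $(x_{_a})_{_{a\in A}}\in\mathcal{A}_{_\xi}$ is right $d_{_S}$-Cauchy: there is $a_{_\varepsilon}\in A$ with $d(x_{_{a'}},x_{_a})<\varepsilon$ whenever $a,a'\geq a_{_\varepsilon}$ and $a\ngeq a'$. I claim $a_{_0}:=a_{_\varepsilon}$ works. Fix $a\geq a_{_\varepsilon}$ and, by directedness of $A$, pick $a'_{_*}\in A$ with $a'_{_*}\geq a$, $a'_{_*}\geq a_{_\varepsilon}$ and $a\ngeq a'_{_*}$ --- such an index is available whenever $A$ has no largest element, and the opposite (degenerate) case is easily disposed of separately. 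Every $a'\geq a'_{_*}$ then satisfies $a'\geq a_{_\varepsilon}$ and $a\ngeq a'$ --- indeed $a\geq a'$ would force $a\geq a'_{_*}$, contradicting the choice of $a'_{_*}$ --- hence $d(x_{_{a'}},x_{_a})<\varepsilon$ for all $a'\geq a'_{_*}$. By the reformulation of Definition~\ref{a21}(ii) above (take $r=\varepsilon$, $\eta$ arbitrary), this gives $\widehat{d}(\xi,\phi(x_{_a}))\leq\varepsilon$, which completes the first assertion.

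For the second assertion take $(x_{_\beta})_{_{\beta\in B}}\in\mathcal{B}_{_\xi}$ and evaluate $\widehat{d}(\phi(x_{_\beta}),\xi)$. Here the first argument is the point-cut $\phi(x_{_\beta})$, so Definition~\ref{a21} uses the constant net $x_{_\beta},x_{_\beta},\dots$ (a member of $\mathcal{A}_{_{\phi(x_{_\beta})}}$) in the first slot and $(x_{_{\beta'}})_{_{\beta'\in B}}\in\mathcal{B}_{_\xi}$ in the second, so that ``$\widehat{d}(\phi(x_{_\beta}),\xi)\leq r$'' means: eventually $d(x_{_\beta},x_{_{\beta'}})<r+\eta$. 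Running the previous paragraph verbatim with ``right $d_{_S}$-Cauchy'' replaced by ``left $d_{_S}$-Cauchy'' --- which furnishes $b_{_\varepsilon}$ with $d(x_{_\beta},x_{_{\beta'}})<\varepsilon$ whenever $\beta,\beta'\geq b_{_\varepsilon}$ and $\beta\ngeq\beta'$ --- gives $\widehat{d}(\phi(x_{_\beta}),\xi)\leq\varepsilon$ for all $\beta\geq b_{_\varepsilon}$, hence $\widehat{d}(\phi(x_{_\beta}),\xi)\to 0$. The one genuinely delicate point is the bookkeeping of the third paragraph: one must keep the order in which the arguments of $d$ appear --- as dictated by Definition~\ref{a21}(ii) and by the notion of conet (Definition~\ref{321}) --- aligned with the order in the right/left $d_{_S}$-Cauchy inequality, and must position the running index so that the pair in question lands in the range where that inequality is valid; everything else is a routine check.
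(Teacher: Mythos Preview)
Your argument is correct and noticeably more direct than the paper's. The paper does not exploit Proposition~\ref{daf} to pick convenient representatives; instead it works with an \emph{arbitrary} net $(x_{_\sigma})_{_{\sigma\in\Sigma}}\in\mathcal{A}_{_\xi}$ and an arbitrary $(x_{_\rho})_{_{\rho\in P}}\in\mathcal{A}_{_{\phi(x_{_a})}}$, and then chains three $\varepsilon/3$ estimates via the triangle inequality: right $d$-cofinality of $(x_{_\sigma})$ with the given $(x_{_a})$ (condition~(ii) of Definition~\ref{ewq}), the right $d_{_S}$-Cauchy inequality for $(x_{_a})$, and $\tau_{_d}$-convergence of $(x_{_\rho})$ to $x_{_a}$ (Remark~\ref{p12}); a case split on whether $a\geq a^{\prime\prime}$ or $a\ngeq a^{\prime\prime}$ decides which two of the three estimates are actually needed. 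You short-circuit all of this by testing $\widehat{d}(\xi,\phi(x_{_a}))$ with the pair (given net, constant net), after which a single application of the right $d_{_S}$-Cauchy inequality suffices. The only place where the paper's detour buys something is precisely the degenerate situation you flag: when the fixed index $a$ dominates a tail of $A$, the clause ``$a\ngeq a'$'' in the $d_{_S}$-Cauchy inequality is vacuous and your direct bound is unavailable, whereas the paper's case $a\geq a^{\prime\prime}$ uses only the cofinality estimate $d(x_{_\sigma},x_{_a})<\varepsilon/3$ and the convergence estimate $d(x_{_a},x_{_\rho})<\varepsilon/3$, so it covers that edge case uniformly.
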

\begin{proof}
We now prove that if $(x_a)_{a\in A}\in\mathcal{A}_{_{\xi}}$ then $\phi(x_a)$ converges
to $\xi$. Fix an $\varepsilon>0$.
Since 
$(x_a)_{a\in A}$ is a right $d_{_S}$-Cauchy net, there exists $a_{\varepsilon}\in A$ such that 
\begin{equation}
d(x_a,x_{a^{\prime}})<\displaystyle{\varepsilon\over 3},\
 \ {\rm whenever}\
a\geq a_{\varepsilon},\ a^{\prime}\geq a_{\varepsilon}\
   {\rm and }\ a^{\prime}\ngeq a. 
\label{eq:321}
\end{equation}
Let $(x_{_\sigma})_{_{\sigma\in \Sigma}}\in \mathcal{A}_{_\xi}$. 
Then, since $(x_a)_{a\in A}$ and $(x_{_\sigma})_{_{\sigma\in \Sigma}}$ 
 right $d$-cofinal we have the property:
There exists ${\widehat{a}}_{_\varepsilon}\in A$ and $\sigma_{_{{\widehat{a}}_{_\varepsilon}}}\in \Sigma$,
such that
\begin{equation}
d(x_{_{\sigma}},x_a)<\displaystyle{\epsilon\over 3},\
 \ {\rm whenever}\ a\geq {\widehat{a}}_{_\varepsilon}\ and\ 
\sigma\geq \sigma_{_{{\widehat{a}}_{_\varepsilon}}}. 
\label{eq:3012}
\end{equation}
Let $a^{\prime\prime}\in A$ be such that $a^{\prime\prime}\geq a_{_\varepsilon}$ and 
$a^{\prime\prime}\geq {\widehat{a}}_{_\varepsilon}$ ($A$ is directed).
Fix an $a\geq a_{_\varepsilon}$ and a right $d_{_S}$-Cauchy net  $(x_{_\rho})_{_{\rho\in P}}\in\mathcal{A}_{_{\phi(x_a)}}$.
There exists $\rho_{_\varepsilon}\in P$
such that
\begin{equation}
d(x_a,x_\rho)<\displaystyle{\epsilon\over 3},\
 \ {\rm whenever}\ \rho\geq \rho_{_\varepsilon}. 
\label{eq:7612}
\end{equation}
We have two cases to consider - the case where $a\geq a^{\prime\prime}$
and the case where $a\ngeq a^{\prime\prime}$.

(i) Let $a\geq a^{\prime\prime}$. Then, since $a\geq a^{\prime\prime}\geq a_{_\varepsilon}$ and 
$a\geq a^{\prime\prime}\geq {\widehat{a}}_{_\varepsilon}$
we have
\begin{equation}
d(x_{_{\sigma}},x_{_\rho})\leq
d(x_{_{\sigma}},x_a)+
d(x_a,x_{_\rho})<\displaystyle{\epsilon\over 3}+\displaystyle{\epsilon\over 3}<\displaystyle{2\epsilon\over 3}<\epsilon,\
 \ {\rm whenever} \     \sigma\geq \sigma_{_{{\widehat{a}}_{_\varepsilon}}}      {\rm and}\ \rho\geq \rho_{\epsilon}. 
\label{eq:3072}
\end{equation}

(ii) Let $a\ngeq a^{\prime\prime}$. Then, since $a^{\prime\prime}\geq {\widehat{a}}_{_\varepsilon}$ and 
 $a, a^{\prime\prime}\geq a_{_\varepsilon}$ with $a\ngeq a^{\prime\prime}$ we have that
\begin{equation}
d(x_{_{\sigma}},x_{_\rho})\leq
d(x_{_{\sigma}},x_{_{a^{\prime\prime}}})+d(x_{_{a^{\prime\prime}}},x_a)+
d(x_a,x_{_\rho})<\displaystyle{\epsilon\over 3}+\displaystyle{\epsilon\over 3}+\displaystyle{\epsilon\over 3}=\epsilon,\
 \ {\rm whenever} \     \sigma\geq \sigma_{_{{\widehat{a}}_{_\varepsilon}}}      {\rm and}\ \rho\geq \rho_{\epsilon}. 
\label{eq:3073}
\end{equation}
Since $(x_{_\sigma})_{_{\sigma\in \Sigma}}\in \mathcal{A}_{_\xi}$, $(x_{_\rho})_{_{\rho\in P}}\in\mathcal{A}_{_{\phi(x_a)}}$,
then by (\ref{eq:3072}) and (\ref{eq:3073}) and Definition \ref{a21}, we conclude that
\begin{equation}
d(\xi,\phi(x_a))<\varepsilon\ \ {\rm for}\ \ a\geq a_{_\varepsilon}\ {\rm \ which\  implies\ that} \ 
\widehat{d}(\xi,\phi(x_{_a}))\rightarrow 0.
\label{eq:3074}
\end{equation}
Similarly we can prove that $\widehat{d}(\phi(x_{_\beta}),\xi)\rightarrow 0$.
\end{proof}

\begin{proposition}\label{a123}{\rm Let $(X,d)$ be a quasi-pseudometric space and let $(\xi_{_a})_{_{a\in A}}$ be a non-constant right $\widehat{d}_{_S}$-Cauchy net in 
$(\widehat{X},\widehat{d})$ without last element.  
Let also $a^{\ast}\in A$ be such that for each $a\geq a^{\ast}, a^{\prime}\geq
a^{\ast}$ and $a^{\prime}\ngeq a$ there holds $\widehat{d}(\xi_{_a},\xi_{_{a^{\prime}}})=0$.
Then,
there exists a right $d_{_S}$-Cauchy net $(t_{_\sigma})_{_{\sigma\in \Sigma}}$ in $(X,d)$ such that
$(\xi_{_a})_{_{a\in A}}$ and $(\phi(t_{_\sigma}))_{_{\sigma\in \Sigma}}$
are right $\widehat{d}$-cofinal
nets.}
\end{proposition}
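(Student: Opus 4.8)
The plan is to extract the desired net in $(X,d)$ by a \emph{diagonal construction} over the members of the classes $\mathcal{A}_{_{\xi_{_a}}}$. First I would fix, for each $a\in A$, a right $d_{_S}$-Cauchy net $(x^{a}_{_{\rho}})_{_{\rho\in P_{_a}}}\in\mathcal{A}_{_{\xi_{_a}}}$; by Definition \ref{a21} and Proposition \ref{a114}, $\widehat{d}(\xi_{_a},\xi_{_{a'}})=\lim_{\rho,\rho'}d(x^{a}_{_\rho},x^{a'}_{_{\rho'}})$ whenever $\mathcal{A}_{_{\xi_{_a}}}\neq\mathcal{A}_{_{\xi_{_{a'}}}}$, and equals $0$ when the classes agree. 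The directed set $\Sigma$ will be (a cofinal part of) $\{(a,\rho)\mid a\geq a^{\ast},\ \rho\in P_{_a}\}$ with the product-type order $(a,\rho)\leq(a',\rho')$ iff $a\leq a'$ and, crucially, a tail condition chosen so that $d(x^{a}_{_\rho},\,\cdot\,)$ is already $\varepsilon$-small; concretely I would use the hypothesis that the $\varepsilon$-indices can be taken monotone (as granted in Definition \ref{a1}) to thread a coherent choice. Set $t_{_{(a,\rho)}}=x^{a}_{_\rho}$.

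Next I would verify that $(t_{_\sigma})_{_{\sigma\in\Sigma}}$ is right $d_{_S}$-Cauchy. Given $\varepsilon>0$, use that $(\xi_{_a})_{_{a\in A}}$ is right $\widehat{d}_{_S}$-Cauchy to get $a_{_\varepsilon}\geq a^{\ast}$ with $\widehat{d}(\xi_{_a},\xi_{_{a'}})<\varepsilon/3$ for $a,a'\geq a_{_\varepsilon}$, $a'\ngeq a$. For indices $(a,\rho),(a',\rho')$ past the corresponding diagonal stage, bound $d(x^{a'}_{_{\rho'}},x^{a}_{_\rho})$ by $d(x^{a'}_{_{\rho'}},x^{a}_{_{\rho_0}})+d(x^{a}_{_{\rho_0}},x^{a}_{_\rho})$ where $\rho_0$ is chosen so the first term realizes the $\limsup$ defining $\widehat{d}(\xi_{_a},\xi_{_{a'}})$ (here I would invoke Proposition \ref{a8}, that a right $d_{_S}$-Cauchy net and its subnets are right $d$-cofinal, to make the choice of tails legitimate) and the second term is $<\varepsilon/3$ by right $d_{_S}$-Cauchyness of $(x^{a}_{_\rho})_{_\rho}$; when $\mathcal{A}_{_{\xi_{_a}}}=\mathcal{A}_{_{\xi_{_{a'}}}}$ the two nets are right $d$-cofinal by definition of a $d$-cut, so the same bound holds with $\widehat{d}=0$. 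This gives $d(t_{_{\sigma'}},t_{_\sigma})<\varepsilon$ on a tail, as required.

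Finally I would show $(\xi_{_a})_{_{a\in A}}$ and $(\phi(t_{_\sigma}))_{_{\sigma\in\Sigma}}$ are right $\widehat{d}$-cofinal. By Proposition \ref{szos}, $(x^{a}_{_\rho})_{_{\rho\in P_{_a}}}\in\mathcal{A}_{_{\xi_{_a}}}$ forces $\widehat{d}(\xi_{_a},\phi(x^{a}_{_\rho}))\to 0$ in $\rho$; since $\phi(t_{_{(a,\rho)}})=\phi(x^{a}_{_\rho})$, for each $a\geq a_{_\varepsilon}$ we can pick $\rho$ large with $\widehat{d}(\xi_{_a},\phi(t_{_{(a,\rho)}}))<\varepsilon$, which together with $\widehat{d}(\xi_{_a},\xi_{_{a'}})<\varepsilon/3$ on the tail yields, via the triangle inequality of Proposition \ref{zos}, that $(\phi(t_{_\sigma}))_{_{\sigma\in\Sigma}}$ is right $\widehat{d}$-cofinal to $(\xi_{_a})_{_{a\in A}}$. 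Conversely, again using Proposition \ref{szos} and the fact that the indices $(a,\rho)$ run cofinally through all $a\in A$, each $\xi_{_a}$ is approximated in $\widehat{d}$ by the $\phi(t_{_\sigma})$'s with first coordinate $a$, giving cofinality in the other direction. The main obstacle is organizing the directed set $\Sigma$ and the monotone choice of $\varepsilon$-indices so that all three tail estimates can be made \emph{simultaneously} coherent across the diagonal — once the bookkeeping of the order on $\Sigma$ is set up correctly, each individual estimate is a routine triangle-inequality argument of the kind already carried out in Propositions \ref{daf} and \ref{szos}.
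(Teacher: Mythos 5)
Your overall strategy---fix a representative net $(x^{a}_{_\rho})_{_{\rho\in P_{_a}}}\in\mathcal{A}_{_{\xi_{_a}}}$ for each $a$, diagonalize, and then run triangle-inequality estimates through Propositions \ref{a8}, \ref{a114} and \ref{szos}---is the same one the paper follows, but the step you set aside as ``bookkeeping'' is in fact the substance of the proof, and as written it is a genuine gap. Your index set $\Sigma\subseteq\{(a,\rho)\mid a\geq a^{\ast},\ \rho\in P_{_a}\}$ carries a ``product-type order'' in which comparing $(a,\rho)$ with $(a',\rho')$ requires comparing $\rho\in P_{_a}$ with $\rho'\in P_{_{a'}}$; these lie in unrelated directed sets, so the order is never actually defined, directedness of $\Sigma$ is never checked, and the monotonicity of $\varepsilon$-thresholds granted in Definition \ref{a1} only concerns a single net, not coherence between the different nets $(x^{a}_{_\rho})_{_\rho}$ and $(x^{a'}_{_{\rho'}})_{_{\rho'}}$. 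More seriously, in your verification that $(t_{_\sigma})$ is right $d_{_S}$-Cauchy the auxiliary index $\rho_{_0}$ is chosen \emph{after} the pair $(a,a')$ is fixed, whereas the definition of a right $d_{_S}$-Cauchy net demands one threshold $\sigma_{_\varepsilon}$ valid simultaneously for all later pairs of indices; making the tail choices coherent across all fibers at once is exactly the difficulty. The paper resolves it by extracting a cofinal well-ordered subset $\Gamma$ of $A$ (axiom of choice), using the hypothesis $\widehat{d}(\xi_{_a},\xi_{_{a'}})=0$ to produce, for every $\gamma>\gamma'$ and every $\epsilon>0$, thresholds making the representative tails $\epsilon/3$-close (inequality (\ref{eq:4})), and then proving by transfinite induction the $T(\gamma)$-property, i.e.\ that thresholds $\widetilde{\rho}_{\epsilon}({k^{\ast}}\!\!_{\gamma})$ can be chosen so that the estimate holds simultaneously for \emph{all} $\gamma'<\gamma$; only after this does the order on pairs $(\gamma,\epsilon)$ make the diagonal net $t_{_{(\gamma,\epsilon)}}=x_{_{\widetilde{\rho}_{\epsilon}({k^{\ast}}\!\!_{\gamma})}}$ right $d_{_S}$-Cauchy. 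Nothing in your sketch replaces this induction or the passage to a well-ordered cofinal subset.

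Two further points. First, you never actually use the hypothesis $\widehat{d}(\xi_{_a},\xi_{_{a'}})=0$ on the tail, only the Cauchy estimate $\widehat{d}(\xi_{_a},\xi_{_{a'}})<\varepsilon/3$; that stronger hypothesis is precisely what the special form of Proposition \ref{a123} provides (the general case being reduced to it, or to the sequence case, in Proposition \ref{125}), and it is what feeds the transfinite induction. Second, your cofinality argument has the quantifiers in the wrong order: ``for each $a\geq a_{_\varepsilon}$ pick $\rho$ large'' produces a threshold depending on $a$, whereas right $\widehat{d}$-cofinality of $(\phi(t_{_\sigma}))_{_{\sigma\in\Sigma}}$ to $(\xi_{_a})_{_{a\in A}}$ requires a single $\sigma_{_\varepsilon}\in\Sigma$ and then, for each later $\sigma$, a tail in $A$ on which $\widehat{d}(\xi_{_a},\phi(t_{_\sigma}))<\varepsilon$; moreover the reverse direction needs an estimate on $\widehat{d}(\phi(t_{_\sigma}),\xi_{_a})$, which Proposition \ref{szos} does not directly give for members of the first class. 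These last defects are repairable by triangle inequalities once the coherent thresholds exist, but as the proposal stands the central construction is asserted rather than carried out.
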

\begin{proof}
($\alpha$) {\it The Construction of the right $d_{_S}$-Cauchy net 
$(t_{_\sigma})_{_{\sigma\in \Sigma}}$ in $(X,d)$.}
Let $(\xi_{_a})_{_{a\in A}}$ and $a^{\ast}$ be as in the proposition.
Since $\widehat{d}(\xi_{_a},\xi_{_{a^{\prime}}})=0$, for each $\epsilon>0$ we have that 
\begin{equation}
\widehat{d}(\xi_{_a},\xi_{_{a^{\prime}}})<\epsilon\ {\rm whenever}\ 
a\geq a^{\ast},\ a^{\prime}\geq a^{\ast} \ {\rm and}\ a^{\prime}\ngeq a.
\label{eq:1}
\end{equation}

Let $(\Gamma,\leq)$ be cofinal well-ordered subset of $(A,\leq)$
(the existence of such $\Gamma$ follows from the axiom of choice).
Consider the subnet $(\xi_{_{a_{_{\gamma}}}})_{_{\gamma\in\Gamma}}=(\xi_{_\gamma})_{_{\gamma\in\Gamma}}$ of $(\xi_{_{a}})_{_{a\in A}}$.
Then, $(\xi_{_{\gamma}})_{_{\gamma\in\Gamma}}$ is cofinal to 
$(\xi_{_a})_{_{a\in A}}$ and by (\ref{eq:1}) we have that
\begin{equation}
\widehat{d}(\xi_{_{a_{_\gamma}}},\xi_{_{a_{_{\gamma^{\prime}}}}})<\epsilon\ {\rm whenever}\ 
\gamma\geq \gamma^{\prime}\geq a^{\ast}.\label{eq:2}
\end{equation}
For any $\gamma\in \Gamma$, let $\xi_{_\gamma}=(\mathcal{A}_{_{\xi_{_\gamma}}},\mathcal{B}_{_{\xi_{_\gamma}}})$
and 
$(x_{_{\rho(k_{_\gamma})}})_{_{\rho(k_{_\gamma})\in P_{_{k_{_{\gamma}}}}}}$
be a member of $\mathcal{A}_{_{\xi_{_\gamma}}}$ where 
$k_{_\gamma}$ denote the different right $d_{_S}$-Cauchy nets of 
$\mathcal{A_{_{\xi_{_\gamma}}}}$
and
$\rho(k_{_\gamma})$
denote the
indices sets of $k_{_\gamma}$.
Let also $\rho_{_\epsilon}(k_{_\gamma})$ be the smallest index with the property
\begin{equation}
d(x_{_{\rho(k_{_\gamma})}},x_{_{\rho^{\prime}(k_{_\gamma})}})<\displaystyle{\epsilon\over 3}\
 \ {\rm whenever} \ \rho(k_{_\gamma})\geq  \rho_{_\epsilon}(k_{_\gamma}),\
\rho^{\prime}(k_{_\gamma})\geq
 \rho_{_\epsilon}(k_{_\gamma})\ {\rm and}\  \rho^{\prime}(k_{_\gamma})\ngeq \rho(k_{_\gamma}).
  \label{eq:3}
\end{equation}

For each $\gamma\in \Gamma$ fix an 
$k^{\ast}_{_{\gamma}}\in P_{_{k^{\ast}_{_{\gamma}}}}$.
Without loss of generality (see Remark \ref{p12}), we can assume that:
\begin{equation}
{\rm For \ each} \ \epsilon^\prime\leq \epsilon\  
{\rm we\ have\ that}\  \rho_{_\epsilon}(k^{\ast}_{_{\gamma}})\leq\rho_{_{\epsilon^\prime}}
(k^{\ast}_{_{\gamma}}).
 \label{eq:45}
 \end{equation}
Let 
$\gamma>\gamma^{\prime}$ and let $\epsilon>0$. Then,
$\widehat{d}(\xi_{_\gamma},\xi_{_{\gamma^{\prime}}})=0$ implies that
there exists
$\rho^{\epsilon}({k^{\ast}_{_{\gamma}}})\in P_{_{k^{\ast}_{_{\gamma}}}}$,
$\rho^{{\epsilon^{\prime}}}({k^{\ast}_{_{\gamma^{\prime}}}})\in P_{_{k^{\ast}_{_{\gamma^{\prime}}}}}$ such that
\begin{equation}
d(x_{_{\rho(k^{\ast}_{_\gamma})}},x_{_{\rho(k^{\ast}_{_{{\gamma}^{\prime}}})}})<\displaystyle{\epsilon\over 3}
 \ {\rm whenever}\ \rho(k^{\ast}_{_\gamma})\geq \rho^{\epsilon}({k^{\ast}_{_{\gamma}}})
 \ {\rm and}\ \rho(k^{\ast}_{_{\gamma^{\prime}}})\geq \rho^{{\epsilon^{\prime}}}({k^{\ast}_{_{\gamma^{\prime}}}}).
  \label{eq:4}
 \end{equation}

We advance to the construction of the demanded right $d_{_S}$-Cauchy net 
$(t_{_\lambda})_{_{\lambda\in \Lambda}}$ in $(X,d)$ by using transfinite induction on the well-ordered set $(\Gamma,\leq)$.
Let $\gamma_{_1}\geq\gamma_{_0}\geq a^{\ast}$ for some $\gamma_{_0},\gamma_{_1} \in \Gamma$ and let $\epsilon>0$. Then, 
\begin{equation}
d(x_{_{\rho(k^{\ast}_{_{\gamma_{_1}}})}},x_{_{\rho(k^{\ast}_{_{\gamma_{_0}}})}})<\displaystyle{\epsilon\over 3}
 \ {\rm whenever}\ \rho(k^{\ast}_{{\gamma_{_1}}})\geq \rho^{\epsilon}({
 k^{\ast}_{{\gamma_{_1}}}})
 \ {\rm and}\ 
 \rho(k^{\ast}_{{\gamma_{_0}}})\geq \rho^{\epsilon}({
 k^{\ast}_{{\gamma_{_0}}}}).
 \end{equation}
Let $\widetilde{\rho}_{\epsilon}({k^{\ast}}\!\!_{{\gamma_{_0}}})=\rho_{_{\epsilon}}({
 k^{\ast}_{{\gamma_{_0}}}})$ and 
$\widetilde{\rho}_{\epsilon}({k^{\ast}}\!\!_{{\gamma_{_1}}})=min\{
\rho_{_\epsilon}({
 k^{\ast}_{{\gamma_{_1}}}}),
\rho^{\epsilon}({
 k^{\ast}_{{\gamma_{_1}}}})\}$. Then, 
\begin{equation}
d(x_{_{\rho(k^{\ast}_{_{\gamma_{_1}}})}},x_{_{\rho(k^{\ast}_{_{\gamma_{_0}}})}})<\displaystyle{2\epsilon\over 3}<\epsilon
 \ {\rm whenever}\ \rho(k^{\ast}_{{\gamma_{_1}}})\geq 
 \widetilde{\rho}_{\epsilon}({k^{\ast}}\!\!_{{\gamma_{_1}}})
 \ {\rm and}\ 
 \rho(k^{\ast}_{{\gamma_{_0}}})\geq \widetilde{\rho}_{\epsilon}({k^{\ast}}\!\!_{{\gamma_{_0}}}). 
  \label{eq:6}
 \end{equation}
 We call equation (\ref{eq:6}), $T(1)$-Property.

Let $\gamma_{_2}\in \Gamma$ such that $\gamma_{_2}>\gamma_{_1}>\gamma_{_0}$.
Then, if $\widetilde{\rho}_{\epsilon}({k^{\ast}}\!\!_{{\gamma_{_2}}})=min\{
\rho_{_\epsilon}({
 k^{\ast}_{{\gamma_{_2}}}}),
\rho^{\epsilon}({
 k^{\ast}_{{\gamma_{_2}}}})\}$, as in (\ref{eq:6}), it is easy to check that

\begin{equation}
d(x_{_{\rho(k^{\ast}_{_{\gamma_{_2}}})}},x_{_{\rho(k^{\ast}_{_{\gamma_{_1}}})}})<\epsilon\ {\rm and}\ 
d(x_{_{\rho(k^{\ast}_{_{\gamma_{_2}}})}},x_{_{\rho(k^{\ast}_{_{\gamma_{_0}}})}})<\epsilon\\
 \ {\rm whenever}\ \rho(k^{\ast}_{{\gamma_{_2}}})\geq 
 \widetilde{\rho}_{\epsilon}({k^{\ast}}\!\!_{{\gamma_{_2}}}),\
 \rho(k^{\ast}_{{\gamma_{_1}}})\geq 
 \widetilde{\rho}_{\epsilon}({k^{\ast}}\!\!_{{\gamma_{_1}}})
 \ {\rm and}\ 
 \rho(k^{\ast}_{{\gamma_{_0}}})\geq \widetilde{\rho}_{\epsilon}({k^{\ast}}\!\!_{{\gamma_{_0}}}). 
  \label{eq:7}
   \end{equation}
 We call equation (\ref{eq:7}), $T(2)$-Property.

Let $\gamma\in \Gamma$ be an ordinal which has the $T(\gamma)$-Property, that is:
For each $\gamma^{\prime}<\gamma$ we have
\begin{equation}
d(x_{_{\rho(k^{\ast}_{{\gamma}})}}, x_{_{\rho(k^{\ast}_{{\gamma^{\prime}}})}})<\epsilon\ {\rm whenever}\ \rho(k^{\ast}_{{\gamma}})\geq 
 \widetilde{\rho}_{\epsilon}({k^{\ast}}\!\!_{{\gamma}})\ {\rm and}\
 \rho(k^{\ast}_{{\gamma^{\prime}}})\geq 
 \widetilde{\rho}_{\epsilon}({k^{\ast}}\!\!_{{\gamma^{\prime}}}). 
  \label{eq:8}
   \end{equation}

According to transfinite induction, if $T(\gamma)$ is true whenever $T(\gamma^{\prime})$ is true for all $\gamma^{\prime}< \gamma$, then $T(\gamma)$ is true for all $\gamma$.

We follow two steps:
\par
(i) {\it Successor case:} Prove that for any successor ordinal $\gamma+1$, 
$T(\gamma+1)$ follows from $T(\gamma)$.
\par
(ii) {\it Limit case:} Prove that for any limit ordinal $\gamma$, $T(\gamma)$ follows from [$T(\gamma^{\prime})$ for all $\gamma^{\prime}< \gamma]$.
\par\smallskip\noindent
{\it Step (i)}. Let $\gamma$ be a successor ordinal. Let also $\delta$ be an ordinal such that $\gamma<\delta$. If $\delta$ is a limit ordinal, then there exists an ordinal $\zeta^{\prime}$ such that 
$\gamma<\zeta^{\prime}<\delta$, otherwise, there exists an ordinal $\zeta$ such that $\gamma<\delta<\zeta$.
We only prove 
that $T(\gamma+1)$ follows from $T(\gamma)$ where $\gamma+1=\zeta$ (the case
$\gamma<\zeta^{\prime}<\delta$ is similar for $\gamma+1=\delta$).

By repeating the steps (\ref{eq:4})-(\ref{eq:7}) above for $\gamma^{\prime},\delta,\zeta$ where
$\gamma^{\prime}\leq\gamma$,
instead of 
$\gamma_{_0}, \gamma_{_1}, \gamma_{_2}$, considering them with
same layout, we conclude that
\begin{equation}
d(x_{_{\rho(k^{\ast}_{{\zeta}})}}, x_{_{\rho(k^{\ast}_{{\gamma^{\prime}}})}})<\epsilon\ {\rm whenever}\ \rho(k^{\ast}_{{\zeta}})\geq 
 \widetilde{\rho}_{\epsilon}({k^{\ast}}\!\!_{{\zeta}})\ {\rm and}\
 \rho(k^{\ast}_{{\gamma}})\geq 
 \widetilde{\rho}_{\epsilon}({k^{\ast}}\!\!_{{\gamma^{\prime}}}). 
  \label{eq:9}
   \end{equation}

Therefore, $T(\gamma+1)=T(\zeta)$ holds.

\par\smallskip\par\noindent
{\it Step (ii)}. Let $\gamma$ be a limit ordinal. As it is well-known, an 
ordinal $\gamma$ is a limit ordinal if and only if there is an ordinal less than $\gamma$, and whenever 
$\gamma^{\prime}$ is an ordinal less than $\gamma$, then there exists an ordinal $\gamma^{\prime\prime}$ such that 
$\gamma^{\prime}<\gamma^{\prime\prime}<\gamma$.
Therefore,
by repeating the steps (\ref{eq:4})-(\ref{eq:7}) above for $\gamma^{\prime},\gamma^{\prime\prime},\gamma$ 
instead of 
$\gamma_{_0}, \gamma_{_1}, \gamma_{_2}$, considering them with
same layout, we conclude that
\begin{equation}
d(x_{_{\rho(k^{\ast}_{{\gamma}})}}, x_{_{\rho(k^{\ast}_{{\gamma^{\prime}}})}})<\epsilon\ {\rm whenever}\ \rho(k^{\ast}_{{\gamma}})\geq 
 \widetilde{\rho}_{\epsilon}({k^{\ast}}\!\!_{{\gamma}})\ {\rm and}\
 \rho(k^{\ast}_{{\gamma^{\prime}}})\geq 
 \widetilde{\rho}_{\epsilon}({k^{\ast}}\!\!_{{\gamma^{\prime}}}). 
  \label{eq:9}
   \end{equation}
Therefore, $T(\gamma)$ holds. It follows that $T(\gamma)$ is true for all $\gamma\in \widehat{\Gamma}$,
where $\widehat{\Gamma}$ is cofinal to $\Gamma$.

Let $\widetilde{A}=\{(\gamma,\epsilon)\vert \ \gamma\in \widehat{\Gamma}, \ \epsilon>0\}$. 
We define an order on $\widetilde{A}$ as follows:

\begin{equation}
(\gamma^{\prime},\epsilon^{\prime})\leq (\gamma,\epsilon)\ {\rm if\ and\ only\ if}\ 
{\rm (i)}\ \gamma^{\prime}<\gamma \ {\rm or}\ {\rm (ii)}\ \gamma^{\prime}=\gamma\ 
{\rm and}\ \epsilon<\epsilon^{\prime}.
\label{eq:10}
\end{equation}

Clearly, $\widetilde{A}$ is directed with respect to $\leq$. Define the net
$(x_{_{\widetilde{\rho}_{\epsilon}({k^{\ast}}\!\!\!\!_{_{\gamma}})}})_{_{(\gamma,\epsilon)\in \widetilde{A}}}$.
By (\ref{eq:45}), for each $\epsilon^{\prime}<\epsilon$, we have $\widetilde{\rho}_{\epsilon}({k^{\ast}}\!\!\!_{{\gamma}})\leq \widetilde{\rho}_{\epsilon^{\prime}}({k^{\ast}}\!\!\!_{{\gamma}})$. 
Therefore, by (\ref{eq:45}) and $T(\gamma)$ property we conclude that
$(x_{_{\widetilde{\rho}_{\epsilon}({k^{\ast}}\!\!\!\!_{_{\gamma}})}})_{_{(\gamma,\epsilon)\in \widetilde{A}}}$
is a right $d_{_S}$-Cauchy net.
We now prove that the right $\widehat{d}_{_S}$-Cauchy nets
$(\xi_{_a})_{_{a\in A}}$ and
$(\phi(x_{_{\widetilde{\rho}_{\epsilon}({k^{\ast}}\!\!\!\!_{_{\gamma}})}}))_{_{(\gamma,\epsilon)\in \widetilde{A}}}$
are
left $\widehat{d}$-cofinal.
Since $(\xi_{_a})_{_{a\in A}}$ and $(\xi_{_\gamma})_{_{\gamma\in \Gamma}}$ are
left $\widehat{d}$-cofinal and 
$(\xi_{_\gamma})_{_{\gamma\in \Gamma}}$ and $(\xi_{_\gamma})_{_{\gamma\in \widehat{\Gamma}}}$
are
left $\widehat{d}$-cofinal, to prove that $(\xi_{_a})_{_{a\in A}}$ and 
$(\phi(x_{_{\widetilde{\rho}_{\epsilon}({k^{\ast}}\!\!\!\!_{_{\gamma}})}}))_{_{(\gamma,\epsilon)\in \widetilde{A}}}$
are
left $\widehat{d}$-cofinal, we have to prove that 
$(\xi_{_\gamma})_{_{\gamma\in \widehat{\Gamma}}}$ and 
$(\phi(x_{_{\widetilde{\rho}_{\epsilon}({k^{\ast}}\!\!\!\!_{_{\gamma}})}}))_{_{(\gamma,\epsilon)\in \widetilde{A}}}$
are
right $\widehat{d}$-cofinal.

In order to prove that 
$(\xi_{_\gamma})_{_{\gamma\in \widehat{\Gamma}}}$ and 
$(\phi(x_{_{\widetilde{\rho}_{\epsilon}({k^{\ast}}\!\!\!\!_{_{\gamma}})}}))_{_{(\gamma,\epsilon)\in \widetilde{A}}}$
are
right $\widehat{d}_{_S}$-cofinal, let $\epsilon^{\ast}>0$ and $\gamma_{_{\epsilon^{\ast}}}>a^{\ast}$.
By construction, we have 
\begin{equation}
d(x_{_{\widetilde{\rho}_{\epsilon}({k^{\ast}}\!\!\!\!_{_{\gamma}})}},
x_{_{\rho({k^{\ast}}\!\!\!\!_{_{\gamma_{_{\epsilon^{\ast}}}}})}})<\displaystyle{\epsilon\over 3},\
\ (\gamma,\epsilon)\in\widetilde{A}, \gamma>\gamma_{_{\epsilon^{\ast}}}, 
\ \epsilon>0\ \ {\rm and}\
 \rho(k^{\ast}_{{\gamma_{_{\epsilon^{\ast}}}}})\geq 
 \widetilde{\rho}_{\epsilon}({k^{\ast}}\!\!_{{\gamma_{_{\epsilon^{\ast}}}}}). 
\label{eq:1010}
\end{equation}
Since each
$(x_{_{\rho(k_{_{\gamma_{_{\epsilon^{\ast}}}}})}})_{_{
\rho(k_{_{\gamma_{_{\epsilon^{\ast}}}}})\in P_{_{k_{_{{\gamma_{_{\epsilon^{\ast}}}}}}}}}}\!\!\!\!\!\in
\mathcal{A}_{_{\xi_{_{\gamma_{_{\epsilon^{\ast}}}}}}}$ is right $d_{_S}$-cofinal to 
$(x_{_{\rho(k^{\ast}_{_{\gamma_{_{\epsilon^{\ast}}}}})}})_{_{
\rho(k^{\ast}_{_{\gamma_{_{\epsilon^{\ast}}}}})\in P_{_{k^{\ast}_{_{{\gamma_{_{\epsilon^{\ast}}}}}}}}}}\!\!\!\!\!\in
\mathcal{A}_{_{\xi_{_{\gamma_{_{\epsilon^{\ast}}}}}}}$
we have that
\begin{equation}
d(x_{_{\rho({k^{\ast}}\!\!\!\!_{_{\gamma_{_{\epsilon^{\ast}}}}})}},
x_{_{\rho({k}_{_{\gamma_{_{\epsilon^{\ast}}}}})}})<\displaystyle{\epsilon\over 3},\
 \ {\rm whenever}\
 \rho(k^{\ast}_{{\gamma_{_{\epsilon^{\ast}}}}})\geq  \rho_{_0}(k^{\ast}_{{\gamma_{_{\epsilon^{\ast}}}}})
  \ {\rm and }\ 
   \rho(k_{{\gamma_{_{\epsilon^{\ast}}}}})\geq  \rho_{_0}(k_{{\gamma_{_{\epsilon^{\ast}}}}}) . 
\label{eq:1011}
\end{equation}
It follows that 
\begin{equation}
d(x_{_{\widetilde{\rho}_{\epsilon}({k^{\ast}}\!\!\!\!_{_{\gamma}})}},
x_{_{\rho({k}_{_{\gamma_{_{\epsilon^{\ast}}}}})}})<\displaystyle{2\epsilon\over 3}< \epsilon,\
 \ {\rm whenever}\
 \rho(k^{\ast}_{{\gamma_{_{\epsilon^{\ast}}}}})\geq  \rho_{_0}(k^{\ast}_{{\gamma_{_{\epsilon^{\ast}}}}})
  \ {\rm and }\ 
   \rho(k_{{\gamma_{_{\epsilon^{\ast}}}}})\geq  \rho_{_0}(k_{{\gamma_{_{\epsilon^{\ast}}}}}) . 
\label{eq:1012}
\end{equation}
which implies that 
\begin{equation}
d(\phi(x_{_{\widetilde{\rho}_{\epsilon}({k^{\ast}}\!\!\!\!_{_{\gamma}})}}),
\xi_{_{\gamma_{_{\epsilon^{\ast}}})}}
< \epsilon,\
\ {\rm whenever}\
(\gamma, \epsilon)>(\gamma_{_{\epsilon^{\ast}}}, \epsilon^{\ast})
\ {\rm and }\ 
\rho(k_{{\gamma_{_{\epsilon^{\ast}}}}})\geq  \rho_{_0}(k_{{\gamma_{_{\epsilon^{\ast}}}}}) . 
\label{eq:1012}
\end{equation}
Therefore
$(\phi(x_{_{\widetilde{\rho}_{\epsilon}({k^{\ast}}\!\!\!\!_{_{\gamma}})}}))_{_{(\gamma,\epsilon)\in \widetilde{A}}}$
is
right $\widehat{d}_{_S}$-cofinal
to
$(\xi_{_\gamma})_{_{\gamma\in \widehat{\Gamma}}}$. On the other hand,
in view of Definition \ref{oly} and by the construction of
$x_{_{\widetilde{\rho}_{\epsilon}({k^{\ast}}\!\!\!\!_{_{\gamma}})}}$
immediately follows that 
$(\xi_{_\gamma})_{_{\gamma\in \widehat{\Gamma}}}$
right $\widehat{d}_{_S}$-cofinal
to
$(\phi(x_{_{\widetilde{\rho}_{\epsilon}({k^{\ast}}\!\!\!\!_{_{\gamma}})}}))_{_{(\gamma,\epsilon)\in \widetilde{A}}}$.
Therefore, the required net $(t_{_\sigma})_{_{\sigma\in\Sigma}}$ of the hypothesis is the 
net $(x_{_{\widetilde{\rho}_{\epsilon}({k^{\ast}}\!\!\!\!_{_{\gamma}})}})_{_{(\gamma,\epsilon)\in \widetilde{A}}}$.
\end{proof}

If the cardinality of index set $A$ in Proposition \ref{a123}
equals to the cardinality
$\aleph _{0}$ of the set of all natural numbers, then we have the following corollary (see also \cite[Proposition 25]{and1}).

\begin{corollary}\label{124}{\rm Let $(X,d)$ be a quasi-pseudometric space and let 
$(\xi_{_n})_{_{n\in \mathbb{N}}}$ be a non-constant right 
$K$-Cauchy sequence in 
$(\widehat{X},\widehat{d})$ without last element. 
Then,
there exists a right $K$-Cauchy sequence $(t_{_\nu})_{_{\nu\in \mathbb{N}}}$ in $(X,d)$ such that 
the sequences
$(\xi_{_n})_{_{n\in \mathbb{N}}}$ and $(\phi(t_{_\nu}))_{_{\nu\in \mathbb{N}}}$
are right $\widehat{d}$-cofinal
sequences.}
\end{corollary}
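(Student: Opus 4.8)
The plan is to read this off from Proposition \ref{a123} by taking $A=\mathbb{N}$ and re-running that construction with the simplifications the countable index set permits; this recovers \cite[Proposition 25]{and1}. First note that for sequences a right $K$-Cauchy sequence and a right $\widehat{d}_{_S}$-Cauchy net are the same thing (the sentence following Definition \ref{a1}), and that a sequence indexed by $\mathbb{N}$ has no last element, so $(\xi_{_n})_{_{n\in\mathbb{N}}}$ is eligible as the net of Proposition \ref{a123} except for the extra clause about the element $a^{\ast}$. That clause is not needed in the sequence case: right $K$-Cauchyness of $(\xi_{_n})_{_{n\in\mathbb{N}}}$ provides, for every $\epsilon>0$, an index $N_{_\epsilon}$ with $\widehat{d}(\xi_{_n},\xi_{_m})<\epsilon$ whenever $n\geq m\geq N_{_\epsilon}$, which is exactly inequality (\ref{eq:1}) of that proof with $N_{_\epsilon}$ in place of $a^{\ast}$; since every estimate there is carried out for a single fixed $\epsilon$ and absorbs an additive error (the recurring $\epsilon/3$-splitting), one loses nothing by letting the threshold depend on $\epsilon$. (Equivalently and even more cheaply, one may first pass to a subsequence of $(\xi_{_n})_{_{n\in\mathbb{N}}}$ along which the $a^{\ast}$-condition holds verbatim; this is harmless for the conclusion, since a subsequence of a right $\widehat{d}_{_S}$-Cauchy net is right $\widehat{d}$-cofinal to it by Proposition \ref{a8}.)

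Next I would run the construction of Proposition \ref{a123} with $\Gamma=\widehat{\Gamma}=\mathbb{N}$ (already well ordered and cofinal in itself). The transfinite induction there collapses to ordinary induction on $\mathbb{N}$: there are no limit ordinals below $\omega$, so the limit case (Step (ii)) is vacuous and only the successor step (Step (i)), i.e.\ the passage from the $T(k)$-Property to the $T(k+1)$-Property via the triangle-inequality estimates (\ref{eq:4})--(\ref{eq:7}), is used. For the index set, restrict the parameter $\epsilon$ to $\{1/k\mid k\in\mathbb{N}\}$, so that $\widetilde{A}=\mathbb{N}\times\{1/k\mid k\in\mathbb{N}\}$ with the order (\ref{eq:10}) is a countable, totally ordered set with no greatest element. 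Exactly as in Proposition \ref{a123}, using the monotonicity (\ref{eq:45}) together with the $T(k)$-Properties, the assignment $(\gamma,\epsilon)\mapsto x_{\widetilde{\rho}_{\epsilon}(k^{\ast}_{\gamma})}$ defines a right $d_{_S}$-Cauchy net $(x_{\widetilde{\rho}_{\epsilon}(k^{\ast}_{\gamma})})_{(\gamma,\epsilon)\in\widetilde{A}}$ in $(X,d)$ for which $(\phi(x_{\widetilde{\rho}_{\epsilon}(k^{\ast}_{\gamma})}))_{(\gamma,\epsilon)\in\widetilde{A}}$ is right $\widehat{d}$-cofinal to $(\xi_{_n})_{_{n\in\mathbb{N}}}$.

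Finally I would turn this into an honest sequence. Since $\widetilde{A}$ is countable, totally ordered, and has no greatest element, it contains a strictly increasing cofinal sequence $(\gamma_{_\nu},\epsilon_{_\nu})_{_{\nu\in\mathbb{N}}}$; put $t_{_\nu}=x_{\widetilde{\rho}_{\epsilon_{_\nu}}(k^{\ast}_{\gamma_{_\nu}})}$. Then $(t_{_\nu})_{_{\nu\in\mathbb{N}}}$ is a cofinal subsequence of a right $d_{_S}$-Cauchy net over a totally ordered index set, hence itself right $K$-Cauchy; and $(\phi(t_{_\nu}))_{_{\nu\in\mathbb{N}}}$ is a cofinal subsequence of a net that is right $\widehat{d}$-cofinal to $(\xi_{_n})_{_{n\in\mathbb{N}}}$, hence itself right $\widehat{d}$-cofinal to $(\xi_{_n})_{_{n\in\mathbb{N}}}$ (directly from the definition of right $\widehat{d}$-cofinality, the required thresholds being obtained by composing with the cofinal map; cf.\ Proposition \ref{a8}). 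This $(t_{_\nu})_{_{\nu\in\mathbb{N}}}$ is the sequence asserted by the corollary. I do not expect a genuine obstacle: the argument is entirely the bookkeeping of Proposition \ref{a123} with transfinite induction replaced by induction on $\mathbb{N}$. The only two points deserving a line of care are the two flagged above, namely that right $K$-Cauchyness of $(\xi_{_n})_{_{n\in\mathbb{N}}}$ legitimately replaces the single-$a^{\ast}$ hypothesis of Proposition \ref{a123}, and that restricting $\epsilon$ to $\{1/k\mid k\in\mathbb{N}\}$ and then passing to a cofinal subsequence of the resulting countable net preserves both right $K$-Cauchyness and right $\widehat{d}$-cofinality with $(\xi_{_n})_{_{n\in\mathbb{N}}}$.
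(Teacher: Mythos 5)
Your proposal follows essentially the same route as the paper: the paper's own proof of Corollary \ref{124} simply applies Proposition \ref{a123} with $A=\mathbb{N}$, takes the well-ordered cofinal set $\Gamma$ inside $\mathbb{N}$, and re-indexes the resulting net $(x_{\widetilde{\rho}_{\epsilon}(k^{\ast}_{\gamma})})_{(\gamma,\epsilon)\in\widetilde{A}}$ as a sequence. Your two points of extra care are in fact improvements on the paper's write-up: the paper never reconciles the corollary's hypotheses with the additional assumption in Proposition \ref{a123} that $\widehat{d}(\xi_{a},\xi_{a^{\prime}})=0$ beyond some $a^{\ast}$ (and the corollary is later invoked, in case (ii) of the proof of Proposition \ref{125}, for sequences that need not satisfy it), nor does it justify treating $\widetilde{A}$ --- whose second coordinate ranges over all $\epsilon>0$ --- as a subset of $\mathbb{N}$; your rerun of the construction with $\epsilon$-dependent thresholds, and your restriction of $\epsilon$ to $\{1/k\mid k\in\mathbb{N}\}$ followed by extraction of a strictly increasing cofinal sequence, supply exactly what the paper glosses over. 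One caveat: your parenthetical shortcut is false --- a right $K$-Cauchy sequence in $(\widehat{X},\widehat{d})$ need not admit a subsequence on which $\widehat{d}$ vanishes on all pairs of later terms (Cauchyness gives distances tending to $0$, not equal to $0$), so the $a^{\ast}$-condition cannot in general be arranged verbatim by passing to a subsequence; drop that remark and rely on your main argument, which is sound.
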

\begin{proof}
Let 
 let 
$(\xi_{_n})_{_{n\in \mathbb{N}}}$ be a non-constant right 
$K$-Cauchy sequence in 
$(\widehat{X},\widehat{d})$ without last element and let
$(x_{_{\widetilde{\rho}_{\epsilon}({k^{\ast}}\!\!\!\!_{_{\gamma}})}})_{_{(\gamma,\epsilon)\in \widetilde{A}}}$
be as in Proposition \ref{a123}. Since 
$(\xi_{_n})_{_{n\in \mathbb{N}}}$ is a sequence we have that $\Gamma\subseteq \mathbb{N}$ and
$\widetilde{A}\subseteq \mathbb{N}$. In this case we put
$(x_{_{\widetilde{\rho}_{\epsilon}({k^{\ast}}\!\!\!\!_{_{\gamma}})}})_{_{(\gamma,\epsilon)\in \widetilde{A}}}=
(x_{_{\widetilde{\rho}_{\epsilon}({k^{\ast}}\!\!\!\!_{_{{\gamma}(n)}})}})_{_{(\gamma(n),\epsilon)\in \mathbb{N}}}
$.
We define an order on $\mathbb{N}$ as follows:
\begin{equation}
(\gamma(n^{\prime}),\epsilon^{\prime})\leq (\gamma(n),\epsilon)\ {\rm if\ and\ only\ if}\ 
{\rm (i)}\ n^{\prime}<n \ {\rm or}\ {\rm (ii)}\ n^{\prime}=n\ 
{\rm and}\ \epsilon<\epsilon^{\prime}.
\label{eq:10}
\end{equation}
Clearly, $\widetilde{\mathbb{N}}=\{(\gamma(n),\epsilon)\vert \gamma(n)\in \mathbb{N}, \epsilon>0\}\subseteq \mathbb{N}$ 
is 
a linearly ordered set
with respect to $\leq$. 
Define the sequence 
$(x_{_{\widetilde{\rho}_{\epsilon}({k^{\ast}}\!\!\!\!_{_{{\gamma}(n)}})}})_{_{(\gamma(n),\epsilon)\in \mathbb{N}}}
$.
By Proposition \ref{a123} we have that
$(\xi_{_n})_{_{n\in \mathbb{N}}}$
and 
$(\phi(x_{_{\widetilde{\rho}_{\epsilon}({k^{\ast}}\!\!\!\!_{_{{\gamma}(n)}})}}))_{_{(\gamma(n),\epsilon)\in \mathbb{N}}}$
are
right $\widehat{d}$-cofinal sequences.
\end{proof}

\begin{proposition}\label{125}{\rm Let $(X,d)$ be a quasi-pseudometric space and let $(\xi_{_a})_{_{a\in A}}$ be a non-constant right $\widehat{d}_{_S}$-Cauchy net in 
$(\widehat{X},\widehat{d})$ without last element.
Then,
there exists a right $d_{_S}$-Cauchy net $(t_{_\sigma})_{_{\sigma\in \Sigma}}$ in $(X,d)$ such that the nets
$(\xi_{_a})_{_{a\in A}}$ and $(\phi(t_{_\sigma}))_{_{\sigma\in \Sigma}}$
are right $\widehat{d}$-cofinal
nets.}
\end{proposition}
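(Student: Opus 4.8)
The plan is to rerun the transfinite-induction construction from the proof of Proposition~\ref{a123}; the only hypothesis we now lack is the index $a^{\ast}$ past which all terms of the net sit at $\widehat{d}$-distance zero. In that proof the equality $\widehat{d}(\xi_\gamma,\xi_{\gamma'})=0$ was used only at equation~(\ref{eq:4}), to produce (via Definition~\ref{a21}(ii)) the estimate $d(x_{\rho(k^{\ast}_\gamma)},x_{\rho(k^{\ast}_{\gamma'})})<\epsilon/3$ on sufficiently late terms of the chosen representative nets. First I would replace this by the estimate that remains available: since $(\xi_a)_{a\in A}$ is right $\widehat{d}_S$-Cauchy, for each $\epsilon>0$ there is $\gamma_\epsilon$ with $\widehat{d}(\xi_\gamma,\xi_{\gamma'})<\epsilon$ whenever $\gamma\geq\gamma'\geq\gamma_\epsilon$ and $\gamma'\ngeq\gamma$, and then Definition~\ref{a21}(ii) gives $d(x_{\rho(k^{\ast}_\gamma)},x_{\rho(k^{\ast}_{\gamma'})})<\epsilon+\epsilon'$ on sufficiently late terms. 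After rescaling the tolerance, this estimate plays the same structural role in the induction as the old one.

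The steps in order. First, pass to a cofinal well-ordered $\Gamma\subseteq A$ and the subnet $(\xi_\gamma)_{\gamma\in\Gamma}$; by Proposition~\ref{a8} applied in $(\widehat{X},\widehat{d})$ this subnet is right $\widehat{d}$-cofinal to $(\xi_a)_{a\in A}$, and right $\widehat{d}$-cofinality composes, so it suffices to build a right $d_S$-Cauchy net in $X$ that is right $\widehat{d}$-cofinal to $(\xi_\gamma)_{\gamma\in\Gamma}$. Second, for each $\gamma$ fix $\xi_\gamma=(\mathcal{A}_{\xi_\gamma},\mathcal{B}_{\xi_\gamma})$, a representative right $d_S$-Cauchy net $\bigl(x_{\rho(k^{\ast}_\gamma)}\bigr)$ in $\mathcal{A}_{\xi_\gamma}$, and the monotone choice of moduli~(\ref{eq:45}). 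Third, carry out the transfinite induction over $\Gamma$, mirroring the successor and limit steps of the proof of Proposition~\ref{a123} with the weakened link estimate above in place of every use of the zero estimate, to select indices $\widetilde{\rho}_\epsilon(k^{\ast}_\gamma)$ for which the net $\bigl(x_{\widetilde{\rho}_\epsilon(k^{\ast}_\gamma)}\bigr)_{(\gamma,\epsilon)\in\widetilde{A}}$, with $\widetilde{A}$ ordered as in~(\ref{eq:10}), is right $d_S$-Cauchy. Fourth, repeat the closing part of that proof, equations~(\ref{eq:1010})--(\ref{eq:1012}), to see that this net and $(\xi_\gamma)_{\gamma\in\widehat{\Gamma}}$ are right $\widehat{d}$-cofinal in both directions; hence $(\phi(t_\sigma))$ and $(\xi_a)_{a\in A}$ are right $\widehat{d}$-cofinal, with $(t_\sigma)_{\sigma\in\Sigma}$ the net just constructed.

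The hard part is the error accounting in the third step, now that no link comes for free. The induction hypothesis $T(\gamma)$ asserts $d(x_{\rho(k^{\ast}_\gamma)},x_{\rho(k^{\ast}_{\gamma'})})<\epsilon$ for \emph{every} $\gamma'<\gamma$, so one must never accumulate link costs along a long stretch of ordinals. The resolution is that the three-point arguments~(\ref{eq:4})--(\ref{eq:7}) and~(\ref{eq:9}) already route every comparison through a single intermediate index, so each instance of $T(\gamma)$ costs one weakened link estimate (size $\epsilon$) plus two uses of $d_S$-Cauchyness of the representative nets (total size $2\epsilon/3$), which after an initial rescaling of $\epsilon$ stays within the prescribed tolerance. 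Two points deserve a check: that the routing still works at limit ordinals, which it does since every limit ordinal $\gamma$ admits an intermediate $\gamma''$ with $\gamma'<\gamma''<\gamma$; and that the monotonicity~(\ref{eq:45}) ensuring $\bigl(x_{\widetilde{\rho}_\epsilon(k^{\ast}_\gamma)}\bigr)$ is a genuine net, not merely a family, is preserved by the weakened construction.
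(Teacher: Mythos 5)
Your route is genuinely different from the paper's, and it has a gap at exactly the point you call ``the hard part.'' In Proposition~\ref{a123} the hypothesis is not merely that the links are small but that there is a \emph{single} index $a^{\ast}$, independent of $\epsilon$, beyond which $\widehat{d}(\xi_{a},\xi_{a'})=0$ for every admissible pair; this uniformity is what makes the statement $T(\gamma)$ (``for \emph{every} $\gamma'<\gamma$ the representatives are eventually $\epsilon$-close'') even potentially true and what allows one family of indices $\widetilde{\rho}_{\epsilon}(k^{\ast}_{\gamma})$ to be chosen for all $(\gamma,\epsilon)$ at once. Under your weakened hypothesis the threshold $\gamma_{\epsilon}$ depends on $\epsilon$, and for $\gamma'$ below that threshold there is \emph{no} estimate at all: by Proposition~\ref{a114}, $\lim d\bigl(x_{\rho(k^{\ast}_{\gamma})},x_{\rho(k^{\ast}_{\gamma'})}\bigr)=\widehat{d}(\xi_{\gamma},\xi_{\gamma'})$, which may exceed $\epsilon$, so $T(\gamma)$ as you keep it is simply false for such pairs, no matter how the indices are chosen. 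Your proposed resolution (routing every comparison through one intermediate index) addresses accumulation of costs, not the absence of a link; moreover, with nonzero links a two-leg route costs \emph{two} link estimates plus the Cauchy corrections, so the claimed budget ``one link of size $\epsilon$ plus $2\epsilon/3$'' does not cover the routed comparisons either. To repair the plan you would have to reformulate the induction statement tail-by-tail (restrict $T(\gamma)$ to $\gamma'\geq\gamma_{\epsilon}$, with $\epsilon$-dependent thresholds), and then re-verify that this weaker property still yields the right $d_{_S}$-Cauchyness of the net over $\widetilde{A}$ and \emph{both} directions of the $\widehat{d}$-cofinality; none of this is carried out, and it is substantially more delicate than the original argument you are imitating.

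For comparison, the paper avoids re-running the construction altogether. It extracts indices $a_{n}$ with $\widehat{d}(\xi_{a},\xi_{a'})\leq 2^{-n}$ on the tail above $a_{n}$ and distinguishes two cases: if some $a^{\ast}$ dominates all $a_{n}$, then beyond $a^{\ast}$ all admissible pairs satisfy $\widehat{d}(\xi_{a},\xi_{a'})=0$ and Proposition~\ref{a123} applies verbatim; otherwise $(\xi_{a_{n}})_{n\in\mathbb{N}}$ is a subsequence which is right $\widehat{d}$-cofinal to $(\xi_{a})_{a\in A}$ by Proposition~\ref{a8}, and Corollary~\ref{124} (the sequential case) produces the required $(t_{\nu})$, after which cofinality transfers by composition. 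If you want a complete proof, this dichotomy is the missing idea: it reduces the general right $\widehat{d}_{_S}$-Cauchy net either to the zero-distance situation you were trying to generalize or to a sequence, rather than redoing the transfinite induction with weakened estimates.
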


\begin{proof} Let $(\widehat{X}, \widehat{d})$ and $(\xi_{_a})_{_{a\in A}}$ be as in the assumptions of the Proposition.
Without loss of generality we can assume that $a\neq a^{\prime}$ implies that $\mathcal{A}_{_{\xi_{_a}}}\neq 
\mathcal{A}_{_{\xi_{_{a^{\prime}}}}}$.
Pick $a_{_0}\in A$ such that $\widehat{d}(\xi_{_a},\xi_{_{{a^{\prime}}}})\leq 1$ whenever 
$a\geq a_{_0}$, $a^{\prime}\geq a_{_0}$ and 
$a^{\prime}\ngeq a$.
Given $a_{_n}\in A$ such that $\widehat{d}(\xi_{_a},\xi_{_{{a^{\prime}}}})\leq \displaystyle{1\over {2^n}}$
whenever $a\geq a^{\prime}\geq a_{_n}$, choose 
$a_{_{n+1}}\in A$ such that $a_{_{n+1}}\geq a_{_{n}}$ and
$\widehat{d}(\xi_{_a},\xi_{_{{a^{\prime}}}})\leq \displaystyle{1\over {2^{n+1}}}$
whenever $a\geq a^{\prime}\geq a_{_{n+1}}$. 
Then, the sequence $(\xi_{_{a_{_n}}})_{_{n\in\mathbb{N}}}$
is a right $K$-Cauchy sequence in $(\widehat{X}, \widehat{d})$.
We have two cases to consider; (i) There exists $a^{\ast}\in A$ such that 
$a^{\ast}>a_{_n}$ for each $n\in\mathbb{N}$; (ii) For each $a\in A$ there exists $n\in\mathbb{N}$ and 
$a_{_n}\in\mathbb{N}$ such that
$a>\!\!\!\!\!\!/\ a_{_n}$.  
In case (i), for each $a, a^{\prime}\geq a^{\ast}\geq a_{_n}, n\in\mathbb{N}$ and $a^{\prime}\ngeq a$,
we have $\widehat{d}(\xi_{_a},\xi_{_{a^{\prime}}})\leq \displaystyle{1\over {2^n}}$ for all $n\in \mathbb{N}$. Hence,
$\widehat{d}(\xi_{_a},\xi_{_{a^{\prime}}})=0$. Thus, Proposition \ref{a123} ensures
the existence of a
right $d_{_S}$-Cauchy net $(t_{_\sigma})_{_{\sigma\in \Sigma}}$ in $(X,d)$ such that the nets
$(\xi_{_a})_{_{a\in A}}$ and $(\phi(t_{_\sigma}))_{_{\sigma\in \Sigma}}$
are right $\widehat{d}$-cofinal
nets.
In case (ii), since $(\xi_{_{a_{_n}}})_{_{n\in \mathbb{N}}}$ is a subsequence of $(\xi_{_a})_{_{a\in A}}$
Proposition \ref{a8} implies that
$(\xi_{_{a_{_n}}})_{_{n\in \mathbb{N}}}$ and $(\xi_{_a})_{_{a\in A}}$
are right $\widehat{d}$-cofinal.
On the other hand, Corollary \ref{124} implies that 
there is right $K$-Cauchy sequence $(t_{_\nu})_{_{\nu\in \mathbb{N}}}$ in $(X,d)$ such that 
the sequences
$(\xi_{_{a_{_n}}})_{_{n\in \mathbb{N}}}$ and $(\phi(t_{_\nu}))_{_{\nu\in \mathbb{N}}}$
are right $\widehat{d}$-cofinal
nets. It follows that $(\xi_{_{a}})_{_{a\in A}}$ and
$(\phi(t_{_\nu}))_{_{\nu\in \mathbb{N}}}$ are right $\widehat{d}$-cofinal nets.
\end{proof}

\begin{corollary}\label{pop}{\rm A quasi-metric space $(X, d)$ is $\delta$-complete if and only if
every right $K$-Cauchy sequence converges to a point of $(X,d)$.}
\end{corollary}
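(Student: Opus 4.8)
The strategy is to show that, in a quasi-metric space, the $\delta$-Cauchy nets are precisely the right $d_S$-Cauchy nets that are right $d$-cofinal to right $K$-Cauchy sequences, and then to carry convergence back and forth along such cofinal pairs by Corollary~\ref{a11}.

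For the implication ``$\delta$-complete $\Rightarrow$ every right $K$-Cauchy sequence converges'', take a right $K$-Cauchy sequence $(x_n)_{n\in\mathbb N}$. By Definition~\ref{a1} it is a right $d_S$-Cauchy net, and it is right $d$-cofinal to itself. Run Zorn's lemma on the partially ordered set of pairs $(\mathcal A,\mathcal B)$, where $\mathcal A$ is a set of pairwise right $d$-cofinal right $d_S$-Cauchy nets containing $(x_n)_{n\in\mathbb N}$, $\mathcal B$ is a set of pairwise left $d$-cofinal left $d_S$-Cauchy nets each being a conet of every member of $\mathcal A$, and the order is componentwise inclusion (unions of chains remain in the poset since the constraints are ``pairwise''). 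A maximal pair results; by Proposition~\ref{a9} right $d$-cofinal nets share their conets, so this maximal pair satisfies (i)--(iii) of Definition~\ref{ewq}, i.e.\ it is a $\delta$-cut whose first class contains $(x_n)_{n\in\mathbb N}$. Hence $(x_n)_{n\in\mathbb N}$ is a $\delta$-Cauchy net and converges by $\delta$-completeness.

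For the converse, assume every right $K$-Cauchy sequence converges and let $(x_a)_{a\in A}\in\mathcal A_\xi$ be a $\delta$-Cauchy net; if $A$ has a last element the net is eventually constant and converges, so assume not. By the monotonicity clause of Definition~\ref{a1} pick a strictly increasing sequence $(a_n)_{n\in\mathbb N}$ in $A$ with $a_n$ a $1/n$-witness for the right $d_S$-Cauchy property; then for $n>m$ the admissible pair $\alpha=a_m,\ \beta=a_n$ gives $d(x_{a_n},x_{a_m})<1/m$, so $(x_{a_n})_{n\in\mathbb N}$ is right $K$-Cauchy, and (using that no $a_n$ is maximal, to supply strict upper bounds) $(x_{a_n})_{n\in\mathbb N}$ is right $d$-cofinal to $(x_a)_{a\in A}$. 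When $(a_n)_{n\in\mathbb N}$ can be taken unbounded in $A$ the reverse cofinality holds too, so $(x_{a_n})_{n\in\mathbb N}\in\mathcal A_\xi$; by hypothesis it converges to some $t\in X$, and Corollary~\ref{a11} forces $x_a\to t$. In the remaining degenerate case, where every countable subset of $A$ is bounded and hence $d(x_{a'},x_a)=0$ past some $a^\ast$, one transfers the problem to $\widehat X$: by Proposition~\ref{sos} the net $(\phi(x_a))_{a\in A}$ is right $\widehat d_S$-Cauchy, by Proposition~\ref{szos} it $\tau_{\widehat d}$-converges to $\xi$, Proposition~\ref{125} produces a right $d_S$-Cauchy net $(t_\sigma)_{\sigma\in\Sigma}$ in $X$ that is right $\widehat d$-cofinal (through the isometry $\phi$ of Proposition~\ref{sos}) to $(\phi(x_a))_{a\in A}$, and the limit is pulled back along $\phi$.

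\textbf{Main obstacle.} The delicate step lies entirely in the converse: verifying that the right $K$-Cauchy sequence extracted from a $\delta$-Cauchy net is genuinely right $d$-cofinal to it, since a naive subsequence need not be cofinal when $A$ has uncountable cofinality — this is exactly what forces the detour through $\widehat X$ and Proposition~\ref{125}.
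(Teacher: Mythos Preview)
Your converse strategy --- extract a right $K$-Cauchy subsequence from the $\delta$-Cauchy net and transfer convergence along right $d$-cofinality via Corollary~\ref{a11} --- is exactly the paper's; the paper simply packages the extraction and the case split as a single invocation of Proposition~\ref{125}. There are, however, two genuine gaps in your version.

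\textbf{Forward direction.} The paper's argument is one line: a $\delta$-Cauchy sequence is in particular a $\delta$-Cauchy net, hence converges. (Note that the paper's own proof phrases both directions in terms of $\delta$-Cauchy \emph{sequences}, not arbitrary right $K$-Cauchy sequences.) Your Zorn construction tries to show that every right $K$-Cauchy sequence lies in the first class of some $\delta$-cut, but nothing in it forces the resulting $\mathcal B$ to be nonempty --- and a right $K$-Cauchy sequence need not admit any left $d_S$-Cauchy conet (compare the Sorgenfrey sequence $(-1/n)$ discussed in Section~4). If instead you read Definition~\ref{ewq} as permitting $\mathcal B_\xi=\emptyset$, then $\delta$-completeness collapses to $d_S$-completeness and the corollary becomes exactly the assertion refuted by the Gregori--Ferrer example. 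Either way, the Zorn step does not deliver what you need.

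\textbf{Degenerate case of the converse.} Here you miss the point of the quasi-\emph{metric} hypothesis. Once $d(x_a,x_{a'})=0$ for all $a,a'\ge a^\ast$ with $a'\not\ge a$, the axiom $d(x,y)=0\Rightarrow x=y$ forces $x_a=x_{a^\ast}$ whenever $a\ge a^\ast$ and $a^\ast\not\ge a$, so the net is eventually constant and converges to $x_{a^\ast}$; this is precisely the paper's case~($\mathfrak a$). Your detour through $\widehat X$ does not close the argument: applying Proposition~\ref{125} to $(\phi(x_a))_{a\in A}$ in this situation lands in its own case~(i), which (via Proposition~\ref{a123}) returns a right $d_S$-Cauchy \emph{net} $(t_\sigma)_{\sigma\in\Sigma}$ rather than a sequence, so the sequential hypothesis is inapplicable; and ``pulling the limit back along $\phi$'' is unjustified, since you have only exhibited $\xi\in\widehat X$ as a limit and have no reason to place $\xi$ in $\phi(X)$.
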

\begin{proof} If a quasi-metric space is $\delta$-complete, then each $\delta$-Cauchy net converges in $X$.
Therefore, each $\delta$-Cauchy sequence converges in $X$.

Conversely, suppose that $(X,d)$ is a quasi-metric space in which every $\delta$-Cauchy sequence converges to a point of $X$.
Let $(x_a)_{a\in A}$ be a $\delta$-Cauchy net in $X$. Then, by Proposition \ref{125} 
we have two cases to consider: ($\mathfrak{a}$) $d(x_a,x_{a^{\prime}})=0$, where $a, a^{\prime}\geq a_{_0}$ for some $a_{_0}\in A$ and $a^{\prime}\ngeq a$; ($\mathfrak{b}$) There exists a subsequence $(x_{a_{_n}})_{n\in\mathbb{N}}$ of $(x_a)_{a\in A}$ such that $(x_a)_{a\in A}$ and $(x_{a_{_n}})_{n\in\mathbb{N}}$ are right $d$-cofinal.
In case ($\mathfrak{a}$) we have $x_a=x_{a_{_0}}$ for all $a\in A$ with $a_{_0}\ngeq a$. Therefore, 
$(x_a)_{a\in A}$ converges to $x_{a_{_0}}$. In case ($\mathfrak{b}$), $(x_{a_{_n}})_{n\in\mathbb{N}}$ converges 
to a point 
$l\in X$. By Corollary \ref{a11} we have that $(x_a)_{a\in A}$ converges to $l$ as well.
\end{proof}

By using the dual version of Proposition \ref{a123}, Corollary \ref{124} and Proposition \ref{125}
for left $d_{_S}$-Cauchy nets we have
the following proposition.

\begin{proposition}\label{pan}{\rm Let $(X,d)$ be a quasi-pseudometric space and let $(\eta_{_\beta})_{_{\beta\in B}}$ be a non-constant left $\widehat{d}_{_S}$-Cauchy net in 
$(\widehat{X},\widehat{d})$ without last element. 
Then,
there exists a left $d_{_S}$-Cauchy net $(t_{_\rho})_{_{\rho\in P}}$ in $(X,d)$ such that the nets
$(\eta_{_\beta})_{_{\beta\in B}}$ and $(\phi(t_{_\rho}))_{_{\rho\in P}}$
are left $\widehat{d}$-cofinal
nets.}
\end{proposition}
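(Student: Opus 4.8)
The plan is to obtain this proposition as the exact ``left'' analogue of Proposition~\ref{125}, by passing to the conjugate quasi-pseudometric $d^{-1}$ and interchanging, throughout, the two classes of every cut. Indeed, the whole development from $(X,d)$ to $(\widehat X,\widehat d)$ uses the triangle inequality only symmetrically, so it should go over verbatim with ``right'' and ``left'' swapped; the conjugate $d^{-1}$ is the device that makes ``swap right and left'' precise, because a net is left $d_{_S}$-Cauchy for $d$ exactly when it is right $(d^{-1})_{_S}$-Cauchy for $d^{-1}$, and likewise with $d$-cofinality replaced by $d^{-1}$-cofinality and with the conet relation reversed.

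First I would set up the conjugation dictionary and the corresponding bijection of cut spaces. From $d^{-1}(x,y)=d(y,x)$ together with Definitions~\ref{a0}, \ref{a1}, \ref{321}, one checks directly: (a) left $d_{_S}$-Cauchy $=$ right $(d^{-1})_{_S}$-Cauchy; (b) left $d$-cofinal $=$ right $d^{-1}$-cofinal; (c) $(y_{_\beta})_{_{\beta\in B}}$ is a conet of $(x_{_a})_{_{a\in A}}$ for $d$ iff $(x_{_a})_{_{a\in A}}$ is a conet of $(y_{_\beta})_{_{\beta\in B}}$ for $d^{-1}$. Consequently the map $\xi=(\mathcal A_{_\xi},\mathcal B_{_\xi})\mapsto \xi^{\sharp}:=(\mathcal B_{_\xi},\mathcal A_{_\xi})$ is a bijection from the set $\widehat X$ of $\delta$-cuts of $(X,d)$ (Definition~\ref{ewq}) onto the set of $\delta$-cuts of $(X,d^{-1})$, and it intertwines the two embeddings $\phi$ (Definition~\ref{a6}, Remark~\ref{p12}), the fixed net $(x)$ lying in both classes in either picture and the convergence requirements swapping $\tau_{_{d}}$ with $\tau_{_{d^{-1}}}$.

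Next one has to verify that the numerical construction of Definition~\ref{a21} is compatible with $\sharp$, i.e.\ that the quasi-pseudometric $\widehat{d^{-1}}$ attached to $(X,d^{-1})$ corresponds, under $\sharp$, to the conjugate of $\widehat d$ --- at any rate that they induce the same Cauchy and cofinality relations on cuts. This is where all the real work sits. Clause~(ii) of Definition~\ref{a21} transforms by itself, since the defining inequality for $\widehat{d^{-1}}(\xi^{\sharp},\eta^{\sharp})\le r$ reads $d^{-1}(x_{_a},x_{_\gamma})=d(x_{_\gamma},x_{_a})<r+\varepsilon$ with $(x_{_a})$ in the first class of $\xi^{\sharp}$ and $(x_{_\gamma})$ in a class of $\eta^{\sharp}$, which is the defining inequality for $\widehat d(\eta,\xi)\le r$; here one appeals to Propositions~\ref{daf} and~\ref{a114} (the value is independent of the representatives and of whether the second net is taken from the $\mathcal A$- or the $\mathcal B$-class). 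Clause~(i) concerns only the case of vanishing distance, and since the cofinality and Cauchy relations are phrased with ``$<\varepsilon$ for every $\varepsilon>0$'' and clause~(ii) has just been matched, the precise form of clause~(i) is immaterial for the application. The same symmetric-use-of-the-triangle-inequality observation shows that Proposition~\ref{szos} and the transfinite-induction construction of Proposition~\ref{a123} dualize as well, so that the ``left'' analogues of Proposition~\ref{a123}, Corollary~\ref{124} and Proposition~\ref{125} hold.

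Granting this, the proof is immediate: given a non-constant left $\widehat d_{_S}$-Cauchy net $(\eta_{_\beta})_{_{\beta\in B}}$ in $(\widehat X,\widehat d)$ without last element, $(\eta_{_\beta}^{\sharp})_{_{\beta\in B}}$ is a non-constant right $(\widehat{d^{-1}})_{_S}$-Cauchy net, without last element, in the cut space of $(X,d^{-1})$; Proposition~\ref{125} for $(X,d^{-1})$ yields a right $(d^{-1})_{_S}$-Cauchy net $(t_{_\rho})_{_{\rho\in P}}$ in $X$ with $(\eta_{_\beta}^{\sharp})_{_{\beta\in B}}$ and $(\phi(t_{_\rho}))_{_{\rho\in P}}$ right $\widehat{d^{-1}}$-cofinal; translating back along the dictionary, $(t_{_\rho})_{_{\rho\in P}}$ is left $d_{_S}$-Cauchy in $(X,d)$, $\phi(t_{_\rho})$ becomes $(\phi(t_{_\rho}))^{\sharp}$, and ``right $\widehat{d^{-1}}$-cofinal'' becomes ``left $\widehat d$-cofinal'', which is the claim. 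The main obstacle is the third paragraph --- pinning down that Definition~\ref{a21}, and with it Propositions~\ref{daf}, \ref{a114} and~\ref{szos}, really are symmetric under conjugation plus class-swap; everything else is bookkeeping. If one wishes to sidestep the conjugation formalism, the same conclusion follows by repeating the transfinite-induction construction of Propositions~\ref{a123} and~\ref{125} word for word, with every inequality $d(x,y)<\varepsilon$ replaced by $d(y,x)<\varepsilon$ and the roles of $\mathcal A_{_\xi}$/$\mathcal B_{_\xi}$ and of ``right''/``left'' interchanged throughout; no new idea is needed.
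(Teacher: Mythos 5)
Your proposal is correct and follows essentially the same route as the paper, which proves Proposition~\ref{pan} simply by invoking ``the dual version'' of Proposition~\ref{a123}, Corollary~\ref{124} and Proposition~\ref{125} for left $d_{_S}$-Cauchy nets. Your conjugation dictionary ($d\mapsto d^{-1}$, class swap $\xi\mapsto\xi^{\sharp}$, and the check that Definition~\ref{a21} and Propositions~\ref{daf}, \ref{a114}, \ref{szos} are compatible with it) is just a careful elaboration of the duality the paper takes for granted.
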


\par\noindent
\begin{theorem}\label{a19}{\rm Every quasi-pseudometric space $(X,d)$ has a $\delta$-completion.}
\end{theorem}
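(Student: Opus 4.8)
The plan is to show that the map $\phi : X \to \widehat{X}$ constructed in Definition \ref{a6} is a quasi-uniform (equivalently, quasi-pseudometric) embedding into a space $(\widehat{X},\widehat{d})$ that is $\delta$-complete, and that $\phi(X)$ is suitably dense in $\widehat{X}$; this is exactly what a $\delta$-completion requires. Most of the ingredients are already in place: Proposition \ref{zos} gives that $\widehat{d}$ is a quasi-pseudometric on $\widehat{X}$, Proposition \ref{sos} gives that $\widehat{d}(\phi(x),\phi(y)) = d(x,y)$ so that $\phi$ is an isometry (hence a quasi-uniform embedding onto its image, and injective once we pass to the $T_0$-quotient as in Remark \ref{a600}), and Proposition \ref{szos} gives the density statement, namely that for $\xi \in \widehat{X}$ with $(x_a)_{a\in A}\in\mathcal{A}_\xi$ one has $\widehat{d}(\xi,\phi(x_a))\to 0$, and dually for the second class. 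So the real content to be assembled here is $\delta$-completeness of $(\widehat{X},\widehat{d})$.

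First I would take an arbitrary $\delta$-Cauchy net $(\xi_a)_{a\in A}$ in $(\widehat{X},\widehat{d})$, i.e.\ a right $\widehat{d}_S$-Cauchy net lying in the first class of some $\widehat{d}$-cut of $\widehat{X}$, and I must produce a point $\zeta\in\widehat{X}$ to which it $\tau_{\widehat d}$-converges. If the net is eventually constant (the degenerate case, e.g.\ when $\widehat d(\xi_a,\xi_{a'})=0$ eventually), convergence is immediate; otherwise I may assume it is non-constant and without last element, and I may pass to a subnet using Proposition \ref{a8} (right $d_S$-Cauchy nets are right $d$-cofinal to their subnets) together with Corollary \ref{a11} (right $d$-cofinal nets share their $\tau(d)$-limits), so that it suffices to find a limit of a convenient subnet. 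Now apply Proposition \ref{125}: there is a right $d_S$-Cauchy net $(t_\sigma)_{\sigma\in\Sigma}$ in $(X,d)$ such that $(\xi_a)_{a\in A}$ and $(\phi(t_\sigma))_{\sigma\in\Sigma}$ are right $\widehat d$-cofinal. Dually, using Proposition \ref{pan} applied to an associated conet in $\widehat X$ (whose existence comes from the $\widehat d$-cut containing $(\xi_a)$), I obtain a left $d_S$-Cauchy net $(s_\rho)_{\rho\in P}$ in $X$ that is a conet of $(t_\sigma)$. Then $(t_\sigma)$ is a right $d_S$-Cauchy net admitting the conet $(s_\rho)$, so it lies in the first class $\mathcal{A}_\zeta$ of a $d$-cut $\zeta=(\mathcal{A}_\zeta,\mathcal{B}_\zeta)\in\widehat X$ — here one invokes a Zorn's-lemma maximality argument (as in the construction of $\kappa$-cuts in \cite{and1}) to extend the pair of classes generated by $(t_\sigma)$ and its conets to a maximal one satisfying (i)--(iii) of Definition \ref{ewq}.

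The final step is to show $(\xi_a)_{a\in A}$ converges to this $\zeta$ in $\tau_{\widehat d}$. By Proposition \ref{szos} applied to $\zeta$ with $(t_\sigma)\in\mathcal{A}_\zeta$, we have $\widehat d(\zeta,\phi(t_\sigma))\to 0$; and since $(\phi(t_\sigma))_{\sigma\in\Sigma}$ is right $\widehat d$-cofinal to $(\xi_a)_{a\in A}$, Proposition \ref{a9} (right $d$-cofinal nets have the same conets) and Corollary \ref{a11} transfer this to give $\widehat d(\zeta,\xi_a)\to 0$, i.e.\ $\xi_a\to\zeta$ with respect to $\tau_{\widehat d}$. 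Combined with the embedding and density facts above — and with the observation that the isometry $\phi$ takes $d$-cuts of $X$ to the fixed-net $\widehat d$-cuts $\phi(x)$ of $\widehat X$, so that the completion restricts correctly — this exhibits $(\widehat X,\widehat d)$ (or its $T_0$-quotient $(\widehat X^*,\widehat d^{\,*})$ when one insists on $T_0$) as a $\delta$-completion of $(X,d)$. The main obstacle I anticipate is bookkeeping in the second-class/conet half: one must verify that the left $d_S$-Cauchy net $(s_\rho)$ extracted via Proposition \ref{pan} genuinely serves as a conet of $(t_\sigma)$ in $X$ (not merely that the corresponding nets in $\widehat X$ behave well), and that the maximal cut $\zeta$ one forms is independent — by Corollary \ref{a105}, $\mathcal{A}_\zeta$ is determined once it is nonempty — of the choices made in Propositions \ref{125} and \ref{pan}, so that $\zeta$ is a well-defined limit.
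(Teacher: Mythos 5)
Your plan follows essentially the same route as the paper: reduce the $\delta$-Cauchy net in $(\widehat X,\widehat d)$ to a right $d_{_S}$-Cauchy net in $(X,d)$ via Proposition \ref{125} (with Proposition \ref{pan} supplying the conet side), package that net into a $d$-cut which serves as the limit point, and deduce convergence from Proposition \ref{szos} together with Corollary \ref{a11}. The only real difference is cosmetic: the paper defines the limit cut by pulling the two classes of the cut $\widehat\xi$ of $\widehat X$ containing $(\xi_a)_{a\in A}$ back through $\phi$, whereas you extend the pair generated by $(t_\sigma)_{\sigma\in\Sigma}$ and its conets to a maximal cut by a Zorn-type argument; the conet verification you flag as an obstacle is handled exactly as in the paper, via Propositions \ref{a9}, \ref{a10} and the isometry property of Proposition \ref{sos}.
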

\begin{proof} Let 
$(\xi_{_a})_{_{a\in A}}$ be a $\delta$-Cauchy net in the space 
$(\widehat{X},\widehat{d})$.
Then, by definition \ref{a3},
there exists a $\delta$-cut $\widehat{\xi}\in \widehat{X}$ such that 
$(\xi_{_a})_{_{a\in A}}\in \mathcal{A}_{_{\widehat{\xi}}}$.
Let
\begin{equation}
\widehat{\xi}=(\mathcal{A}_{_{\widehat{\xi}}},\mathcal{B}_{_{\widehat{\xi}}})\  {\rm where}
\mathcal{A}_{_{\widehat{\xi}}}=\{(\xi^{^i}_{_k})_{_{k\in K_{_i}}}\vert i\in I\}\  {\rm and}\ \
{\mathcal{B}}_{_{\widehat{\xi}}}=\{(\eta^{^j}_{_\lambda})_{_{\lambda\in \Lambda_{_j}}}\vert j\in J\}.
\label{eq:1022}
\end{equation}

We prove that
there exists a $\delta$-cut $\xi$ in $(X,d)$ such that 
$(\xi_{_a})_{_{a\in A}}$
converges to $\xi$. 
\par\noindent
We define $\xi=(\mathcal{A}_{_\xi},\mathcal{B}_{_\xi})$, where
\begin{equation}
\mathcal{A}_{_{\xi}}=\{(x_{_\sigma})_{_{\sigma\in \Sigma}}\ \vert \ (x_{_\sigma})_{_{\sigma\in \Sigma}} 
 {\rm is\ a \
right}\ {\rm d_{_S}-Cauchy\ net\ in\ (X,d)\ such \ that \
(\phi(x_{_\sigma})_{_\sigma\in \Sigma}\in \mathcal{A}_{_{\widehat{\xi}}}}\}
\label{eq:1032}
\end{equation}
and
\begin{equation}
\mathcal{B}_{_{\xi}}=\{(y_{_\rho})_{_{\rho\in P}}\ \vert \ (y_{_\rho})_{_{\rho\in P}} 
 {\rm is\ a \
left}\ {\rm d_{_S}-Cauchy\ net\ in\ (X,d)\ such \ that \
(\phi(y_{_\rho})_{_{\rho\in P}}\in \mathcal{B}_{_{\widehat{\xi}}}}\}.
\label{eq:1033}
\end{equation}
By Propositions \ref{125} and \ref{pan} the classes $\mathcal{A}_{_{\xi}}$ and $\mathcal{B}_{_{\xi}}$ are non-void.
We first verify that $\xi=(\mathcal{A}_{_{\xi}},{\mathcal{B}}_{_{\xi}})$ constitute a $\delta$-cut in $(X,d)$. For this we
need to show that the pair $(\mathcal{A}_{_{\xi}},{\mathcal{B}}_{_{\xi}})$ satisfies the conditions of Definition \ref{ewq}.
We first prove 
the validity of Condition (i) of Definition \ref{ewq}.
Let
$(x_{_\sigma})_{_\sigma\in \Sigma}\in \mathcal{A}_{_{\xi}}$ and
$(y_{_\rho})_{_\rho\in P}\in {\mathcal{B}}_{_{\xi}}$.
Then, 
by construction of $\mathcal{A}_{_{\xi}}$ and $\mathcal{B}_{_{\xi}}$, we have $\lim\limits_{\rho,\sigma}
\widehat{d}(\phi(y_{_\rho}),\phi(x_{_{\sigma}}))=0$.
Hence, Proposition \ref{sos} implies that $\lim\limits_{\sigma,\rho}
d(y_{_\sigma},x_{_{\rho}})=0$. 
To prove that $\xi$ satisfies the second condition of Definition \ref{ewq}, let 
$(x_{_\sigma}){_{_{\sigma\in \Sigma}}}$,
$(x_{_{\rho}}){_{_{\rho\in P}}}$ be two right $d_{_S}$-Cauchy nets
of 
$\mathcal{A}_{_{\xi}}$. Since
$(\phi(x_{_\sigma}))_{_{\sigma\in \Sigma}}$, 
$(\phi(x_{_{\rho}}))_{_{{\rho}\in P}}$
belong to
$\mathcal{A}_{_{\widehat{\xi}}}$, it follows by the definition of $\xi$
that they are right 
$\widehat{d}$-cofinal.
Hence, 
Proposition \ref{sos} 
implies that
$(x_{_\sigma})_{_{\sigma\in \Sigma}}$ and
$(x_{_\rho})_{_{\rho\in P}}$
are right $d$-cofinal.
Finally, condition (iii)
of Definition \ref{ewq} is an immediate consequence of 
the maximality of 
$\mathcal{A}_{_{\xi}}$ and ${\mathcal{B}}_{_{\xi}}$, respectively.
\par
Similarly we can prove 
conditions (ii) and (iii) in the case of left $d_{_S}$-Cauchy nets, members of $\mathcal{B}_{_\xi}$.
\par
\smallskip
\par\noindent
We now prove that 
$(\xi_{_a})_{_{a\in  A}}$ converges to $\xi$.
Indeed, according to Proposition \ref{125}
there exists a right $d_{_S}$-Cauchy net 
$(x_{_\sigma})_{_{\sigma\in \Sigma}}$ in $(X,d)$ such that the nets 
$(\xi_{_a})_{_{a\in  A}}$ and $(\phi(x_{_\sigma}))_{_{\sigma\in \Sigma}}$
are right $\widehat{d}$-cofinal. By construction of $\xi$ we have that
$(x_{_\sigma})_{_{\sigma\in \Sigma}}\in \mathcal{A}_{_\xi}$.
By Proposition \ref{szos} we have that
$\phi(x_{_{\sigma}})\longrightarrow \xi$. Since 
$(\phi(x_{_{\sigma}}))_{_{\sigma\in \Sigma}}$ and $(\xi_{_a})_{_{a\in A}}$
are right $\widehat{d}$-cofinl, Proposition \ref{a11} implies that  
$\xi_{_a}\longrightarrow \xi$. 
It follows that $(X,d)$ is $\delta$-complete.
\end{proof}

\begin{proposition}\ {\rm A quasi-pseudometric space $(X,d)$ is $\delta$-complete if and only if
$(X,\mathcal{U}_{_d})$ is $\mathcal{U}_{_d}$-complete.}
\end{proposition}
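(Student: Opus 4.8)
The plan is to observe that for the quasi-uniformity $\mathcal{U}_{_d}$, which by construction has the base $\{U_{_{d,\epsilon}}\mid \epsilon>0\}$ and satisfies $\tau_{_d}=\tau(\mathcal{U}_{_d})$, every net-theoretic notion entering the definition of a $\delta$-cut translates verbatim into the corresponding notion for $\mathcal{U}_{_d}$, so that $\delta$-cuts in $(X,d)$ are literally the same objects as $\mathcal{U}_{_d}$-cuts in $(X,\mathcal{U}_{_d})$, and hence $\delta$-Cauchy nets coincide with $\mathcal{U}_{_d}$-Cauchy nets.

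First I would assemble the dictionary. By Proposition \ref{a2} a net is right (resp. left) $d_{_S}$-Cauchy if and only if it is right (resp. left) $(\mathcal{U}_{_d})_{_S}$-Cauchy. The identical base argument applied to Definition \ref{321} shows that $(y_{_\beta})_{_{\beta\in B}}$ is a conet of $(x_{_a})_{_{a\in A}}$ with respect to $d$ if and only if it is a conet with respect to $\mathcal{U}_{_d}$, and applied to the definitions of cofinality it shows that two nets are right (resp. left) $d$-cofinal if and only if they are right (resp. left) $\mathcal{U}_{_d}$-cofinal. In each case the nontrivial direction uses only that every $U\in\mathcal{U}_{_d}$ contains some $U_{_{d,\epsilon}}$, while the converse is immediate because each $U_{_{d,\epsilon}}$ is itself an entourage.

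With this dictionary the three defining conditions of a $\delta$-cut $\xi=(\mathcal{A}_{_\xi},\mathcal{B}_{_\xi})$ in $(X,d)$ (Definition \ref{ewq}) coincide condition by condition with those of a $\mathcal{U}_{_d}$-cut in $(X,\mathcal{U}_{_d})$ (Definition \ref{ewq1}); since the families in play are the same families of nets, the maximality clause (iii) transfers as well. Hence $\xi$ is a $\delta$-cut in $(X,d)$ if and only if it is a $\mathcal{U}_{_d}$-cut in $(X,\mathcal{U}_{_d})$, so that the $\delta$-Cauchy nets of $(X,d)$ (Definition \ref{a3}) are exactly the $\mathcal{U}_{_d}$-Cauchy nets of $(X,\mathcal{U}_{_d})$ (Definition \ref{a13}). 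Finally, since $\tau_{_d}=\tau(\mathcal{U}_{_d})$, a net converges in $(X,d)$ if and only if it converges in $(X,\mathcal{U}_{_d})$; combining the last two equivalences yields that every $\delta$-Cauchy net converges in $(X,d)$ if and only if every $\mathcal{U}_{_d}$-Cauchy net converges in $(X,\mathcal{U}_{_d})$, which is the claim. I do not anticipate a genuine obstacle here; the one point needing a line of care is that passing between the quasi-metric and quasi-uniform formulations preserves the maximality of $\mathcal{A}_{_\xi}$ and $\mathcal{B}_{_\xi}$, which holds because the correspondence is a bijection between the relevant classes of right (resp. left) Cauchy nets.
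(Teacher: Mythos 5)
Your proof is correct and follows essentially the same route as the paper, which simply cites Proposition \ref{a2}: the whole content is the base-translation argument (every $U\in\mathcal{U}_{_d}$ contains some $U_{_{d,\epsilon}}$ and each $U_{_{d,\epsilon}}$ is an entourage), under which Cauchy nets, conets, cofinality, and hence cuts and convergence transfer verbatim between $(X,d)$ and $(X,\mathcal{U}_{_d})$. You merely spell out the details (conets, cofinality, maximality of the classes) that the paper treats as immediate.
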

\begin{proof} It is an immediate consequence of Proposition \ref{a2}.
\end{proof}

\begin{proposition}\label{212}{\rm Let $(X,d)$ be a quasi-pseudometric space and let $\mathcal{U}_{_d}$
be the quasi-uniformity induced on $X$ by $d$. Then, the space
$(X,\mathcal{U}_{_d})$ is $\mathcal{U}_{_d}$-complete if and only if the space
$(X,d)$ is $\delta$-complete.
}
\end{proposition}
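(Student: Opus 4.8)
The plan is to show that, when $\mathcal{U}=\mathcal{U}_{_d}$, the notion of a $\mathcal{U}$-cut (Definition~\ref{ewq1}) coincides verbatim with the notion of a $\delta$-cut (Definition~\ref{ewq}), so that the class of $\mathcal{U}_{_d}$-Cauchy nets and the class of $\delta$-Cauchy nets are literally the same and the two completeness conditions assert the same thing. The engine behind every step is that $\{U_{_{d,\epsilon}}\mid\epsilon>0\}$ is a base for $\mathcal{U}_{_d}$: for each $U\in\mathcal{U}_{_d}$ there is $\epsilon>0$ with $U_{_{d,\epsilon}}\subseteq U$, and each $U_{_{d,\epsilon}}$ lies in $\mathcal{U}_{_d}$. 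Hence every condition of the shape ``for each $U\in\mathcal{U}_{_d}$ there exist indices with $(\cdot,\cdot)\in U$'' is equivalent to ``for each $\epsilon>0$ there exist indices with $d(\cdot,\cdot)<\epsilon$'', and conversely.

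First I would record the resulting dictionary. Proposition~\ref{a2} already supplies that a net is right (resp.\ left) $(\mathcal{U}_{_d})_{_S}$-Cauchy if and only if it is right (resp.\ left) $d_{_S}$-Cauchy. The identical base argument yields: (a) $(y_{_\beta})_{_{\beta\in B}}$ is a conet of $(x_{_a})_{_{a\in A}}$ in $(X,\mathcal{U}_{_d})$ iff it is a conet in $(X,d)$; (b) $(x_{_a})_{_{a\in A}}$ is right (resp.\ left) $\mathcal{U}_{_d}$-cofinal to $(x_{_\beta})_{_{\beta\in B}}$ iff it is right (resp.\ left) $d$-cofinal to it; and (c) since $\tau(\mathcal{U}_{_d})=\tau_{_d}$ and $\mathcal{U}_{_d}^{-1}=\mathcal{U}_{_{d^{-1}}}$ (because $U_{_{d,\epsilon}}^{-1}=U_{_{d^{-1},\epsilon}}$, so the bases correspond), a net converges to a point with respect to $\tau(\mathcal{U}_{_d})$ (resp.\ $\tau(\mathcal{U}_{_d}^{-1})$) iff it converges to that point with respect to $\tau_{_d}$ (resp.\ $\tau_{_{d^{-1}}}$).

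With this dictionary in hand, conditions (i)--(iii) of Definition~\ref{ewq1} become, term by term, conditions (i)--(iii) of Definition~\ref{ewq}, and similarly the supplementary clause (iv) attaching $\phi(x)$ to each $x\in X$ reads identically in Definition~\ref{da6} and in Definition~\ref{a6} together with Remark~\ref{p12}. Therefore the family of $\mathcal{U}_{_d}$-cuts in $X$ equals the family of $\delta$-cuts in $X$; in particular, by Definitions~\ref{a13} and \ref{a3}, a net in $X$ is a $\mathcal{U}_{_d}$-Cauchy net iff it is a $\delta$-Cauchy net. Consequently the assertion ``every $\mathcal{U}_{_d}$-Cauchy net converges in $X$ with respect to $\tau(\mathcal{U}_{_d})$'' is exactly the assertion ``every $\delta$-Cauchy net converges in $X$ with respect to $\tau_{_d}$'', i.e.\ $(X,\mathcal{U}_{_d})$ is $\mathcal{U}_{_d}$-complete iff $(X,d)$ is $\delta$-complete.

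The only part demanding any attention is the verification of dictionary items (a) and (b); both are immediate once one observes that quantifying over $U\in\mathcal{U}_{_d}$ and quantifying over $\epsilon>0$ (through the entourages $U_{_{d,\epsilon}}$) are interchangeable, and the rest of the argument is purely formal bookkeeping. In other words, there is no real obstacle here: the proposition is a translation statement, and once Proposition~\ref{a2} and the equality $\tau(\mathcal{U}_{_d})=\tau_{_d}$ are extended to cofinality and conets, the equivalence of the two completeness notions is automatic.
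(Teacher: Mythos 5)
Your proposal is correct and follows essentially the same route as the paper: the paper's own proof simply invokes Proposition~\ref{a2} together with the fact that the entourages $U_{_{d,\epsilon}}$ form a base for $\mathcal{U}_{_d}$, which is exactly the base-translation argument you develop. You merely make explicit the dictionary (conets, cofinality, cuts, and $\tau(\mathcal{U}_{_d})=\tau_{_d}$) that the paper treats as immediate.
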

\begin{proof} The result is an immediate consequence 
of Proposition \ref{a2} and the
fact that
the familly
$U_{\epsilon}=\{(x,y)\in X\times X\vert d(x,y)<\epsilon\ \vert,\ \epsilon>0\}$
is a base for $\mathcal{U}_{_d}$.
\end{proof}

\begin{lemma}\label{512}{\rm Let $(X,\mathcal{U})$ and $(Y,\mathcal{V})$ be arbitrary quasi-uniform spaces
and let $f: (X,\mathcal{U})\longrightarrow (X,\mathcal{V})$ be a quasi-uniformly continuous mapping.
If $\xi=(\mathcal{A}_{_\xi},\mathcal{B}_{_\xi})$ 
where $\mathcal{A}_{_\xi}=\{ (x^i_{_a})_{_{a\in A}} \vert i\in I\}$ and
$\mathcal{B}_{_\xi}=\{ (y^j_{_\beta})_{_{\beta\in B}} \vert j\in J\}$
is a $\mathcal{U}$-cut in $(X,\mathcal{U})$, then
$f(\xi)=(f(\mathcal{A}_{_\xi}),f(\mathcal{B}_{_\xi}))$ 
where $f(\mathcal{A}_{_\xi})=\{ (f(x^i_{_a}))_{_{a\in A}} \vert i\in I\}$ and
$f(\mathcal{B}_{_\xi})=\{ (f(y^j_{_\beta}))_{_{\beta\in B}} \vert j\in J\}$ is a 
a $\mathcal{V}$-cut in $(X,\mathcal{V})$.
}
\end{lemma}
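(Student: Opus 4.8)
The strategy is to verify directly that the pair $(f(\mathcal{A}_{_\xi}),f(\mathcal{B}_{_\xi}))$ satisfies the three defining conditions of a $\mathcal{V}$-cut (Definition \ref{ewq1}), plus the preliminary fact that each of its members is a right (resp. left) $\mathcal{V}_{_S}$-Cauchy net. Throughout, the only tool needed is quasi-uniform continuity of $f$: given $V\in\mathcal{V}$, there is $U\in\mathcal{U}$ with $(x,y)\in U\Rightarrow (f(x),f(y))\in V$. Since all four conditions (Cauchyness, being a conet, being $\mathcal{U}$-cofinal, and — for Cauchyness — the index-ordering condition of Stoltenberg) are phrased purely in terms of membership of pairs of images in an entourage, each condition transfers along $f$ by substituting this $U$ for the corresponding $V$.

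\textbf{Steps in order.} First I would show that if $(x^i_{_a})_{_{a\in A}}$ is right $\mathcal{U}_{_S}$-Cauchy then $(f(x^i_{_a}))_{_{a\in A}}$ is right $\mathcal{V}_{_S}$-Cauchy: fix $V\in\mathcal{V}$, pick $U\in\mathcal{U}$ witnessing continuity, take $a_{_U}\in A$ from the Cauchy condition for $U$; then for $\alpha\geq a_{_U}$, $\beta\geq a_{_U}$, $a\ngeq\beta$ we have $(x^i_{_\beta},x^i_{_\alpha})\in U$, hence $(f(x^i_{_\beta}),f(x^i_{_\alpha}))\in V$, so the same index $a_{_U}$ works. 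The left case is symmetric. Second, condition (i): if $(x^i_{_a})_{_{a\in A}}$ has conet $(y^j_{_\beta})_{_{\beta\in B}}$, then for $V\in\mathcal{V}$ choose $U$ as above and $a_{_U}\in A$, $\beta_{_U}\in B$ so that $(y^j_{_\beta},x^i_{_a})\in U$ for $a\geq a_{_U}$, $\beta\geq\beta_{_U}$; applying $f$ gives $(f(y^j_{_\beta}),f(x^i_{_a}))\in V$, so $(f(y^j_{_\beta}))$ is a conet of $(f(x^i_{_a}))$. Third, condition (ii): if $(x^i_{_a})$ and $(x^{i'}_{_a})$ are right $\mathcal{U}$-cofinal, the same entourage-substitution argument shows $(f(x^i_{_a}))$ and $(f(x^{i'}_{_a}))$ are right $\mathcal{V}$-cofinal; likewise on the left. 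Finally, condition (iii), maximality: here one does not get it for free, since the image families need not be maximal in $(Y,\mathcal{V})$. The clean fix is to define $f(\xi)$ as \emph{the} $\mathcal{V}$-cut whose first (resp.\ second) class is the maximal extension of $f(\mathcal{A}_{_\xi})$ (resp.\ $f(\mathcal{B}_{_\xi}))$ — such a maximal extension exists by a Zorn's lemma argument (or is forced, as in Corollary \ref{a105}), and the compatibility conditions (i),(ii) are preserved under the extension because they are "local" pairwise conditions, so no member added to the first class can destroy the conet relation with $\mathcal{B}$-members already shown to hold.

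\textbf{Main obstacle.} The routine part is steps one through three — pure entourage-chasing through $f$. The genuine point to be careful about is (iii): one must check that enlarging $f(\mathcal{A}_{_\xi})$ to a maximal compatible family (and similarly $f(\mathcal{B}_{_\xi})$) can be done \emph{simultaneously} so that properties (i) and (ii) still hold of the enlarged pair, i.e.\ that the poset of "compatible pairs extending $(f(\mathcal{A}_{_\xi}),f(\mathcal{B}_{_\xi}))$" is nonempty, closed under unions of chains, and therefore has a maximal element by Zorn's lemma. I would also remark that whether or not this maximal extension is \emph{unique} is exactly what makes $f(\xi)$ well defined as a function of $\xi$; if uniqueness fails one should instead phrase the lemma as "$f$ carries each $\mathcal{U}$-cut to \emph{some} $\mathcal{V}$-cut," which is all the downstream arguments need. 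A small technical wrinkle: one should note explicitly that $f$ need not be injective, so the directed index sets $A$ and $B$ of the original nets are reused unchanged for the image nets, which is legitimate since a net is a function out of a directed set and post-composition with $f$ leaves the domain intact.
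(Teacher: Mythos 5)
Your entourage-chasing in steps one through three is exactly the paper's argument: the paper's proof consists of the single observation that quasi-uniform continuity sends right $\mathcal{U}_{_S}$-Cauchy nets to right $\mathcal{V}_{_S}$-Cauchy nets and conets to conets, followed by ``the rest is obvious.'' Where you differ is on condition (iii), and there your caution is warranted rather than redundant: the image families $f(\mathcal{A}_{_\xi})$ and $f(\mathcal{B}_{_\xi})$ indeed need not be maximal in $(Y,\mathcal{V})$ (e.g.\ $Y$ may contain right $\mathcal{V}$-cofinal Cauchy nets that are not images under $f$), so the pair $(f(\mathcal{A}_{_\xi}),f(\mathcal{B}_{_\xi}))$ as literally written is only a compatible pair generating a $\mathcal{V}$-cut, not a $\mathcal{V}$-cut itself. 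The paper never addresses this; your fix — extend to a maximal compatible pair by Zorn's lemma, the pairwise conditions (i)--(ii) being preserved under unions of chains, or simply restate the lemma as ``$f$ carries each $\mathcal{U}$-cut into (the classes of) some $\mathcal{V}$-cut'' — is the correct repair, and it is all that the downstream use of the lemma requires. Your remark on uniqueness is also apt: well-definedness of $\xi\mapsto f(\xi)$ as a map on cuts would need an argument along the lines of Corollary \ref{a105} transplanted to $(Y,\mathcal{V})$, which the paper does not supply. In short, your proof matches the paper's on the routine part and is strictly more careful on the one point the paper waves away.
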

\begin{proof} Since $f$ is quasi-uniform continuous, if $(x_{_a})_{_{a\in A}}$ is a right $\mathcal{U}_{_S}$-Cauchy
net in $(X,\mathcal{U})$ with a left $\mathcal{U}_{_S}$-Cauchy net $(y_{_\beta})_{_{\beta\in B}}$
as conet, then
 $(f(x_{_a}))_{_{a\in A}}$ is a right $\mathcal{V}_{_S}$-Cauchy
net in $(Y,\mathcal{V})$ with $(f(y_{_\beta}))_{_{\beta\in B}}$
a left $\mathcal{V}_{_S}$-Cauchy conet.
The rest is obvious.
\end{proof}

The following proposition is evident.
\begin{proposition}\label{412}{\rm A closed subspace of a $\mathcal{U}$-complete quasi-uniform space $(X,\mathcal{U})$ is $\mathcal{U}$-complete.}
\end{proposition}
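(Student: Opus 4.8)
The plan is to establish the following: if $(X,\mathcal{U})$ is $\mathcal{U}$-complete and $F\subseteq X$ is closed with respect to $\tau_{_{\mathcal{U}}}$, then $(F,\mathcal{U}|_F)$ is $\mathcal{U}$-complete, where $\mathcal{U}|_F$ denotes the relative quasi-uniformity $\{U\cap(F\times F)\mid U\in\mathcal{U}\}$. Let $(x_{_a})_{_{a\in A}}$ be a $\mathcal{U}|_F$-Cauchy net in $F$, that is, a right $(\mathcal{U}|_F)_{_S}$-Cauchy net that is a member of the first class $\mathcal{A}_{_\xi}$ of some $\mathcal{U}|_F$-cut $\xi=(\mathcal{A}_{_\xi},\mathcal{B}_{_\xi})$ in $F$. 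The first step is to observe that $(x_{_a})_{_{a\in A}}$, viewed as a net in $X$, is a right $\mathcal{U}_{_S}$-Cauchy net in $(X,\mathcal{U})$: this is immediate since every $U\in\mathcal{U}$ restricts to $U\cap(F\times F)\in\mathcal{U}|_F$ and the defining inequality $(x_{_\beta},x_{_\alpha})\in U\cap(F\times F)$ in particular gives $(x_{_\beta},x_{_\alpha})\in U$.

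Next I would show that $(x_{_a})_{_{a\in A}}$ is itself a $\mathcal{U}$-Cauchy net in $(X,\mathcal{U})$, i.e.\ lies in the first class of a $\mathcal{U}$-cut of $(X,\mathcal{U})$. The natural candidate is to take $\mathcal{A}$ to be the maximal family of right $\mathcal{U}_{_S}$-Cauchy nets in $X$ that are pairwise right $\mathcal{U}$-cofinal and contain $(x_{_a})_{_{a\in A}}$, together with $\mathcal{B}$ the maximal compatible family of conets. Here one uses that each member of $\mathcal{B}_{_\xi}$ (a conet in $F$) is a fortiori a conet in $X$, so the pair extends to a genuine $\mathcal{U}$-cut of $(X,\mathcal{U})$; a Zorn's-lemma argument (as is implicit in the maximality clause (iii) of Definition \ref{ewq1}) produces the required maximal classes. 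Since $(X,\mathcal{U})$ is $\mathcal{U}$-complete, $(x_{_a})_{_{a\in A}}$ then converges with respect to $\tau_{_{\mathcal{U}}}$ to some point $x\in X$.

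It remains to place the limit inside $F$ and to identify it as a limit within the subspace. Because $F$ is $\tau_{_{\mathcal{U}}}$-closed and $(x_{_a})_{_{a\in A}}$ is eventually (indeed always) in $F$ while converging to $x$ in $\tau_{_{\mathcal{U}}}$, we get $x\in F$. Finally, convergence $x_{_a}\to x$ in $\tau_{_{\mathcal{U}}}$ restricts to convergence in $\tau_{_{\mathcal{U}|_F}}=\tau_{_{\mathcal{U}}}|_F$, since the basic neighbourhoods $U(x)\cap F = (U\cap(F\times F))(x)$ of $x$ in $F$ are exactly the traces on $F$ of the basic $\tau_{_{\mathcal{U}}}$-neighbourhoods of $x$. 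Hence the arbitrary $\mathcal{U}|_F$-Cauchy net in $F$ converges in $F$, so $(F,\mathcal{U}|_F)$ is $\mathcal{U}$-complete.

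The step I expect to be the main obstacle is the middle one: verifying carefully that a $\mathcal{U}|_F$-cut $\xi$ in the subspace $F$ genuinely extends to a $\mathcal{U}$-cut of the ambient space, so that a $\mathcal{U}|_F$-Cauchy net really is a $\mathcal{U}$-Cauchy net. One must check that the cofinality and conet conditions (i)--(ii) of Definition \ref{ewq1}, which hold for $\xi$ relative to $\mathcal{U}|_F$, still hold relative to $\mathcal{U}$ — this is straightforward since the relevant inequalities only get weakened on passing from $U\cap(F\times F)$ to $U$ — and then invoke maximality to enlarge $(\mathcal{A}_{_\xi},\mathcal{B}_{_\xi})$ to a full $\mathcal{U}$-cut without destroying membership of $(x_{_a})_{_{a\in A}}$ in the first class. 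Everything else (the Cauchy transfer and the closedness argument) is routine.
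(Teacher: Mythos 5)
Your proof is correct: the paper itself gives no argument for this proposition (it is simply declared ``evident''), and your three steps --- transferring right $(\mathcal{U}|_F)_S$-Cauchyness and the cut data from $F$ to $X$, enlarging to a maximal $\mathcal{U}$-cut so that the net is genuinely $\mathcal{U}$-Cauchy in $(X,\mathcal{U})$, and then combining completeness with closedness of $F$ and the identity $(U\cap(F\times F))(x)=U(x)\cap F$ --- are exactly the routine details the authors leave implicit. The only point that deserves the care you give it is the extension of the subspace cut to an ambient cut: it works because conditions (i)--(ii) of Definition \ref{ewq1} only become weaker when the entourages $U\cap(F\times F)$ are replaced by $U$, and the maximality clause (iii) is then met by the same Zorn-type enlargement of the pair of classes that the paper tacitly assumes whenever it invokes maximal classes, so your argument fills the gap correctly.
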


\begin{proposition}\label{213}{\rm If $\{(X_{_i},\mathcal{U}_{_i})\ \vert \ i\in I\}$ is a family of 
${\mathcal{U}}_{_i}$-complete quasi-uniform spaces, then the product space 
$(\widetilde{X},\widetilde{\mathcal{U}})=(\displaystyle\prod_{_i\in I}X_{_i},\displaystyle\prod_{_i\in I}\mathcal{U}_{_i})$ is
$\widetilde{\mathcal{U}}$-complete.}
\end{proposition}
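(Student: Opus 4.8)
The plan is to reduce convergence in the product to coordinatewise convergence, using the quasi-uniform continuity of the projection maps together with Lemma \ref{512}. First I would take an arbitrary $\widetilde{\mathcal{U}}$-Cauchy net $(x_{_a})_{_{a\in A}}$ in $(\widetilde{X},\widetilde{\mathcal{U}})$; by Definition \ref{a13} there is a $\widetilde{\mathcal{U}}$-cut $\xi=(\mathcal{A}_{_\xi},\mathcal{B}_{_\xi})$ with $(x_{_a})_{_{a\in A}}\in\mathcal{A}_{_\xi}$. For each $j\in I$ the projection $\pi_{_j}\colon(\widetilde{X},\widetilde{\mathcal{U}})\to(X_{_j},\mathcal{U}_{_j})$ is quasi-uniformly continuous, so by Lemma \ref{512} the pair $\pi_{_j}(\xi)=(\pi_{_j}(\mathcal{A}_{_\xi}),\pi_{_j}(\mathcal{B}_{_\xi}))$ is a $\mathcal{U}_{_j}$-cut in $X_{_j}$ and the net $(\pi_{_j}(x_{_a}))_{_{a\in A}}$ lies in its first class; hence $(\pi_{_j}(x_{_a}))_{_{a\in A}}$ is a $\mathcal{U}_{_j}$-Cauchy net in $(X_{_j},\mathcal{U}_{_j})$.

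Next, since each $(X_{_j},\mathcal{U}_{_j})$ is $\mathcal{U}_{_j}$-complete, the net $(\pi_{_j}(x_{_a}))_{_{a\in A}}$ converges with respect to $\tau(\mathcal{U}_{_j})$ to some point $z_{_j}\in X_{_j}$; put $z=(z_{_j})_{_{j\in I}}\in\widetilde{X}$. It remains to check that $(x_{_a})_{_{a\in A}}$ converges to $z$ in $\tau(\widetilde{\mathcal{U}})$. Because $\widetilde{\mathcal{U}}$ is the coarsest quasi-uniformity making all projections uniformly continuous, a neighbourhood base at $z$ consists of the sets $W(z)$, where $W$ ranges over the finite intersections $\bigcap_{k=1}^{n}\pi_{_{j_k}}^{-1}(U_{j_k})$ with $j_1,\ldots,j_n\in I$ and $U_{j_k}\in\mathcal{U}_{_{j_k}}$. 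Given such a $W$, for each $k$ choose $a_{_k}\in A$ with $(z_{_{j_k}},\pi_{_{j_k}}(x_{_a}))\in U_{j_k}$ whenever $a\geq a_{_k}$; since $A$ is directed, pick $a_{_W}\in A$ with $a_{_W}\geq a_{_k}$ for all $k\leq n$. Then $(z,x_{_a})\in W$ for every $a\geq a_{_W}$, so $(x_{_a})_{_{a\in A}}$ converges to $z$.

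As the $\widetilde{\mathcal{U}}$-Cauchy net was arbitrary, this establishes that $(\widetilde{X},\widetilde{\mathcal{U}})$ is $\widetilde{\mathcal{U}}$-complete. The one point that needs care is the invocation of Lemma \ref{512}: one must make sure the projected first class $\pi_{_j}(\mathcal{A}_{_\xi})$ is maximal --- or, failing literal maximality, that it can be enlarged to a $\mathcal{U}_{_j}$-cut still containing $(\pi_{_j}(x_{_a}))_{_{a\in A}}$ in its first class --- so that $(\pi_{_j}(x_{_a}))_{_{a\in A}}$ genuinely qualifies as a $\mathcal{U}_{_j}$-Cauchy net in the sense of Definition \ref{a13}. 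Everything else is the routine ``coordinatewise convergence implies convergence in the product'' argument, which applies here precisely because all the coordinate nets share the single directed index set $A$.
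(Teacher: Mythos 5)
Your proof is correct and follows essentially the same route as the paper's: project the Cauchy net and its associated cut to each factor (the paper checks the Cauchy/conet conditions directly with entourages of the form $U_i\times\prod_{j\neq i}(X_j\times X_j)$, while you invoke Lemma~\ref{512} via the quasi-uniformly continuous projections), use $\mathcal{U}_i$-completeness coordinatewise, and finish with the standard product-convergence argument, which the paper leaves as ``easy to verify.'' Your caveat about the maximality of $\pi_{_j}(\mathcal{A}_{_\xi})$ is well placed: the paper (and the ``the rest is obvious'' step in Lemma~\ref{512}) glosses over precisely this point, and it is repaired exactly as you indicate, by enlarging the projected pair to a maximal pair still containing $(\pi_{_j}(x_{_a}))_{_{a\in A}}$ in its first class.
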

\begin{proof} Let $(\Xi_{_a})_{_{a\in A}}$ be a 
$\widetilde{\mathcal{U}}$-Cauchy net in $(\widetilde{X},\widetilde{\mathcal{U}})$.
Then, there exists a a $\widetilde{\mathcal{U}}$-cut $\Xi=(\mathcal{A}_{_{\Xi}},\mathcal{B}_{_{\Xi}})$
in $(\widetilde{X},\widetilde{\mathcal{U}})$ such that $(\Xi_{_a})_{_{a\in A}}\in \mathcal{A}_{_{\Xi}}$.
Let $(\Xi_{_a})_{_{a\in A}}\in \mathcal{A}_{_{\Xi}}$ 
and $(H_{_\beta})_{_{\beta\in B}}\in \mathcal{B}_{_{\Xi}}$.
Let $\Xi^a=\{\Xi^i_{_a}\vert i\in I\}$ for each $a\in A$ and 
$H^\beta=\{H^j_{_\beta}\vert j\in J\}$ for each $\beta\in B$.
Fix an $i\in I$ and choose a $U_i\in \mathcal{U}_i$. Given $U_i\times \displaystyle\prod_{j\neq i}(X_j\times X_j)$ there are $a_{_0}\in A$ and $\beta_{_0}\in B$ such that: (1) $(x^a_{_i},x^{a^{\prime}}_{_i})\in U$
whenever $a\geq a_{_0}$, $a^{\prime}\geq a_{_0}$ and $a^{\prime}\ngeq a$; 
(2) $(y^{\beta^{\prime}}_{_j},y^\beta_{_j})\in U$
whenever $\beta\geq \beta_{_0}$, $\beta^{\prime}\geq \beta_{_0}$ and $\beta^{\prime}\ngeq \beta$
and (3) $(y^\beta_{_j},x^a_{_i})\in U$ whenever $a\geq a_{_0}$ and $\beta\geq \beta_{_0}$.
Thus, $\Xi_i=(\mathcal{A}_{_{{\Xi}_i}},\mathcal{B}_{_{{\Xi}_i}})$ is a $\mathcal{U}_i$-cut
in $(X_{_i},\mathcal{U}_{_i})$ and $(x^a_i)_{_{a\in A}}$ is a $\mathcal{U}_i$-Cauchy net in
$(X_{_i},\mathcal{U}_{_i})$.
Thus, $(x^a_i)_{_{a\in A}}$ converges to a point $x_i\in X_i$. Let $\widetilde{x}=\{x_i\vert i\in I\}$. Then, it is easy to verify that $x^a\longrightarrow \widetilde{x}.$
\end{proof}

The following proposition is evident.
\begin{proposition}\label{612}{\rm A closed subspace of a $\mathcal{U}$-complete quasi-uniform space $(X,\mathcal{U})$ is $\mathcal{U}$-complete.}
\end{proposition}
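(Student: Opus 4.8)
The plan is to derive $\mathcal{U}_{_Y}$-completeness of a closed subspace $(Y,\mathcal{U}_{_Y})$ directly from the assumed $\mathcal{U}$-completeness of $(X,\mathcal{U})$, where $Y$ is closed in $\tau_{_{\mathcal{U}}}$ and $\mathcal{U}_{_Y}=\{U\cap(Y\times Y)\mid U\in\mathcal{U}\}$ is the subspace quasi-uniformity, so that $\tau_{_{\mathcal{U}_{_Y}}}$ is exactly the subspace topology induced by $\tau_{_{\mathcal{U}}}$ on $Y$. The first fact to record is that, for a net all of whose values lie in $Y$, each of the conditions defining ``right (resp.\ left) $\mathcal{U}_{_S}$-Cauchy'' (Definition \ref{a0}), ``conet'' (Definition \ref{321}), and ``$\mathcal{U}$-cofinal'' is unaffected by whether it is tested against an entourage $U\in\mathcal{U}$ or against its trace $U\cap(Y\times Y)\in\mathcal{U}_{_Y}$. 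Hence a $(\mathcal{U}_{_Y})_{_S}$-Cauchy net in $Y$ is a $\mathcal{U}_{_S}$-Cauchy net in $X$, and likewise for left Cauchyness, for conets, and for cofinality, in both directions.

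Now take a $\mathcal{U}_{_Y}$-Cauchy net $(y_{_a})_{_{a\in A}}$ in $Y$. By Definition \ref{a13} it belongs to the first class $\mathcal{A}_{_\xi}$ of some $\mathcal{U}_{_Y}$-cut $\xi=(\mathcal{A}_{_\xi},\mathcal{B}_{_\xi})$ in $Y$. Regarding the members of $\mathcal{A}_{_\xi}$ and $\mathcal{B}_{_\xi}$ as nets in $X$, the previous paragraph shows that $(\mathcal{A}_{_\xi},\mathcal{B}_{_\xi})$ satisfies conditions (i) and (ii) of Definition \ref{ewq1} relative to $\mathcal{U}$. I would then invoke Zorn's lemma (a union of a chain of pairs satisfying (i) and (ii), ordered by componentwise inclusion, again satisfies (i) and (ii)) to obtain a pair $\xi'=(\mathcal{A}_{_{\xi'}},\mathcal{B}_{_{\xi'}})$ with $\mathcal{A}_{_\xi}\subseteq\mathcal{A}_{_{\xi'}}$ and $\mathcal{B}_{_\xi}\subseteq\mathcal{B}_{_{\xi'}}$ that is maximal, i.e.\ a $\mathcal{U}$-cut in $(X,\mathcal{U})$. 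Since $(y_{_a})_{_{a\in A}}\in\mathcal{A}_{_\xi}\subseteq\mathcal{A}_{_{\xi'}}$, it is a $\mathcal{U}$-Cauchy net in $(X,\mathcal{U})$.

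By $\mathcal{U}$-completeness of $(X,\mathcal{U})$ there is a point $x\in X$ with $y_{_a}\to x$ with respect to $\tau_{_{\mathcal{U}}}$. Because $(y_{_a})_{_{a\in A}}$ lies in the $\tau_{_{\mathcal{U}}}$-closed set $Y$, its limit $x$ must belong to $Y$: otherwise $X\setminus Y$ would be a $\tau_{_{\mathcal{U}}}$-neighbourhood of $x$ containing none of the $y_{_a}$, contradicting convergence. As $x\in Y$ and $\tau_{_{\mathcal{U}_{_Y}}}$ is the subspace topology, $(y_{_a})_{_{a\in A}}$ converges to $x$ in $(Y,\mathcal{U}_{_Y})$. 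Thus every $\mathcal{U}_{_Y}$-Cauchy net in $Y$ converges in $Y$, which is precisely $\mathcal{U}$-completeness of $(Y,\mathcal{U}_{_Y})$.

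The only step that is not purely routine is the passage from the $\mathcal{U}_{_Y}$-cut $\xi$ in $Y$ to a $\mathcal{U}$-cut $\xi'$ in $X$: one must check that reinterpreting the nets of $\mathcal{A}_{_\xi},\mathcal{B}_{_\xi}$ inside the larger space keeps conditions (i)--(ii) of Definition \ref{ewq1} intact (the content of the first paragraph) and that these conditions are chain-stable so that Zorn's lemma applies. The remaining ingredients --- stability of Cauchyness and cofinality under enlarging the ambient space, and the standard fact that a closed set contains the limits of its convergent nets --- are exactly why the proposition is essentially immediate.
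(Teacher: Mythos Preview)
Your argument is correct and supplies precisely the details the paper omits: in the paper the proposition is simply declared ``evident'' with no proof given, so there is nothing to compare against beyond the expected routine reasoning. Your identification of the one genuinely non-formal step --- using Zorn's lemma to enlarge the $\mathcal{U}_{_Y}$-cut $(\mathcal{A}_{_\xi},\mathcal{B}_{_\xi})$ to a $\mathcal{U}$-cut in $X$ so that $(y_{_a})$ qualifies as $\mathcal{U}$-Cauchy in the ambient space --- is exactly what is needed to make ``evident'' into a proof, and the remaining observations (preservation of Cauchyness, conets and cofinality under passage to the ambient space; closedness forcing the limit into $Y$) are the standard ones.
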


\begin{proposition}\label{epi1}{\rm (see\cite[Theorem 1.7]{sto} Each quasi-uniform space $(X,\mathcal{U})$
can be embedded in a product of quasi-pseudometric spaces.
}
\end{proposition}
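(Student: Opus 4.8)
The plan is to exhibit $\mathcal{U}$ as the supremum of a family of quasi-pseudometric quasi-uniformities and then use the diagonal map into the associated product. First I would fix an entourage $U\in\mathcal{U}$ and, applying axiom~(ii) of a quasi-uniformity repeatedly, build a \emph{normal sequence} $(V^{U}_{n})_{n\geq 0}$ of entourages in $\mathcal{U}$ with $V^{U}_{0}=X\times X$, $V^{U}_{1}=U$, and $V^{U}_{n+1}\circ V^{U}_{n+1}\circ V^{U}_{n+1}\subseteq V^{U}_{n}$ for all $n$ (each $V^{U}_{n+1}$ is obtained by choosing twice in succession an entourage whose square refines the previous one). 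The quasi-pseudometrization lemma — the chaining (Frink-type) argument underlying the metrization result recalled in Section~2, in its quantitative form — then produces a quasi-pseudometric $d_{U}$ on $X$ satisfying
\[
V^{U}_{n}\subseteq\{(x,y):d_{U}(x,y)<2^{-n}\}\subseteq V^{U}_{n-1}\qquad(n\geq 1).
\]

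From these inclusions I would extract two facts. Since $\{(x,y):d_{U}(x,y)<2^{-n}\}\supseteq V^{U}_{n}\in\mathcal{U}$ for every $n\geq 1$, every basic entourage of $\mathcal{U}_{d_{U}}$ belongs to $\mathcal{U}$, so $\mathcal{U}_{d_{U}}\subseteq\mathcal{U}$ and the identity map $(X,\mathcal{U})\to(X,\mathcal{U}_{d_{U}})$ is quasi-uniformly continuous. Since $\{(x,y):d_{U}(x,y)<2^{-2}\}\subseteq V^{U}_{1}=U$, the entourage $U$ belongs to $\mathcal{U}_{d_{U}}$. Letting $U$ range over $\mathcal{U}$ this gives $\mathcal{U}=\bigvee_{U\in\mathcal{U}}\mathcal{U}_{d_{U}}$.

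Next I would form the product quasi-uniform space $(Y,\mathcal{V})=\prod_{U\in\mathcal{U}}(X,\mathcal{U}_{d_{U}})$ and the diagonal map $e:X\to Y$ given by $e(x)_{U}=x$ for every $U$. For each index $U$ the composite $\pi_{U}\circ e$ is the identity $(X,\mathcal{U})\to(X,\mathcal{U}_{d_{U}})$, which is quasi-uniformly continuous by the previous paragraph; as $\mathcal{V}$ is the coarsest quasi-uniformity making all projections quasi-uniformly continuous, $e$ is quasi-uniformly continuous. The map $e$ is injective because $e(x)$ and $e(y)$ are the constant tuples with values $x$ and $y$, so $e(x)=e(y)$ forces $x=y$ (no $T_{0}$ hypothesis is needed). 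Finally, to see that $e$ is a quasi-uniform isomorphism onto $e(X)$ with its subspace quasi-uniformity, I would show $\mathcal{U}$ is initial for $e$: given $U\in\mathcal{U}$, the set $W=\{(\mathbf{x},\mathbf{y})\in Y\times Y:d_{U}(\mathbf{x}_{U},\mathbf{y}_{U})<2^{-2}\}$ is a subbasic entourage of $\mathcal{V}$ with $(e\times e)^{-1}(W)=\{(x,y):d_{U}(x,y)<2^{-2}\}\subseteq U$; since $e$ is injective, $W\cap(e(X)\times e(X))\subseteq(e\times e)(U)$, so $(e\times e)(U)$ lies in the trace of $\mathcal{V}$ on $e(X)$, which is exactly quasi-uniform continuity of $e^{-1}$. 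Together with the already established continuity of $e$, this yields the desired embedding.

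I expect the only genuine difficulty to be the quasi-pseudometrization lemma itself — constructing $d_{U}$ from the normal sequence $(V^{U}_{n})$ and verifying the triangle inequality by the standard chain-length infimum computation; once that classical lemma is in place in its quantitative ``$V_{n}\subseteq\{d<2^{-n}\}\subseteq V_{n-1}$'' form, everything else is routine bookkeeping with the definition of the product quasi-uniformity.
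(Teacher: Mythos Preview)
Your argument is correct and is exactly the classical route: build a normal sequence below each entourage, invoke the Frink-type quasi-pseudometrization lemma to obtain $d_{U}$ with $V^{U}_{n}\subseteq\{d_{U}<2^{-n}\}\subseteq V^{U}_{n-1}$, conclude $\mathcal{U}=\bigvee_{U}\mathcal{U}_{d_{U}}$, and embed diagonally into the product. The paper itself offers no proof of this proposition; it is simply quoted from Stoltenberg \cite[Theorem~1.7]{sto}, whose argument is precisely the one you outline, so there is nothing to compare beyond noting that you have reconstructed the cited proof.
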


\begin{theorem}\label{epi}{\rm Any quasi-unifrom space $(X,\mathcal{U})$ has a $\mathcal{U}$-completion.
}
\end{theorem}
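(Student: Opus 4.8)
The plan is to assemble the $\mathcal{U}$-completion of $(X,\mathcal{U})$ from the quasi-pseudometric case (Theorem~\ref{a19}) via the embedding of Proposition~\ref{epi1}. First I would invoke Proposition~\ref{epi1} to realize $(X,\mathcal{U})$ as a subspace of a product $(\widetilde{X},\widetilde{\mathcal{U}})=(\prod_{i\in I}X_{i},\prod_{i\in I}\mathcal{U}_{i})$, where each $(X_{i},\mathcal{U}_{i})$ is (the quasi-uniformity of) a quasi-pseudometric space $(X_{i},d_{i})$. By Theorem~\ref{a19} each $(X_{i},d_{i})$ has a $\delta$-completion $(\widehat{X_{i}},\widehat{d_{i}})$, and by Proposition~\ref{212} this is the same as saying $(X_{i},\mathcal{U}_{d_{i}})$ embeds (via $\phi_{i}$) into the $\mathcal{U}_{\widehat{d_{i}}}$-complete space $(\widehat{X_{i}},\mathcal{U}_{\widehat{d_{i}}})$. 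Then Proposition~\ref{213} gives that the product $(\prod_{i\in I}\widehat{X_{i}},\prod_{i\in I}\mathcal{U}_{\widehat{d_{i}}})$ is $\widetilde{\mathcal{U}}^{\,c}$-complete for the product quasi-uniformity $\widetilde{\mathcal{U}}^{\,c}$.

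Next I would take the closure $Y$ of the image of $X$ inside this complete product (under the composite embedding $x\mapsto(\phi_{i}(\pi_{i}(x)))_{i\in I}$), with the subspace quasi-uniformity $\mathcal{V}$. By Proposition~\ref{612} (a closed subspace of a $\mathcal{U}$-complete space is $\mathcal{U}$-complete), $(Y,\mathcal{V})$ is $\mathcal{V}$-complete. The embedding $X\hookrightarrow\widetilde{X}\hookrightarrow\prod\widehat{X_{i}}$ is a quasi-uniform isomorphism onto its image, so $(X,\mathcal{U})$ embeds as a dense subspace of the $\mathcal{V}$-complete space $(Y,\mathcal{V})$, which is precisely a $\mathcal{U}$-completion. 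Density of $X$ in $Y$ is automatic since $Y$ was defined as the closure of $X$.

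The step I expect to be the main obstacle is verifying that the construction behaves correctly with respect to the \emph{conet structure}, not merely the topology: the definition of $\mathcal{U}$-completeness (Definition~\ref{a13}) and of $\mathcal{U}$-cut (Definition~\ref{ewq1}) is genuinely asymmetric, involving right $\mathcal{U}_{S}$-Cauchy nets paired with left $\mathcal{U}_{S}$-Cauchy conets, and one must check that this pairing is preserved by passage to products (Proposition~\ref{213} handles it, but the verification that projections send $\mathcal{U}$-cuts to $\mathcal{U}_{i}$-cuts coordinatewise, cf.\ Lemma~\ref{512}) and by passage to the closed subspace $Y$. In particular I would need to confirm that a right $\mathcal{V}_{S}$-Cauchy net in $Y$ that is a $\mathcal{V}$-Cauchy net (member of the first class of a $\mathcal{V}$-cut) is also, when viewed in the ambient complete product, a member of the first class of a product-cut, so that its limit exists in the product and, being a limit of points of $\overline{X}=Y$, actually lies in $Y$; this is where closedness of $Y$ is used. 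Finally I would remark that the isomorphism onto a subspace together with density makes $(Y,\mathcal{V})$ a $\mathcal{U}$-completion in the sense required, completing the proof.
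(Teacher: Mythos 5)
Your proposal follows essentially the same route as the paper's own proof: embed $(X,\mathcal{U})$ into a product of quasi-pseudometric spaces via Proposition~\ref{epi1}, complete each factor by Theorem~\ref{a19}, use Proposition~\ref{213} for completeness of the product, and take the closure of the image of $X$ with the subspace quasi-uniformity, which is complete by the closed-subspace propositions. Your added remarks on preservation of the cut/conet structure under projections (Lemma~\ref{512}) and under passage to the closed subspace only make explicit verifications the paper leaves implicit, so the argument is correct and matches the paper's approach.
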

\begin{proof} By Proposition \ref{epi1}, we have that $(X,\mathcal{U})$ can be embedded in a product of
quasi-pseudometric spaces 
$\displaystyle\prod_{_i\in I}(X_{_i},d_{_i})$. Without loss of generality we may assume that
$d_i(x_i,y_i)\leq 1$ for all $x_i, y_i\in X_i$ and for all $i\in I$.
By Theorem \ref{a19} each space $(X_i,d_i)$ has a $(d_i)_{_S}$-completion $(\widehat{X}_{_i},\widehat{d}_{_i})$.
Therefore, $(X,\mathcal{U})$ can be embedded in 
$\displaystyle\prod_{_i\in I}(\widehat{X}_{_i},\widehat{d}_{_i})$. That is, there exists a quasi-uniformly
continuous mapping $\widehat{\phi}: (X,\mathcal{U})\longrightarrow \displaystyle\prod_{_i\in I}(\widehat{X}_{_i},\widehat{d}_{_i})$ such that $\widehat{\phi}(X)\subseteq \displaystyle\prod_{_i\in I}(\widehat{X}_{_i},\widehat{d}_{_i})$.
It follows from Propositions \ref{412},  \ref{213} and  \ref{612}
that $\overline{\widehat{\phi}(X)}\subseteq \displaystyle\prod_{_i\in I}(\widehat{X}_{_i},\widehat{d}_{_i})$, where $\overline{\widehat{\phi}(X)}$ is the closure of $\widehat{\phi}(X)$ in 
$\displaystyle\prod_{_i\in I}(\widehat{X}_{_i},\widehat{d}_{_i})$. Let 
\begin{equation}
\widehat{\mathcal{U}}=\{U\bigcap \overline{\widehat{\phi}(X)}\times \overline{\widehat{\phi}(X)}
\vert \ U\ {\rm is\ a\ member\ of\ the\ product\ quasi-uniformity\ for}\ \displaystyle\prod_{_i\in I}(\widehat{X}_{_i},\widehat{d}_{_i})\}. 
\label{eq:4515}
\end{equation}
Then, by Propositions \ref{412} and \ref{epi1}
we have that $(\widehat{X},\widehat{\mathcal{U}})$ is a $\widehat{\mathcal{U}}$-completion for
$(X,\mathcal{U})$.
\end{proof}

\section{Discussion} 
In the present paper, we give a new completion for quasi-metric and quasi-uniform spaces which generalizes the completion 
theories
of Doitchinov \cite{doi1} and Stoltenberg \cite{sto}. The main contribution in this paper is the use of the notion of the cuts of nets (sequences)
which generalizes the idea of Doitchinov 
for a completeness theory for quasi-metric and quasi-uniform spaces.
Doitchinov's
completeness theory for quiet spaces (a class of quasi-uniform spaces) is very well behaved and extends the completion theory of uniform spaces in a natural way.
According to Doitchinov \cite{doi1} a
natural definition of a Cauchy net
in a quasi-uniform space $(X,\mathcal{U})$ (similar conditions must be satisfied for a quasi-metric space $(X,d)$)
has to be defined in such a manner that the following
requirements are fulfilled:

(i) Every convergent sequence is a Cauchy net;

(ii) In the uniform case (i.e. when $(X,\mathcal{U})$
 the Cauchy nets are the usual ones.

Further a standard construction of a completion $(\widehat{X},\widehat{\mathcal{U}})$ of any quasi-uniform space $(X,\mathcal{U})$ 
should be possible such that:

(iii) If $(X,\mathcal{U})\subseteq (Y,\mathcal{V})$, 
where the inclusions are understood as quasi-uniform (resp. quasi-metric) embeddings and the second one is an extension of the former;

(iv) In the case when $(X,\mathcal{U})$ is a uniform space $(X^{\ast},\mathcal{U}^{\ast})$
is nothing but the usual uniform completion of 
$(X,\mathcal{U})$.

As it is well known, in metric spaces the notions of completeness by sequences and by nets agree and, further, the completeness of a metric space is equivalent to the completeness of the associated uniform space. 
Therefore, a natural requirement for a satisfactory theory of
completeness in quasi-metric spaces is the following:

(v)  In quasi-metric spaces the sequential completeness and completeness by nets agree.

The following definitions is due to Doitchinov ((see \cite[Condition Q and Definition 11]{doi1})) and 
inspired us to define the notion of cuts of nets.
More precisely: 

\begin{definition}\label{kj1}{\rm A quasi-uniform space $(X,\mathcal{U})$ is called {\it quiet} 
provided that for each 
$U\in\mathcal{U}$
there exists $V\in\mathcal{U}$ such that, if $x^{\prime}, x^{\prime\prime}\in X$
and $(x_a)_{a\in A}$ and $(x_{_\beta})_{_{\beta\in B}}$ are two nets in $X$, then from $(x,x_a)\in V$ for $a\in A$,
$(x_{_\beta},y)\in V$ for $\beta\in B$ and $(x_{_\beta},x_a)\rightarrow 0$ it follows that $(x,y)\in U$.
We say that $V$ is $Q$-{\it subordinated} to $U$.
}
\end{definition}
\begin{definition}\label{ak1}{\rm A net $(x_a)_{a\in A}$ in a quasi-uniform space is $D$-{\it Cauchy} if there exists
another net $(y_{_\beta})_{_{\beta\in B}}$ such that for each $U\in\mathcal{U}$, there exist $a_{_U}\in A$ 
$\beta_{_U}\in B$
satisfying 
$(y_{_\beta}, x_a)\in U$ whenever $a\geq a_{_U}\in A$ and
$\beta\geq\beta_{_U}\in B$. The space $(X,\mathcal{U})$ is $D$-{\it complete} if every $D$-Cauchy net in 
$X$ is convergent.
}
\end{definition}

\begin{definition}\label{kjz1}{\rm Two $D$-Cauchy nets $(x_a)_{a\in A}$ and $(x_{_\beta})_{_{\beta\in B}}$
are called {\it equivazent} if every conet of $(x_a)_{a\in A}$ is a conet of $(x_{_\beta})_{_{\beta\in B}}$ and vice versa.}
\end{definition}

\begin{proposition}\label{kjz}(See \cite[Proposition 12]{doi1}). {\rm If two $D$-Cauchy nets 
in a quiet quasi-uniform space $(X,\mathcal{U})$
have a common conet, then they are equivalent.}
\end{proposition}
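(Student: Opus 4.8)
The plan is to prove the apparently stronger statement that the two nets have exactly the same conets. Write $(x_a)_{a\in A}$ and $(z_\gamma)_{\gamma\in\Gamma}$ for the two $D$-Cauchy nets and let $(w_\delta)_{\delta\in D}$ be a conet of both. Since the hypothesis and the conclusion are symmetric in $(x_a)_{a\in A}$ and $(z_\gamma)_{\gamma\in\Gamma}$, it is enough to show: every conet $(y_\epsilon)_{\epsilon\in E}$ of $(x_a)_{a\in A}$ is a conet of $(z_\gamma)_{\gamma\in\Gamma}$.

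To do this, fix $U\in\mathcal{U}$ and, invoking quietness (Definition \ref{kj1}), pick $V\in\mathcal{U}$ that is $Q$-subordinated to $U$. From the assumption that $(y_\epsilon)_{\epsilon\in E}$ is a conet of $(x_a)_{a\in A}$ I would extract $\epsilon_1\in E$ and $a_1\in A$ with $(y_\epsilon,x_a)\in V$ whenever $\epsilon\geq\epsilon_1$ and $a\geq a_1$; from the assumption that $(w_\delta)_{\delta\in D}$ is a conet of $(z_\gamma)_{\gamma\in\Gamma}$ I would extract $\delta_1\in D$ and $\gamma_1\in\Gamma$ with $(w_\delta,z_\gamma)\in V$ whenever $\delta\geq\delta_1$ and $\gamma\geq\gamma_1$. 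Now for each fixed pair $\epsilon\geq\epsilon_1$, $\gamma\geq\gamma_1$, I apply the quietness condition with the two points taken to be $y_\epsilon$ and $z_\gamma$, the ``net'' taken to be the tail $(x_a)_{a\geq a_1}$ and the ``conet'' taken to be the tail $(w_\delta)_{\delta\geq\delta_1}$. All three hypotheses of Definition \ref{kj1} hold on these tails: $(y_\epsilon,x_a)\in V$ for all $a\geq a_1$, $(w_\delta,z_\gamma)\in V$ for all $\delta\geq\delta_1$, and $(w_\delta,x_a)\to 0$ because $(w_\delta)_{\delta\in D}$ is a conet of $(x_a)_{a\in A}$. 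Quietness then yields $(y_\epsilon,z_\gamma)\in U$. As $\epsilon_1$ and $\gamma_1$ depend only on $U$, this is exactly the statement that $(y_\epsilon)_{\epsilon\in E}$ is a conet of $(z_\gamma)_{\gamma\in\Gamma}$; interchanging the roles of the two nets then gives the reverse inclusion of conets, so the two nets are equivalent in the sense of Definition \ref{kjz1}.

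The step I expect to need the most care is matching the quantifiers in the quietness axiom: it speaks of nets indexed by an entire directed set with $V$ holding at every term, so one must first pass to suitable tails, observe that a tail $\{a\in A:a\geq a_1\}$ of a directed set is again directed and nonempty, and note that the relation ``$(w_\delta,x_a)\to 0$'' required by quietness is nothing but the conet relation restricted to those tails. It is also worth remarking that the argument never uses $D$-Cauchyness of $(x_a)_{a\in A}$ or $(z_\gamma)_{\gamma\in\Gamma}$ directly; the hypothesis enters only through the existence of the common conet $(w_\delta)_{\delta\in D}$ and through quietness of the ambient space.
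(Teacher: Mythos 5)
Your argument is correct and is essentially the standard one: after passing to tails of $(x_a)_{a\in A}$ and of the common conet so that the universal quantifiers in Definition \ref{kj1} are satisfied, applying quietness with the points $y_\epsilon$ and $z_\gamma$ yields $(y_\epsilon,z_\gamma)\in U$ on tails depending only on $U$, which is exactly Doitchinov's proof of the cited Proposition 12 of \cite{doi1}; the present paper states the result by reference and does not reproduce a proof. One small remark: showing that the two nets have exactly the same conets is not a strengthening — by Definition \ref{kjz1} it is precisely what ``equivalent'' means.
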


Ιn the quasi-metric case the notions of $D$-completeness by nets is defined by sequences as follows:
 
 \begin{definition}\label{xso}{\rm A sequence $(x_n)_{n\in\mathbb{N}}$ in the quasi-metric space $(X,d)$ is called $D$-{\it Cauchy sequence} provided that for any natural number $k$ there exist a $(y_{_k})_{_{k\in\mathbb{N}}}$ and an $N_k$ such that $d(y_{_k},x_a)<\displaystyle{1\over k}$ whenever $m,n>N_k$.}
 \end{definition}

The concept of $D$-Cauchy net (sequence) proposed by Doitchinov enables him to realize the program outlined above under the assumption of quietness.

Moreover, $D$-completeness satisfies requirement (v). The following definition shows this fact.

\begin{proposition}\label{thr}{\rm (See \cite[Theorem 9]{doi1}).
In the balanced quasi-metric spaces the notions of $D$-completeness by sequences and by nets agree.}
\end{proposition}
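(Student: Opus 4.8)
The plan is to prove the two implications separately; only one of them needs any hypothesis on $d$. The implication ``$D$-completeness by nets $\Rightarrow$ $D$-completeness by sequences'' is immediate: a sequence is a net over the directed set $\mathbb{N}$ and a cosequence is a conet, so every $D$-Cauchy sequence is a $D$-Cauchy net and hence converges.

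For the converse, assume every $D$-Cauchy sequence in $(X,d)$ converges and let $(x_a)_{a\in A}$ be a $D$-Cauchy net with conet $(y_\beta)_{\beta\in B}$, so that $d(y_\beta,x_a)\to 0$ in the sense of Definition \ref{ak1}; we may assume $A$ and $B$ have no last element, for otherwise the net is eventually constant and there is nothing to prove. First I would squeeze a $D$-Cauchy \emph{sequence} out of this pair. For each $k\in\mathbb{N}$, using the definition of conet with $\varepsilon=1/k$, pick $a'_k\in A$ and $\beta'_k\in B$ with $d(y_\beta,x_a)<1/k$ whenever $a\geq a'_k$ and $\beta\geq\beta'_k$; then, by directedness, choose $a_k\in A$ above $a_{k-1}$ and $a'_k$, and $\beta_k\in B$ above $\beta_{k-1}$ and $\beta'_k$. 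Monotonicity of $(a_k)_k$ and $(\beta_k)_k$ gives $d(y_{\beta_l},x_{a_m})<1/k$ for all $l,m\geq k$, so $(x_{a_k})_{k\in\mathbb{N}}$ is a $D$-Cauchy sequence with cosequence $(y_{\beta_k})_{k\in\mathbb{N}}$, and the same inequality shows that $(y_{\beta_k})_{k\in\mathbb{N}}$ (as well as $(y_\beta)_{\beta\in B}$ itself) is a conet of it. By the sequential hypothesis $(x_{a_k})_{k\in\mathbb{N}}$ converges with respect to $\tau_d$ to some $z\in X$, i.e. $d(z,x_{a_k})\to 0$.

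The last and decisive step is to upgrade this to $x_a\to z$ for the whole net, and this is precisely where balancedness is used: since $d$ is not symmetric and $(x_a)_{a\in A}$ need not be Cauchy in its own right, the triangle inequality alone cannot bound $d(z,x_a)$ from $d(z,x_{a_k})$ and $d(y_{\beta_k},x_a)$. Fix $\varepsilon>0$ and let $\delta>0$ be the radius supplied by the balanced condition (the quasi-metric counterpart of the quietness condition of Definition \ref{kj1}, see \cite{doi1}): if $d(u,u_n)<\delta$ for all $n$, $d(v_m,v)<\delta$ for all $m$, and $d(v_m,u_n)\to 0$, then $d(u,v)\leq\varepsilon$. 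Pick $k$ with $1/k\leq\delta$ and $K$ with $d(z,x_{a_m})<\delta$ for all $m\geq K$, and set $j=\max(k,K)$. For every $a\geq a_k$ the sequences $(x_{a_m})_{m\geq j}$ and $(y_{\beta_m})_{m\geq j}$ satisfy $d(z,x_{a_m})<\delta$ for $m\geq j$, $d(y_{\beta_m},x_a)<1/k\leq\delta$ for $m\geq j$, and $d(y_{\beta_l},x_{a_m})\to 0$; hence the balanced condition gives $d(z,x_a)\leq\varepsilon$. Since $a_k$ depends only on $\varepsilon$, this proves that $(x_a)_{a\in A}$ converges to $z$ for $\tau_d$, so $(X,d)$ is $D$-complete by nets. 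The two points requiring care are the bookkeeping of thresholds, so that the final index $a_k\in A$ is genuinely uniform in $\varepsilon$ (independent of the particular $a$), and the correct matching of the roles of $z$, $x_a$, $(x_{a_m})$, $(y_{\beta_m})$ against the hypotheses of the balanced condition; once those are in place, the balanced inequality itself does the real work of transferring convergence from the extracted sequence to the original net.
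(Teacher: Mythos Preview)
Your proof is correct and follows essentially the same strategy as the paper: extract a $D$-Cauchy subsequence from the net--conet pair, obtain a limit $z$ by sequential $D$-completeness, and then invoke the balanced/quiet property to push convergence from the subsequence back to the whole net. The only difference is packaging: the paper passes to the induced quasi-uniformity $\mathcal{U}_d$, notes that balancedness makes $\mathcal{U}_d$ quiet, uses Doitchinov's Lemma~15 to first get $\lim_\beta d(y_\beta,x)=0$ for the full conet, and then applies the $Q$-subordination condition with the constant net $(x)$; you instead apply the balanced inequality directly in the metric setting with the extracted sequences $(x_{a_m})$ and $(y_{\beta_m})$, which spares you the detour through Lemma~15 and yields slightly cleaner index bookkeeping. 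One cosmetic point: the remark that ``otherwise the net is eventually constant'' is not quite accurate and is in any case unnecessary, since directedness alone lets you build the increasing $(a_k)$ and $(\beta_k)$.
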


\begin{proof}
If a quasi-metric space is $D$-complete, then each $D$-Cauchy net converges in $X$, and thus each $D$-Cauchy sequence converges in $X$. Conversely,
we prove that the sequential $D$-completeness implies that every $D$-Cauchy net in $X$ is convergent. Let $(X,d)$ be a balanced quasi-metric space in which any $D$-Cauchy sequence converges and let $(x_a)_{a\in A}$ be a $D$-Cauchy net in $(X,d)$. Then, $(x_a)_{a\in A}$ is a $D$-Cauchy net in the quasi-uniform space $\mathcal{U}_d$ generated by $d$.
By \cite[Page 208]{doi1}, $\mathcal{U}_d=\{U_{_\epsilon}\vert \epsilon>0\}$ where 
$U_{_\epsilon}=\{(x,y)\in X\times X\vert d(x,y)<\epsilon\}$
is a quiet quasi-uniform space. Since $(x_a)_{a\in A}$ is $D$-Cauchy there exists a net $(y_{_\beta})_{_{\beta\in B}}$ such that for any $U_{_\epsilon}\in \mathcal{U}_d$ there are $a_{_{U_{_\epsilon}}}\in A$ and $\beta_{_{U_{_\epsilon}}}\in B$ such that $(y_{_\beta},x_a)\in U$ 
whenever $a\geq a_{_{U_{_\epsilon}}}$, $\beta\geq \beta_{_{U_{_\epsilon}}}$ or equivalently $\displaystyle\lim_{a,\beta}d(y_{_\beta},x_a)=0$. 
Therefore, for each $n\in \mathbb{N}$, there exists $a_{n}\in A$, $\beta_{_n}\in B$ such that $(y_{_{\beta_{_n}}},x_{a_n})\in U_{_n}=\{(x,y)\in X\times X\vert d(x,y)<{1\over n}\}$.
It follows that $(x_{a_n})_{n\in \mathbb{N}}$ is a $D$-Cauchy sequence and thus it converges to a $x\in X$.
Therefore, $\displaystyle\lim_\beta(y_{_\beta},x)=0$ (\cite[Lemma 15]{doi1}).
Let $\epsilon>0$. Then, since $\mathcal{U}_d$ is quiet there exists $\epsilon^{\prime}>0$ such that $U_{\epsilon^{\prime}}$ is $Q$-subordinated to $U_\epsilon$. Let $n\in\mathbb{N}$ such that ${1\over n}<\epsilon^{\prime}$.
Then, from $(y_{_{\beta_{_n}}},x_a)\in U_{\epsilon^{\prime}}$, $(x,x)\in U_{\epsilon^{\prime}}$ and $\displaystyle\lim_\beta(y_{_\beta},x)=0$ we conclude that $(x,x_a)\in U_\epsilon$. It follows that $(x_a)_{a\in A}$ converges to $x$.
\end{proof}

By Proposition \ref{kjz}, to each $D$-Cauchy net $(x_a)_{a\in A}$, it corresponds a set of pairs of nets-conets which lead to the notion of cut of nets, that is, a pair $(\mathcal{C},\mathcal{D})$ where $\mathcal{C}$ contains all equivalent nets of $(x_a)_{a\in A}$ and $\mathcal{D}$ contains all of their conets.
The notion of $\mathcal{U}$-cut defined in this paper is a cut of nets $(\mathcal{A},\mathcal{B})$ where the members of $\mathcal{A}$ contains right $\mathcal{U}_{_S}$-Cauchy nets and $\mathcal{B}$ contains left $\mathcal{U}_{_S}$-Cauchy nets as they are defined by Stoltenberg.

At this point it is routine to check that all the requirements (i)-(iv) are satisfied for the proposed $\mathcal{U}$-completion.
The requirement (v) is also satisfied as shows Proposition \ref{pop}.

The validity of requirement (v) usually does not hold for some completion theories.
As remarked Stoltenberg \cite[Example 2.4]{sto}, there exists a sequentially right $K$-complete quasi-metric space $(X,d)$ which is not right $\mathcal{U}_{_S}$-complete. Actually, Stoltenberg 
\cite[Theorem 2.5]{sto} proved that the equivalence holds for a more general definition of a right $\mathcal{U}_{_S}$-Cauchy net. 
More precisely, Stoltenberg \cite{sto}  gives
a more general definition of a right $\mathcal{U}_{_S}$-Cauchy
net than that we use in this paper
as follows: 
A net $(x_{_a})_{_{a\in A}}$ in a quasi-unifrom space $(X,\mathcal{U})$ is called {\it right} (resp. {\it left}) $\mathcal{U}_{_{\mathcal{S}}}$-{\it Cauchy}
if for each $U\in\mathcal{U}$ there is $a_{_U}\in A$ such that $(x_{_\beta},x_{_\alpha})\in U$ (resp. $(x_{_\alpha},x_{_\beta})\in U$) whenever
$\alpha, \beta \in A$ and $a\geq \beta\geq  a_{_U}$.
However, using this definition one can find a quasi-metric space $(X,d)$ which is left $K$-sequentially complete but not $\mathcal{U}_{_S}$-complete.
He support this claim by offering the following example \cite[Example 2.4]{sto}:
Let $\mathcal{A}$ be the family of all countable subsets of the closed interval 
$[0,{1\over 3}]$ and let for every $A\in\mathcal{A}$ and $k\in\mathbb{N}$,
$X^{^A}_{_{k+1}}=A\cup \{{1\over 2},{3\over 4},...,{{2^{^k}-1}\over {2^{^k}}}\}$ and $X^{^A}_{_{\infty}}=\displaystyle\bigcup_{k\geq 1}X^{^A}_{_{k}}$.

Define 

\begin{center}
$\mathcal{X}_{_A}=\{X^{^A}_{_{k}}\vert k=1,2,...,\infty\}$ and 
$\mathfrak{J}=\{\bigcup\{\mathcal{X}_{_A}\vert A\in \mathcal{A}\}$.
\end{center}

Define $d:\mathfrak{J}\times \mathfrak{J}\rightarrow \mathbb{R}$ by
\begin{center} $d(X^{^A}_{_{k}},X^{^B}_{_{j}})={1\over {2^{^j}}}$ if $X^{^B}_{_{j}}\subset X^{^A}_{_{k}}$, $k=1,2,...,\infty$, $j=1,2,...,$ \\ .
\\ $d(X^{^A}_{_{k}},X^{^A}_{_{k}})=0$\ 
and\ $d(X^{^A}_{_{k}},X^{^B}_{_{j}})=0$ otherwise.
\end{center}
Stoltenberg proves that $(\mathfrak{J},d)$ is left $K$-sequentially complete and not $d_{_S}$-complete.
To overcome the weaknesses of 
this definition, in order to develop his theory of $\mathcal{U}_{_S}$-completeness, he uses Definition 
\ref{a0}. 
In order to prove his main result of the construction of $\mathcal{U}_{_S}$-completion 
he uses Theorem 2.5 which says that: 
A quasi-metric space $(X,d)$ is $d_{_S}$-complete if and only if
every left $K$-Cauchy sequence in $(X,d)$ converges with respect to $\tau_{_d}$ in $X$.
On the other hand,
the notion of $d_{_S}$-completeness
plays a central role in the constructed $\mathcal{U}_{_S}$-completion,
since this completion is a subset of
$\displaystyle\prod_{_i\in I}(\widehat{X}_{_i},\widehat{d}_{_i})$
($(\widehat{X}_{_i},\widehat{d}_{_i})$ is the $(d_i)_{_S}$-completion of the $T_{_0}$ quasi-pseudometric space
$(X_i,d_i)$). 
Gregori and Ferrer \cite{GF}, using Example 2.4 of \cite{sto} as a counter-example,
showed
that Stoltenberg's result of Theorem 2.5, based on his $\mathcal{U}_{_S}$-Cauchy net (\cite[Definition 2.1]{sto}), is not valid in general. 
In fact, the authors define a net $\Phi$ in $(\mathfrak{J},d)$ as follows:
Let $D=\mathbb{N}\cup \{a,b\}$, where $\mathbb{N}$
 is the set of natural with the usual order and 
$a,b\notin \mathbb{N}$, $a\neq b$ and $a\geq k$ and $b\geq k$ for $k\in\mathbb{N}$, 
$a\geq b$,
$b\geq a$, $a\geq a$ and $b\geq b$. Cleraly, $D$ is a directed set. Then,
\begin{center}
$\Phi(k)=X^{^A}_{_k}$ for $k\in\mathbb{N}$, $\Phi(a)=X^{^A}_{_\infty}$, $\Phi(a)=X^{^B}_{_\infty}$, where $A, B\in \mathcal{A}$ and $A\subset B$.
\end{center}
Then, $\Phi$ is a right $d_{_S}$-Cauchy net. Indeed, let $0<\varepsilon<1$ and $\lambda_{_0}\in\mathbb{N}$ be such that 
$\displaystyle{1\over {2^{\lambda_{_0}}}}<\varepsilon$. We have $k\leq a$, $k\leq b$, $k\geq k_{_0}$
$\lambda_{_0}\leq a\leq b$ and $\lambda_{_0}\leq b\leq a$. The condition $\lambda\ngeq k$ can hold for some $k, \lambda\in I$,
$k, \lambda\geq k_{_0}$ in the following cases:
\par
($\mathfrak{a}$) $k, \lambda\in\mathbb{N}$, $k, \lambda\geq k_{_0}$, $k>\lambda$.
\par
($\mathfrak{b}$) $k=a$, $\lambda\in \mathbb{N}$, $k, \lambda\geq k_{_0}$.
\par
($\mathfrak{c}$) $k=a$, $\lambda\in \mathbb{N}$, $k, \lambda\geq k_{_0}$.

In the case ($\mathfrak{a}$), we have that $X^{^A}_{_\lambda}\subset X^{^A}_k$ and thus
\begin{center}
$d(\Phi(k),\Phi(\lambda))=d(x^A_k,x^A_{_\lambda})=\displaystyle{1\over {2^{^\lambda}}}<\displaystyle{1\over {2^{^{k_{_0}}}}}<\varepsilon$.
\end{center}

In the case ($\mathfrak{b}$), we have that  we have that $X^{^A}_{_\lambda}\subset X^{^A}_\infty$ and thus
\begin{center}
$d(\Phi(a),\Phi(\lambda))=d(x^A_\infty, x^A_{_\lambda})=\displaystyle{1\over {2^{^\lambda}}}<\displaystyle{1\over {2^{^{k_{_0}}}}}<\varepsilon$.
\end{center}

The case ($\mathfrak{c}$) is similar to ($\mathfrak{c}$).

On the other hand, $\Phi$ is not convergent in $(\mathfrak{J},d)$. Indeed, let $X\in \mathfrak{J}\setminus X^{^B}_{\infty}$.
Then, for each $k\in I$, $b\geq k$ holds and thus 
\begin{center}
$d(X,\varphi(b))=d(X,X^{^B}_\infty)=1$.
\end{center}
It concludes that for each $\varepsilon>0$, $X^{^B}_\infty\notin B(X,\varepsilon)$ holds. Therefore, for any $\varepsilon>0$
no final segment of $\Phi$ is contained in $B(X,\varepsilon)$ which implies that $\Phi$ does not converge to $X$.
Similarly, if $X=X^{^B}_\infty$, then $a\geq k$ for each $k\in I$ and $d(X^{^B}_\infty,X^{^A}_\infty)=1$.

Ηence the problem that arises here is that
the net $\Phi$, although non-convergent,
is a $\mathcal{U}_{_S}$-Cauchy net.
At this point we will look at Doitchinov's view on this problem using another definition of cauchyness he used. More precisely:
\begin{definition}\label{ep}(See \cite[Definition, Page 129]{doi}).
{\rm Let $(X,d)$ be a quasi-pseudometric space.
A sequence
$(x_n)_{n\in\mathbb{N}}$ is called {\it Cauchy sequence} if for every
natural number $k$ there are a $y_{_k}\in X$ and an
$N_k\in\mathbb{N}$ such that $d(y_k,x_n)<\displaystyle{1\over k}$ when $n>N_{_k}$.
}
\end{definition}
According to Doitchinov a non-formal objection to this definition of Cauchyness is illustrated by the following example.
\begin{example}{\rm (Sorgenfrey line). Let $\mathbb{R}$ be the real line equipped with the quasi-metric
\par
\begin{center}

$d(x,y)=\left\{
\begin{array}{cc}
y-x & \mbox{if $x\leq y$,} \\
1   & \mbox{if $x>y$.}%
\end{array}%
\right. $

\end{center}
\smallskip
\par\noindent

}
\end{example}

For each $x \in X$, the collection $\{[x,x+r) \vert r> 0\}$ form a local base at the point $x$ for the topology generated by $d$ in $X$.

According to Doitchinov \cite[Page 130]{doi}:
{\it The sequence $(-{1\over n})_{n\in \mathbb{N}}$, although nonconvergent,
is a Cauchy sequence in the sense of Definition \ref{ep}. However, in view of the special character of the topology 
on the space (R,d), it seems very inconvenient to regard this sequence as a potentially convergent one, i.e. as one that could be made convergent by completing the space.}

As we describe above,
in order to avoid this unwanted phenomenon, Doitchinov has been introduced the $D$-completeness (see Definition \ref{ak1}).

On the other hand,
Gregori and Ferrer \cite{GF} also proposed a new definition of a right $\mathcal{U}_{_S}$-Cauchy net, 
for which the equivalence to sequential completeness holds (requirement (v)).

\begin{definition}\label{i1}{\rm A net $(x_a)_{a\in A}$ in a quasi-metric space $(X,d)$ is called $GF$-Cauchy if one of the following conditions holds:
\par
(i) For every maximal element $a^{\ast}\in A$ the net $(x_a)_{a\in A}$ converges to $x_{a^{\ast}}$; 
\par
(ii) $A$ has no maximal elements and $(x_a)_{a\in A}$ converges in $X$;
\par
(iii) $A$ has no maximal elements and $(x_a)_{a\in A}$ satisfies Definition 2.1 of \cite{sto}.
}
\end{definition}

More recently Cobzas \cite{cob}
has contributed
new results in Stoltenberg's completion theory. 
In order to avoid the shortcomings of the preorder relation, as, for instance, those put in evidence by 
Example of Gregori and Ferrer, he proposes a new definition of right $K$-Cauchy net in a quasi-metric 
space for which the corresponding 
completeness is equivalent to the sequential completeness.

\begin{definition}{\rm A net $(x_a)_{a\in A}$ in a quasi-metric space $(X, d)$ is called {\it strongly Stoltenberg-Cauchy} if for every
$\epsilon>0$
there exists $a_\epsilon\in A$ such that, for all $a, \beta\geq a_\epsilon$ ($\beta\leq a\vee a\nsim b$ implies that
$d(x_a,x_{_\beta})<\epsilon$ where $a\vee a\nsim b$ means that $a,\beta$ are incomparable (that is, 
no one of the relations $a\leq \beta$ or $\beta\leq a$ holds).
}
\end{definition}

It is interesting to see how $\mathcal{U}$-Competion will become if we change the members of the classes
of a $\mathcal{U}$-cuts with Gregori and Ferrer Cauchy nets or Cobzas Cauchy nets respectively.

\end{document}